\documentclass[preprint,12pt]{elsarticle}

\usepackage{amssymb, amsmath, amsthm}
\usepackage{graphicx}
\usepackage{float, mathtools, subfig, tikz}
\usepackage[colorlinks=true, allcolors=blue]{hyperref}
\usepackage[capitalise, nameinlink]{cleveref}
\newtheorem{thm}{Theorem}[section]

\newtheorem{lem}[thm]{Lemma}
\newtheorem{prop}[thm]{Proposition}

\theoremstyle{definition}
\newtheorem{defn}[thm]{Definition}

\theoremstyle{remark}
\newtheorem{rem}[thm]{Remark}

\crefname{thm}{Theorem}{Theorems}
\crefname{cor}{Corollary}{Corollaries}
\crefname{lem}{Lemma}{Lemmas}
\crefname{prop}{Proposition}{Propositions}
\crefname{defn}{Definition}{Definitions}
\crefname{rem}{Remark}{Remarks}

\newcommand{\R}{\mathbb{R}}
\newcommand{\C}{\mathbb{C}}
\newcommand{\Z}{\mathbb{Z}}
\newcommand{\N}{\mathbb{N}}

\renewcommand{\P}{\mathbb{P}}

\newcommand{\ba}{\ensuremath{\mathbf{a}}}
\newcommand{\BB}{\ensuremath{\mathbf{B}}}
\newcommand{\CC}{\ensuremath{\mathbf{C}}}
\newcommand{\kk}{\ensuremath{\mathbf{k}}}
\newcommand{\nn}{\ensuremath{\mathbf{n}}}
\newcommand{\uu}{\ensuremath{\mathbf{u}}}
\newcommand{\vv}{\ensuremath{\mathbf{v}}}
\newcommand{\xx}{\ensuremath{\mathbf{x}}}
\newcommand{\yy}{\ensuremath{\mathbf{y}}}
\newcommand{\ttheta}{\boldsymbol{\theta}}
\newcommand{\aalpha}{\boldsymbol{\alpha}}
\newcommand{\un}{\boldsymbol{1}}

\journal{European Journal of Combinatorics}

\begin{document}

\begin{frontmatter}

\title{The asymptotic number of zonotopes}

%% Affiliations reproduced from the original source:
%% O. Bodini -> LAGA ; T. Buffière -> LIPN and LAGA (please check).
\author[laga]{Olivier Bodini\corref{cor1}}
\ead{bodini@math.univ-paris13.fr}

\author[lipn,laga]{Théophile Buffière}
\ead{buffiere@math.univ-paris13.fr}
\cortext[cor1]{Corresponding author.}

\affiliation[lipn]{organization={LIPN, Université Sorbonne Paris Nord (P13)},
            postcode={93430},
            city={Villetaneuse},
            country={France}}

\affiliation[laga]{organization={LAGA, Université Sorbonne Paris Nord (P13)},
            postcode={93430},
            city={Villetaneuse},
            country={France}}

\begin{abstract}
We provide a sharp estimate for the asymptotic number of lattice zonotopes inscribed in $[0,n]^d$ as $n$ tends to infinity. Our estimate refines the logarithmic equivalent established by B\'ar\'any, Bureaux, and Lund when the sum of the generators of the zonotope is prescribed. As we shall see, the exponential part of our estimate is a polynomial of degree $d$ in $n^{1/(d+1)}$, plus a term involving the non-trivial zeros of the Riemann zeta function.
Our analysis is based on a correspondence between power sums over primitive vectors (vectors with coprime coordinates) and Eulerian polynomials.
We also analyse some combinatorial properties of lattice zonotopes. In particular, we compute the asymptotic mean and variance of the diameter of the graph of a random lattice zonotope inscribed in $[0,n]^d$ as $n$ goes to infinity, and show that this diameter satisfies a central limit theorem.
\end{abstract}

\begin{keyword}
zonotopes \sep lattice polytopes \sep asymptotic enumeration \sep saddle-point method \sep Mellin transform \sep Eulerian polynomials \sep Riemann zeta function \sep graph diameter
\MSC[2020] 05A16 \sep 52B20 \sep 11P82 \sep 11M06
\end{keyword}

\end{frontmatter}

%%%%%%%%%%%%%%%%%%%%%%%%%%%%%%%%%%%%%%%%%%%%%%%%%%%

\section{Introduction}\label{introduction}

Enumerating convex lattice polytopes is a long-standing question. In 1980, Arnold bounded the number of convex lattice polygons of area $A$, up to the affine transformations of $\Z^2$, showing that it behaves roughly like $A^{1/3}$ \cite{Arnold:polygons}. In 1992, B\'ar\'any and Pach narrowed those bounds \cite{Barany:numberpolygons}, before B\'ar\'any and Vershik extended the result to higher dimensions \cite{Barany:numberpolytope}. These bounds had in fact been partially improved by Konyagin and Sevast'yanov as early as 1984 \cite{Konyakin:bound_polyhedron}.
Vershik later asked for the number of lattice polytopes in a box \cite{Vershik:limit}; for instance, in dimension 2, the number of convex lattice polygons inscribed in the square $[-n, n]^2$.

A natural intermediate step is to enumerate the number $p(n)$ of convex polygonal lines from $(0,0)$ to $(n,n)$ on the lattice $\Z^2$ (referred to as lattice chains in the sequel). An asymptotic equivalent of its logarithm was found independently by B\'ar\'any \cite{Barany:limitshape}, Vershik \cite{Vershik:limit}, and Sinai \cite{Sinai:polygonallines}, who showed that
\begin{align}\label{kappa}
    p(n) = \exp \left(3 \kappa^{1/3} n^{2/3} (1 + o(1))\right), \qquad \text{where} \quad \kappa = \frac{\zeta(3)}{\zeta(2)} .
\end{align}

More recently, Bodini, Duchon, Jacquot, and Mutafchiev \cite{Bodini:polyomino} and Bureaux and Enriquez \cite{Bureaux:polygons} refined this result by computing an asymptotic equivalent of the number of convex lattice chains. The former used analytic combinatorics tools similar to those of the present paper, starting from the study of convex polyominoes. The latter started from Sinai's model and used the local limit theorem of Bogachev and Zarbaliev \cite{Bogatchev:limit}. Both obtained
$$p(n) \sim \frac{e^{- 2 \zeta'(-1)}}{(2\pi)^{7/6} \sqrt{3}\, \kappa^{1/18} n^{17/18}} \exp \left( 3 \kappa^{1/3} n^{2/3} + I\left(\left(\frac{\kappa}{n}\right)^{1/3}\right) \right), $$
where $I$ is a sum over the non-trivial zeros of the Riemann zeta function; under the Riemann hypothesis, $I\left(\left(\kappa/n\right)^{1/3}\right) = O(n^{1/6})$.

However, computing the number of lattice polytopes remains an open question, since going from convex lattice chains to convex polygons requires a delicate analysis of how the chains are connected.

This paper deals with the enumeration of zonotopes in $\R^d$, for any dimension $d$.
A \emph{zonotope} is a convex body defined as the Minkowski sum of $k$ segments, called its \emph{generators}. It is in one-to-one correspondence with the convex polygonal line made up of those generators. More precisely, a \emph{lattice zonotope} $Z$ (or integral zonotope) is a polytope for which there exist $k \in \N$ and $\vv_1, \dots, \vv_k \in \Z^d$ such that, up to translation,
$$ Z = \left\{ \sum_{i=1}^k \alpha_i \vv_i  \;\middle|\; \alpha_1,\dots,\alpha_k \in [0,1]\right\}. $$

Being Minkowski sums of segments, zonotopes are as much combinatorial objects as geometric ones, which makes the question of their enumeration tractable. They are also important objects in their own right: they are used for solving systems of polynomial equations \cite{Huber:zonotopes} and as bounding volumes in spatial representations \cite{Guibas:zonotopes}. From a theoretical perspective, they appear in the theory of the simplex method \cite{Delpia:simplex} and in the study of polytope diameters. Evaluating their number is one more step towards understanding these objects and their relation to convex polytopes.

In 2018, B\'ar\'any, Bureaux, and Lund \cite[Theorem 1.2]{Barany:zonotopes} computed the logarithmic asymptotics of the number of lattice zonotopes contained in a cone $C$ and ending at a prescribed point $n\kk$. In the case of the zonotopes generated by vectors of $\Z_+^d$ and ending at $n\un$, this gives
$$ z'_d(\R_+^d, n \un) \underset{n \rightarrow + \infty}{ = } \exp \left( (d+1) \left(\frac{\zeta(d+1)}{\zeta(d)}\right)^{\frac{1}{d+1}}  n^{\frac{d}{d+1}} (1 + o(1)) \right).$$

The present paper refines the results of \cite{Barany:zonotopes} and \cite{Bureaux:polygons}. Throughout the paper, we set
$$\kappa_d = \frac{2^{d-1}\zeta(d+1)}{\zeta(d)}.$$
Our main result is the following.

\begin{thm}\label{theorem_1}
Let $d\in \N$, $d \geq 2$, and denote by $z_d(n)$ the number of lattice zonotopes inscribed in $[0,n]^d$ (see Section~\ref{section_combi}). There exist real numbers $\alpha_d$, $\beta_d$, $\gamma_d$, a polynomial $\widehat{Q}_d$ of degree $d$ without constant term, and a quantity $I_{\text{crit}}(d, n)$ depending on the non-trivial zeros of the Riemann zeta function (all of which are explicit, or explicitly computable, see Theorem~\ref{theorem_1_complet}), such that, as $n$ grows large,
\begin{align}
     z_d(n) \sim  \alpha_d e^{\gamma_d} n ^{\beta_d} \exp \left( \widehat{Q}_d(n^\frac{1}{d+1}) + I_{\text{crit}}(d, n) \right).
\end{align}
\end{thm}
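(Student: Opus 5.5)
We encode lattice zonotopes inscribed in $[0,n]^d$ by their generators. Up to translation and the choice of sign of each generator, a lattice zonotope is the same as a finite multiset of primitive vectors taken up to sign, with multiplicities. The bounding box of $Z$ has side $\sum_i |v_{i,j}|$ in coordinate $j$. So $z_d(n)$ is the coefficient of $x_1^n\cdots x_d^n$ in
\[
F(\xx)=\prod_{\pm \vv \text{ primitive}} \frac{1}{1-\xx^{|\vv|}},
\]
where $|\vv|$ denotes the vector of absolute values. A primitive vector with all coordinates nonzero gives $2^{d-1}$ classes $\pm\vv$ with the same $|\vv|$, which explains the factor $2^{d-1}$ in $\kappa_d$. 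Primitive vectors with some zero coordinates contribute lower-dimensional factors of the same shape. I would first make this correspondence precise in Section~\ref{section_combi}, including the degenerate (non-full-dimensional) cases, which contribute only lower-order terms. Then
\[
z_d(n)=\frac{1}{(2\pi i)^d}\oint F(\xx)\,\xx^{-n\un-\un}\,d\xx,
\]
and we apply a multivariate saddle-point method with $x_j=e^{-t_j}$.

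\textbf{Step 1: Mellin asymptotics of $\log F$.} Write
\[
\log F(e^{-\ttheta})=\sum_{\vv}\sum_{m\ge1}\frac{e^{-m\,\ttheta\cdot|\vv|}}{m}.
\]
By symmetry, the saddle point lies on the diagonal $\ttheta=t\un$, so we need $\log F$ and its first two derivatives near the diagonal. On the diagonal the Mellin transform is $\Gamma(s)\zeta(s+1)\sum_{\vv}|\vv|_1^{-s}$. Summing over all nonnegative vectors and Möbius-inverting shows that the primitive sum equals $D(s)/\zeta(s)$. Here $D(s)=\sum_k \binom{k+d-1}{d-1}k^{-s}$ is a combination of $\zeta(s-j)$ for $0\le j\le d-1$. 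For derivatives in the transverse directions we need the power sums $\sum_{|\vv|_1=k}v_j^{a}v_l^{b}$. This is where the correspondence with Eulerian polynomials enters: these sums are polynomials in $k$ with explicit coefficients. The poles at $s=d,d-1,\dots,1$ then give a polynomial in $1/t$. The double pole at $s=0$ gives a $\log t$ term together with constants involving $\zeta'(0)$ and $\zeta'(-j)$; these produce $\gamma_d$ and part of $\beta_d$. The zeros $\rho$ of $\zeta(s)$ give the oscillating contributions $\sum_\rho c_\rho t^{-\rho}$.

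\textbf{Step 2: Saddle point and Gaussian estimate.} We solve $n=-\partial_{t}\log F$ for $t=t_n$. To leading order this gives $t_n\sim(\kappa_d/n)^{1/(d+1)}$, and the correction terms are computed from Step 1. Substituting into $nd\,t_n+\log F(t_n\un)$ and re-expanding yields a polynomial $\widehat Q_d(n^{1/(d+1)})$ of degree $d$, with all lower poles retained, plus the zeta-zero part $I_{\text{crit}}(d,n)$ and a constant. Only finitely many correction terms matter, since each costs a factor $n^{-1/(d+1)}$. The Hessian of $\log F$ at the saddle is a rank-one-perturbed multiple of the identity, computed from the same power sums, and its determinant gives $n^{\beta_d}$ and the constant $\alpha_d$.

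\textbf{Step 3: Tails.} We must show that the integral over $\|\ttheta-t_n\un\|$ outside a shrinking neighbourhood is negligible. Near the saddle we use a Taylor expansion with third-derivative bounds. Far away we bound
\[
\mathrm{Re}\log F(e^{-t_n\un-i\phi})-\log F(e^{-t_n\un}) \le -c\,t_n^{-d}\,\min(1,\|\phi\|^2/t_n^2),
\]
from the contribution of small primitive vectors, in the style of Bureaux--Enriquez.

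I expect the main obstacle to be the treatment of $I_{\text{crit}}$. Shifting the Mellin contour to the left of $\mathrm{Re}\,s=0$ is impossible because of the zeros of $\zeta$. Instead we must stop at a contour that hugs a zero-free region, and show that the zeros with $\mathrm{Re}\,\rho$ close to $1$, which give terms of size $t^{-\mathrm{Re}\rho}$ possibly larger than constants, are kept explicitly in $I_{\text{crit}}$. Their effect on the saddle location must be shown to produce only $o(1)$ corrections beyond the terms kept. Here I would follow the treatment of the $d=2$ chain case in \cite{Bodini:polyomino}, using growth bounds of $1/\zeta$ on suitable horizontal segments.
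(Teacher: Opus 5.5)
Your architecture matches the paper's: a product over primitive classes, a Mellin transform carrying $1/\zeta(s)$, residues at the real poles and at the double pole $s=0$, the zeros kept as a contour integral, the exact diagonal saddle point, and a Gaussian approximation. Doing the tails by hand instead of checking Gittenberger--Mandlburger's conditions (I)--(V), as the paper does, is a legitimate variant. Your far-range bound $-c\,t_n^{-d}$ is more than you need: for $\|\phi\|\gtrsim\theta$ the paper only gets $-c/\theta$, from the vectors $(i,1,0,\dots,0)$. Any super-polynomial decay already beats $\sqrt{\det\BB}\asymp\theta^{-d(d+2)/2}$. There are, however, two concrete problems.

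\textbf{The Dirichlet series.} Your Dirichlet series is wrong. Counting classes $\pm\vv$ in $\Z^d$ with $L^1$-norm $k$ is not counting $\Z_+^d$: each $\uu\in\P_{d+}$ carries the weight $2^{d(\uu)-1}$. The correct numerator is therefore $\sum_k P_d(k)k^{-s}=\mathbf{\Pi}_d(\zeta)(s)$ with $P_d(k)=\sum_{\delta=1}^d\binom{d}{\delta}2^{\delta-1}\binom{k-1}{\delta-1}$, not $\sum_k\binom{k+d-1}{d-1}k^{-s}$.
\begin{itemize}
\item Your $D$ has leading coefficient $1/(d-1)!$ instead of $2^{d-1}/(d-1)!$. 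It therefore produces the positive-orthant constant $\zeta(d+1)/\zeta(d)$, not the $\kappa_d$ you then use.
\item Every lower coefficient feeding $\widehat Q_d$, $\alpha_d$, $\beta_d$ is also off. It loses the parity of $P_d$, which is why the paper's saddle expansion runs in powers of $\tilde\theta_n^2$.
\item $s=1$ is not a pole, since $p_{d,0}\zeta(s)$ cancels against $1/\zeta(s)$. Poles at $s=\delta+1$ occur only when $p_{d,\delta}\neq0$.
\item Power sums over primitive vectors are not polynomials in $k$; only their Dirichlet series factor, as in Proposition~\ref{proposition_eulerian}. You do not need them for the Hessian anyway: only the leading order of $\BB$ enters, and the residue at $s=d$ gives it.
\end{itemize}

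\textbf{The contour shift and the deferred step.} Your ``main obstacle'' paragraph misplaces the difficulty.

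Shifting left of $\Re s=0$ is not impossible. The paper moves to $\Re s=-\frac12$ along rectangles with Valiron's heights $T_k$; that is what the horizontal bounds on $1/\zeta$ are for. This yields the double-pole residue your Step~1 needs, an $O(\theta^{1/2})$ remainder, and the zeros isolated as $I_{\text{crit},d}$, the integral over a two-sided contour hugging both edges of the critical strip.

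What then controls the zeros is a derivative bound combined with convexity. Write $\ell_d=L_d+R_d$ and $f=L_d+dn\theta$, with $f''\asymp\theta^{-(d+2)}$. Since $\theta^*_n$ minimizes $f+R_d$, the exponent $\ell_d(\theta^*_n)+dn\theta^*_n$ differs from $\min f+R_d(\hat\theta_n)$ by $O(\sup|R_d'|^2/f'')$. So you need $R_d'(\theta)=o(\theta^{-2})$.
\begin{itemize}
\item The non-oscillating parts of $R_d$ have derivatives $O(\theta^{-1})$.
\item On the contour, the trivial bound $|\theta^{-s-1}|\le\theta^{-2}$ gives only $O(\theta^{-2})$, hence an $O(1)$ error when $d=2$.
\item The little-$o$ comes from dominated convergence, using $|\theta^{-s-1}|\le\theta^{-2}\,\theta^{A/\log(2+|\Im s|)}$ on the contour, which exploits the zero-free margin.
\end{itemize}
The same bound, together with $\hat\theta_n-\tilde\theta_n=O(\tilde\theta_n^3)$, lets you move $I_{\text{crit}}$ to $\tilde\theta_n=(\kappa_d/n)^{1/(d+1)}$, where the theorem evaluates it (Lemma~\ref{lemma_exponent}).
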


We establish this asymptotic estimate in every dimension, highlighting in the process a general correspondence between sums over coprime numbers and Eulerian polynomials. Our work essentially deals with generating functions, which are ubiquitous in analytic combinatorics (see for instance Flajolet and Sedgewick \cite{Flajolet:AC}). This result calls for two remarks.

\begin{rem}
Unexpectedly, the exponential part is in general not a monomial, as it is in dimensions 2 and 3 (see below), but a polynomial in the $(d+1)$-th root of the size of the box.
\end{rem}

\begin{rem}
As already observed in dimension 2, the estimate contains a term depending on the non-trivial zeros of the $\zeta$ function. In Section~\ref{section_numerical_discu}, we discuss the value of this term, and extend the observation of \cite{Bodini:polyomino, Bureaux:polygons} that $I_{\text{crit}}$ is negligible in dimension 2 for any ``computably'' large $n$ (typically $n < 10^{20}$).
\end{rem}

To illustrate this theorem, we give the following table.

\begin{table}[H]
    \centering
    \small
    \setlength{\tabcolsep}{4pt}
    \begin{tabular}{|c|c|c|c|}
        \hline
         & $\alpha_d e^{\gamma_d}$ & $\beta_d$ & $\widehat{Q}_d(X)$\\ [0.5ex] 
        \hline
        $d=2$ & $\frac{2^{1/9}  3^{13/18}  \zeta(3)^{2/9} e^{- 4 \zeta'(-1)} }{6 \pi^{16/9}}$  &  $- \frac{11}{9}$ & $\frac{2^{2/3}   3^{4/3}  \zeta(3)^{1/3} }{ \pi^{2/3}  }X^2 $  \\ [0.5ex] 
        \hline
        $d=3$ &  $\frac{2^{5/8}  3^{3/4}  5^{7/8} e^{- 4 \zeta'(-2)} }{120\, \zeta(3)^{1/8} \pi}$ & $- \frac{13}{8}$ & $ \frac{2^{9/4}  \pi}{3^{1/2} 5^{1/4} \zeta(3)^{1/4}  }X^3$\\ [0.7ex] 
        \hline
        $d=4$ & $\left(\frac{3^{284} \zeta(5)^{142} }{2^{522}\, 5^{83}\,  \pi^{1658}  } \right)^{\frac{1}{450}}  e^{- \frac{8}{3}\zeta'(-3) -\frac{16}{3} \zeta'(-1) - \frac{8\zeta(3)^2}{225\zeta(5)}}$ & $- \frac{521}{225}$ & $\frac{5 \cdot 720^{1/5} \zeta(5)^{1/5} }{\pi^{4/5}  } X^4 +  \frac{16\, \zeta(3) }{720^{2/5}\, \zeta(5)^{2/5}\, \pi^{2/5}  }X^2$\\
        \hline
    \end{tabular}
    \caption{Parameters of the asymptotic equivalent of the number of zonotopes inscribed in a hypercube of dimension 2, 3, and 4.}
    \label{tab:number_of_zonotopes}
\end{table}

The quantities $I_{\text{crit}}(2, n)$, $I_{\text{crit}}(3, n)$, and $I_{\text{crit}}(4, n)$ are numerically approximated in (\ref{numeric_I_crit}).

\subsection{Moments of the diameter of lattice zonotopes}
Once the combinatorial description of zonotopes and Theorem~\ref{theorem_1} have been established, one can compute moments of combinatorial parameters of zonotopes. In this paper, we are interested in two parameters: the diameter of the graph of a zonotope and the number of occurrences of a given generator in a random zonotope. \medskip

The diameter (of the graph) of a polytope of a given size is a key combinatorial parameter about which little is known. Besides its own scientific interest (see for instance the Hirsch conjecture), it is connected to the complexity of the simplex algorithm, which makes upper bounds on this quantity highly sought after (see \cite{Delpia:simplex} for more references). In this perspective, the largest possible diameter of a lattice zonotope contained in $[0,n]^d$ is a natural quantity to consider. \medskip

We compute asymptotic estimates of the mean and the variance of the diameter of a random lattice zonotope inscribed in $[0,n]^d$.

\begin{thm}\label{diameter_zono}
Let $\mu_{\text{diam}}^n$ and $(\sigma^2)^n_{\text{diam}}$ denote the mean and the variance of the diameter of a lattice zonotope drawn uniformly at random among the lattice zonotopes inscribed in $[0,n]^d$. Then
$$ \mu_{\text{diam}}^n \underset{n \rightarrow +\infty}{\sim} \frac{\kappa_d^{1/(d+1)}}{\zeta(d+1)} \, n^{\frac{d}{d+1}}, \qquad (\sigma^2)^n_{\text{diam}} \underset{n \rightarrow +\infty}{\sim} \left(1 - \frac{1}{2^{d}} - \frac{d}{(d+1)\,\zeta(d+1)}\right)\mu_{\text{diam}}^n.$$
\end{thm}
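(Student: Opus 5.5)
We would prove Theorem~\ref{diameter_zono} by adding a marking variable to the generating function behind Theorem~\ref{theorem_1} and rerunning the same saddle-point and Mellin analysis with that variable near $1$.

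\emph{Combinatorial input.} We use the combinatorial description of Section~\ref{section_combi}. A lattice zonotope inscribed in $[0,n]^d$ is encoded by a finitely supported multiplicity function $k$ on a fixed set $P$ of primitive directions. Here $P$ consists of $2^{d-1}$ sign-classes of primitive vectors, one representative per line, and direction $\vv$ contributes the generator $k(\vv)\vv$. The graph of a zonotope is determined by its pairwise non-parallel generators, and its diameter equals the number of such generators, i.e. $\#\{\vv \in P : k(\vv)\ge 1\}$. With $\xx^{|\vv|}=\prod_i x_i^{|v_i|}$, this gives the bivariate generating function
\[
F(\xx,u)=\prod_{\vv\in P}\Bigl(1+u\,\frac{\xx^{|\vv|}}{1-\xx^{|\vv|}}\Bigr),
\]
in which $z_d(n)=[\xx^{n\un}]F(\xx,1)$. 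The probability generating function of the diameter is therefore $[\xx^{n\un}]F(\xx,u)/[\xx^{n\un}]F(\xx,1)$.

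\emph{Saddle point with the mark.} Set $u=e^{t}$ with $t$ small and $\xx=e^{-\ttheta}$. We would rerun the saddle-point analysis behind Theorem~\ref{theorem_1} uniformly in $t$ in a neighbourhood of $0$. The Mellin analysis of $\log F(e^{-\ttheta},e^t)$ carries over verbatim, because the factor $u$ only modifies the coefficients of the Dirichlet series (power sums over primitive vectors) analytically in $t$. One then obtains
\[
\log [\xx^{n\un}]F(\xx,e^t)=\Phi_n(t)+o(1),
\]
where $\Phi_n(t)=\min_{\ttheta}\bigl(\log F(e^{-\ttheta},e^t)+n\,\un\cdot\ttheta\bigr)$ plus a determinant correction that is $O(\log n)$ and smooth in $t$. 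By symmetry the saddle is diagonal, $\ttheta=\theta(t)\un$. The mean and variance are then $\Phi_n'(0)$ and $\Phi_n''(0)$ up to lower-order errors; equivalently, one shows that the cumulant generating function of the diameter is $\Phi_n(t)-\Phi_n(0)+o(1)$, which also gives the central limit theorem.

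\emph{Mean.} By the envelope theorem,
\[
\Phi_n'(0)=\partial_t\log F=\sum_{\vv\in P} e^{-\theta\,\|\vv\|_1}.
\]
This is the probability that direction $\vv$ is used under the Boltzmann model, summed over directions. The density of primitive vectors is $1/\zeta(d)$, so the sum behaves like $\frac{2^{d-1}}{\zeta(d)}\theta^{-d}$. The saddle equation $n=\sum_{\vv}v_1 e^{-\theta\|\vv\|_1}/(1-e^{-\theta\|\vv\|_1})$ has asymptotics $n\sim \frac{2^{d-1}\zeta(d+1)}{\zeta(d)}\theta^{-(d+1)}$. Here the sum over multiplicities produces the factor $\zeta(d+1)$, and the symmetry between coordinates turns the $d$ coordinate constraints into one scalar equation. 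Solving for $\theta$ gives $\theta\sim(\kappa_d/n)^{1/(d+1)}$, and hence $\mu\sim\kappa_d^{1/(d+1)} n^{d/(d+1)}/\zeta(d+1)$.

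\emph{Variance.} Differentiating once more,
\[
\Phi_n''(0)=\partial_t^2\log F+\partial_t\partial_{\ttheta}\log F\cdot\theta'(0)\un .
\]
The first term is $\sum_{\vv} e^{-\theta\|\vv\|_1}\bigl(1-e^{-\theta\|\vv\|_1}\bigr)$. The $e^{-2\theta\|\vv\|_1}$ part rescales the sum by $2^{-d}$, which yields $\bigl(1-2^{-d}\bigr)\mu$. The second term is the Gaussian-conditioning correction $-\,\mathrm{Cov}(D,N)^{T}\Sigma_N^{-1}\mathrm{Cov}(N,D)$, where $D$ is the diameter and $N$ is the endpoint vector. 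Each entry is again a primitive power sum with the Mellin asymptotics above. By symmetry this correction reduces to a scalar computation, and the homogeneity degrees $-d$ and $-(d+1)$ in $\theta$ produce the factor $\frac{d}{(d+1)\zeta(d+1)}\mu$.

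\emph{Main obstacle.} The hardest step is the uniformity: we must show that the error terms of the saddle-point estimate of Theorem~\ref{theorem_1}, including the $I_{\text{crit}}$ zeta-zero contributions and the off-saddle bounds on the torus, remain $o(1)$ and are smooth in $t$ for $|t|\le \varepsilon$. Only then can $t$-derivatives be taken of the asymptotic expansion. We would first try to prove that all Mellin-derived expansions are analytic in $t$ with error bounds uniform on a small complex disc, and then use Cauchy estimates to control the derivatives.
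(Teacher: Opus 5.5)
Your proposal is correct and follows essentially the paper's proof. The paper likewise marks each used direction by $e^{\eta}$, proves the saddle-point estimate uniformly in $\eta$ (Lemma~\ref{lemma_uniform_tilted}), and reads off the mean and variance from the envelope theorem, using the same four Boltzmann sums you identify (the ones for $\partial_\eta$, $\partial_\eta^2$ and $\partial_\theta\partial_\eta$ of the log-partition function, together with $\ell_d''$).

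The minor differences are these: the paper keeps $\eta$ real and gets the moments from the CLT, via $\eta=t/s_n$, a third-derivative bound and convergence of the moment generating function, rather than from Cauchy estimates in complex $t$; and the $I_{\text{crit}}$ terms you list as an obstacle never enter, since only leading-order Mellin asymptotics at the exact saddle point $\theta^*_{n,\eta}$ are needed.
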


To summarize, this article is organized as follows. We begin with establishing the generating function associated with the combinatorial class of zonotopes (Section~\ref{section_combi}). Then, Section~\ref{section_eulerset} is dedicated to matching power sums over primitive vectors with Eulerian polynomials, which makes it possible to carry out the computations in any dimension. In Section~\ref{section_analysis}, we give an integral formula for the generating function and prove technical lemmas about it. Theorem~\ref{theorem_1} is established in Section~\ref{section_proof_main} using a multidimensional saddle-point method. Finally, the last two sections are respectively dedicated to discussing the asymptotic equivalent obtained, and to computing the first moments of the number of pairwise non-collinear generators (that is, the diameter) and of the number of occurrences of a given primitive generator.

%%%%%%%%%%%%%%%%%%%%%%%%%%%%%%%%%%%%%%%%%%%%%%%%%%%

\section{The class of zonotopes: a combinatorial approach}\label{section_combi}

Recall from the introduction that a \emph{lattice zonotope} $Z$ (or integral zonotope) is a polytope for which there exist $k \in \N$ and $\vv_1, \dots, \vv_k \in \Z^d$ such that, up to translation,
$$ Z = \left\{ \sum_{i=1}^k \alpha_i \vv_i  \;\middle|\; \alpha_1,\dots,\alpha_k \in [0,1]\right\}. $$
Since we study zonotopes inscribed in a hypercube, we consider lattice zonotopes up to translation in the sequel. In other words, all the generating segments start at $\boldsymbol{0}$.

\begin{figure}[H]
    \centering
    \subfloat[\centering 2 dimensions]{{\includegraphics[width=6cm]{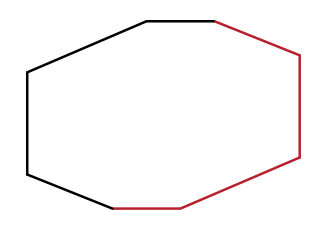} }}%
    \qquad
    \subfloat[\centering 3 dimensions]{{\includegraphics[width=6cm]{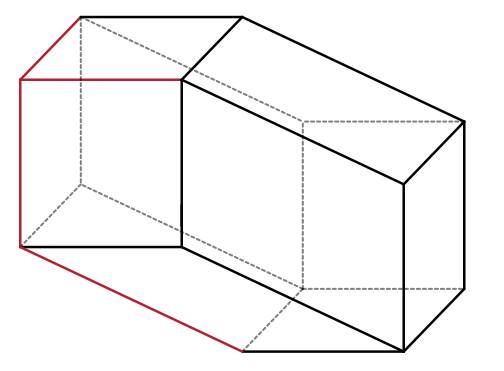} }}%
    \caption{Zonotopes generated by 4 vectors (in red).}%
    \label{fig:example}%
\end{figure}

A family of generators $\vv_1,\dots,\vv_k \in \Z^d\setminus\{\boldsymbol{0}\}$ uniquely defines a zonotope, but the converse is not true: for instance, two collinear generators $\vv$ and $2\vv$ give the same zonotope as the single generator $3\vv$. We denote by $\textsc{Zon}_d$ the set of $d$-dimensional lattice zonotopes up to translation. The projection of $Z$ on the $j$-th coordinate axis is a segment of length $\sum_{i=1}^k |v_{i,j}|$, so that, up to translation, the bounding box of $Z$ is $[0,n_1]\times\dots\times[0,n_d]$ with $n_j = \sum_{i=1}^k |v_{i,j}|$. We say that $Z$ is \emph{inscribed} in this box. We denote by $z_d(n)$ the number of lattice zonotopes inscribed in $[0,n]^d$. Note that this count includes the zonotopes of dimension less than $d$; for instance, the segment $[\boldsymbol{0}, n\un]$ is inscribed in $[0,n]^d$. These degenerate zonotopes, all of whose generators lie in a hyperplane, form a vanishing proportion of all zonotopes (see \cite[Proposition 5.2]{Barany:zonotopes} for the analogous statement for zonotopes in a cone).
As was previously done in the study of polygonal chains \cite{Bureaux:polygons}, we state a bijective correspondence between zonotopes and finitely supported functions encoding their generators. \medskip

A vector $\vv \in \Z^d \setminus\{\boldsymbol{0}\}$ is called primitive if the gcd of its non-zero coordinates is 1, namely if the intersection of the segment $[\boldsymbol{0},\vv]$ with $\Z^d$ consists of its two endpoints only. Let $\P_d$, respectively $\P_{d+}$, denote the set of primitive vectors of $\Z^d$, respectively $(\Z_+)^d$, where $\Z_+ = \{0,1,2,\dots\}$. Then let $\P_d^\star$ denote the set of primitive vectors whose first non-zero coordinate is positive, following the convention of \cite{Deza:primitive}. The set $\Omega$ of finitely supported functions $\omega : \P_d^\star \rightarrow \N$ is in one-to-one correspondence with $\textsc{Zon}_d$. More precisely:
\begin{itemize}
    \item the function $\omega$ associated with a zonotope $Z$ is defined, for all $\vv \in \P_d^\star$, as the number of occurrences of $\vv$ in an edge of $Z$ collinear to $\vv$ if there is one (that is, the length of this edge divided by the length of $\vv$), and 0 otherwise;
    \item given a function $\omega$, the list of generators $\{ \omega(\vv) \vv, \; \vv \in \text{supp}(\omega)\}$ defines $Z$. In particular, $\sum_{\vv\in  \P_d^\star}\omega(\vv)\vv$ is the endpoint of the zonotope.
\end{itemize}

This combinatorial structure is a set of elements with repetitions allowed and without order. It is known as a multiset in the framework of the symbolic method (see \cite[Chapter~I, pp.~26--27]{Flajolet:AC}). This allows us to apply well-known analytic tools to the generating function of the multisets of elements of $\P_d^\star$, whose study is part of the contribution of this paper. \medskip

Let $Zon_d$ denote the multivariate generating function of the class $\textsc{Zon}_d$, that is
$$ Zon_d(\xx) = \sum_{\nn \in \N^d} p_{\nn}\, x_1 ^{n_1}\cdots x_d^{n_d}, $$
where $p_{\nn}$ is the number of zonotopes inscribed in the box $[0,n_1] \times \dots \times [0,n_d]$. The variable $x_i$ thus encodes the width of zonotopes in the $i^{\text{th}}$ direction. As usual, we write $\xx^{\nn} =x_1 ^{n_1}\cdots x_d^{n_d}$, $\un$ for the vector $(1,1, \dots,1)$, and $\xx \cdot \yy$ for the canonical scalar product of $\xx$ and $\yy$. We also denote by $[\vv]$ the vector of the absolute values of the coordinates of $\vv$, that is $(|v_1|,\dots, |v_d|)$. \medskip

Adding a generator $\omega(\vv)\vv$ to a zonotope increases the width of its bounding box in the $i^{\text{th}}$ direction by $\omega(\vv)|v_i|$, for $1 \leq i \leq d$. Hence we can rewrite the generating function using finitely supported functions as follows:
$$Zon_d(\xx) =  \sum_{\omega \in \Omega} \prod_{\vv \in \P_d^\star} \xx^{\omega(\vv) [\vv]}, $$
and a factorization finally gives the usual form of the generating function of a multiset:
\begin{align}\label{partition_function}
 Zon_d(\xx) =  \prod_{ \vv \in \P_d^\star}\left(1 - \xx^{[\vv]} \right)^{-1}.   
\end{align}

This function is analytic on the open unit polydisk $\{\xx \in \C^d, \; |x_i| < 1 \text{ for } 1 \leq i \leq d\}$, and its dominant singularity is at $\un$. In the sequel, we use the change of variables $x_i = e^{-\theta_i}$ and study the function as $\ttheta$ goes to $\boldsymbol{0}$. This expression is the partition function of the Boltzmann model considered in \cite{Bureaux:polygons}:
$$ \prod_{ \vv \in \P_d^\star}\left(1 - e^{ - \ttheta \cdot [\vv]} \right)^{-1}. $$

Denote by $d(\vv)$ the number of non-zero coordinates of $\vv$. For a given $\uu \in \P_{d+}$, there are exactly $2^{d(\uu)-1}$ vectors $\vv \in \P_d^\star$ with $[\vv] = \uu$ (the first non-zero coordinate being positive). Grouping these vectors, we get
\begin{align}\label{generating_function}
    Zon_d\left(e^{- \ttheta}\right) =  \prod_{ \vv\in \P_{d+}} \left( 1 - e^{-  \ttheta \cdot \vv } \right)^{- 2^{d(\vv)-1}}.
\end{align}

%%%%%%%%%%%%%%%%%%%%%%%%%%%%%%%%%%%%%%%%%%%%%%%%%%%

\section{Integral representation of Eulerian sums over coprime vectors}\label{section_eulerset}

The proof of Theorem~\ref{theorem_1}, as a generalization of the result of \cite{Bureaux:polygons}, follows the same framework: we use an integral representation of the logarithm of the generating function given by the Mellin transform. This leads to an estimate of the generating function near $\ttheta = \boldsymbol{0}$ by successively shifting the integration contour of the inverse transform to the left. It turns out that this method can be carried out in any dimension thanks to a link between Eulerian polynomials and power sums over primitive vectors. \medskip 

Let $\aalpha \in \N^d$ and $t > 0$, and denote by $\text{int}(\R^d_{+})$ the interior of $\R^d_{+}$. The aim of this section is to study the sum
$$S_{\aalpha} (e^{-t}) = \sum_{\vv \in \P_{d+} \cap\, \text{int}(\R^d_{+})} \vv^{\aalpha} e^{ - t\, \un \cdot \vv} = \sum_{n = 1}^{+ \infty} \Bigg( \sum_{\substack{\vv \in \P_{d+}\cap\, \text{int}(\R^d_{+}) \\ \un \cdot \vv = n}} \vv^{\aalpha} \Bigg) e^{ - t n}. $$
The inner sums, over the primitive vectors with positive coordinates and $L^1$ norm equal to $n$, are denoted by $S_n(\aalpha)$. When the dimension needs to be specified, we write $S^{(d)}_{\aalpha}$ and $S^{(d)}_n(\aalpha)$. \medskip

\begin{rem}
The sum only involves vectors whose coordinates are all positive. Equivalently, it can be seen as a sum over all elements of $\P_{d+}$ with the convention that $\vv^{\aalpha} = 0$ whenever $\vv$ has a zero coordinate (even if the corresponding coordinate of $\aalpha$ is zero).
\end{rem}

Before stating the main proposition of this section, we recall the definition of the Mellin transform. Let $g : (0,+\infty) \to \C$ be a locally integrable function such that $g(t) = O(t^{-a})$ as $t \rightarrow 0^+$ and $g(t) = O(t^{-b})$ as $t \rightarrow +\infty$, with $a<b$. For all $s$ with $a< \Re(s) <b$, the Mellin transform of $g$ is defined as
$$ \mathcal{M}\left[g(t)\right](s) = \int_0^{+\infty} g(t)\,t^{s-1}\,dt.$$
For instance, the Mellin transform of $e^{-t}$ is the Gamma function: $\mathcal{M}[e^{-t}](s) = \Gamma(s)$ for $\Re(s) > 0$.

\begin{prop}\label{proposition_eulerian}
For all $s \in \C$ with $\Re(s) > \un \cdot \aalpha + d$,
 \begin{align}
     \mathcal{M}\left[ S_{\aalpha}(e^{-t}) \right](s) = \frac{1}{\zeta(s - \aalpha \cdot \un )} \mathcal{M}\left[ \frac { \prod_{i = 1}^{d}  A_{\alpha_i}(e^{-t})}{(1-e^{-t}) ^{\aalpha \cdot \un + d}}  \right](s) ,
 \end{align}
 where $A_{k}$ denotes the $k$-th Eulerian polynomial (with the normalization given below). In particular, $\mathcal{M}\left[ S_{\aalpha}(e^{-t}) \right]$ has a meromorphic continuation to $\C$.
\end{prop}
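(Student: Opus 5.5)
Our plan is to remove the coprimality condition by Möbius inversion over the common multiple, and then compute the resulting unrestricted sum over the positive orthant in closed form with Eulerian polynomials.

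\textbf{Step 1: the unrestricted sum.} Let $T_{\aalpha}(e^{-t})$ be the analogue of $S_{\aalpha}(e^{-t})$ taken over all $\vv\in\N^d$ with positive coordinates, not only primitive ones. Write each such $\vv$ uniquely as $\vv=m\uu$ with $m\ge 1$ and $\uu\in\P_{d+}\cap\text{int}(\R^d_+)$. Then $\vv^{\aalpha}=m^{\aalpha\cdot\un}\uu^{\aalpha}$, and therefore
$$T_{\aalpha}(e^{-t})=\sum_{m\ge1} m^{\aalpha\cdot\un}\, S_{\aalpha}(e^{-mt}).$$

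\textbf{Step 2: passing to Mellin transforms.} We use the dilation rule $\mathcal M[g(mt)](s)=m^{-s}\mathcal M[g](s)$ and interchange sum and integral. This gives
$$\mathcal M[T_{\aalpha}](s)=\zeta(s-\aalpha\cdot\un)\,\mathcal M[S_{\aalpha}](s).$$
The interchange has to be justified, and we would do it by absolute convergence (Fubini). The bounds come from comparing $S_{\aalpha}$ with $T_{\aalpha}$:
\begin{itemize}
\item as $t\to0^+$, $S_{\aalpha}(e^{-t})\le T_{\aalpha}(e^{-t})=O(t^{-\aalpha\cdot\un-d})$;
\item as $t\to+\infty$, both sums decay exponentially, since the smallest $L^1$ norm that occurs is $d$.
\end{itemize}
Hence $\mathcal M[S_{\aalpha}](s)$ converges for $\Re(s)>\aalpha\cdot\un+d$. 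On this half-plane we also have $\Re(s-\aalpha\cdot\un)>d\ge 1$, so the zeta series converges absolutely. Since $\zeta$ has no zeros for real part greater than $1$, we may divide by $\zeta(s-\aalpha\cdot\un)$.

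\textbf{Step 3: factorising $T_{\aalpha}$ via Eulerian polynomials.} Since $e^{-t\un\cdot\vv}=\prod_i e^{-tv_i}$, the unrestricted sum factorises coordinatewise:
$$T_{\aalpha}(e^{-t})=\prod_{i=1}^d\sum_{k\ge1}k^{\alpha_i}x^k,\qquad x=e^{-t}.$$
We now use the classical identity $\sum_{k\ge1}k^{a}x^k=A_a(x)/(1-x)^{a+1}$, with the normalization of $A_a$ chosen so that this holds; for example $A_0(x)=x$ and $A_1(x)=x$. Taking the product over $i$ gives exactly
$$T_{\aalpha}(e^{-t})=\frac{\prod_i A_{\alpha_i}(e^{-t})}{(1-e^{-t})^{\aalpha\cdot\un+d}},$$
and combining with Step 2 proves the stated formula.

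\textbf{Step 4: meromorphic continuation.} It remains to continue $\mathcal M[S_{\aalpha}]$ to all of $\C$.
\begin{itemize}
\item The function $F(t)=\prod_i A_{\alpha_i}(e^{-t})(1-e^{-t})^{-\aalpha\cdot\un-d}$ decays exponentially at $+\infty$.
\item Near $0$ it has a Laurent expansion in powers of $t$, because $F$ is meromorphic at $t=0$.
\item By the standard Mellin argument (subtract finitely many terms of the expansion near $0$), $\mathcal M[F]$ therefore extends meromorphically to $\C$, with at most simple poles at integers $\le \aalpha\cdot\un+d$.
\item $1/\zeta(s-\aalpha\cdot\un)$ is meromorphic on $\C$, because $\zeta$ is meromorphic and not identically zero.
\end{itemize}
The product of these two meromorphic functions gives the continuation.

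The main obstacle is modest. It is the careful justification of the interchange in Step 2 together with the exact normalization of the $A_k$ in Step 3, since the convention that $\vv^{\aalpha}=0$ on the boundary must match the sums starting at $k=1$. The continuation in Step 4 is the standard Mellin argument for functions with asymptotic expansions.
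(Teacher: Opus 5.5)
Your proof is correct and follows the paper's argument: the decomposition $\vv = m\uu$ of Lemma~\ref{lem_eulerian_sum_notation}, the Mellin dilation with Fubini--Tonelli producing the factor $\zeta(s-\aalpha\cdot\un)$, and the coordinatewise factorisation through Eulerian polynomials. The only minor difference is in the meromorphic continuation: the paper writes the right-hand side as $\sum_{n\ge1}R(n)e^{-nt}$ with $R$ a polynomial, so that its transform is a finite combination of the functions $\Gamma(s)\zeta(s-j)$, whereas you use the Laurent expansion at $t=0$; both arguments work.
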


\medskip

Eulerian polynomials were first studied by Euler \cite{Euler}, in connection with the series expansion of $\frac{y e^x}{1 - y e^x}$. Denote by $A(n,m)$ the Eulerian numbers, that is, the number of permutations of $\{1,\dots,n\}$ with exactly $m$ descents. In this paper, we use the following normalization of Eulerian polynomials:
$$ A_0(y) = y \qquad \text{and} \qquad A_n(y) = \sum_{m= 0}^{n-1} A(n,m)\,y^{n-m} \quad \text{for } n \geq 1,$$
which differs from the most common one by a factor $y$. Hence the first Eulerian polynomials are $A_1(y) = y$, $A_2(y) = y^2 + y$, and $A_3(y) = y^3 + 4 y^2 + y$. With this normalization, they satisfy, for $|y|<1$ and $n \geq 0$,
\begin{align}\label{eulerien_poly}
    \sum_{k=1}^\infty k^n y^{k}  &= \frac{ A_n (y)}{(1-y)^{n+1}}.
\end{align}

\begin{lem}\label{lem_eulerian_sum_notation} 
For any complex number $y$ with $|y| < 1$, 
\begin{align}\label{relation_lem_eulerian}
    \sum_{k \geq 1} k^{\un \cdot \aalpha} S_{\aalpha} (y^k) =  \frac { \prod_{i = 1}^{d}  A_{\alpha_i}(y)}{(1-y) ^{\aalpha \cdot \un + d}} .
\end{align} 
\end{lem}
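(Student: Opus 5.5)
The plan is to expand the right-hand side of \eqref{relation_lem_eulerian} as a sum over all vectors with positive coordinates, and then regroup that sum according to the gcd of the coordinates. By \eqref{eulerien_poly} applied to each coordinate separately, and since $(1-y)^{\aalpha\cdot\un+d}=\prod_{i=1}^d (1-y)^{\alpha_i+1}$, we have
$$\frac{\prod_{i=1}^{d} A_{\alpha_i}(y)}{(1-y)^{\aalpha\cdot\un+d}}=\prod_{i=1}^d \sum_{u_i\ge 1} u_i^{\alpha_i} y^{u_i}=\sum_{\uu\in(\Z_{>0})^d} \uu^{\aalpha}\, y^{\un\cdot\uu}.$$
For $|y|<1$ every series involved converges absolutely, so expanding the product and rearranging terms is legitimate.

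Next, every $\uu\in(\Z_{>0})^d$ can be written uniquely as $\uu=k\vv$, where $k=\gcd(u_1,\dots,u_d)\ge1$ and $\vv\in\P_{d+}\cap\text{int}(\R^d_+)$. Conversely, each such pair $(k,\vv)$ gives a vector $\uu$ with positive coordinates. Under this bijection the summand becomes
$$\uu^{\aalpha}y^{\un\cdot\uu}=k^{\un\cdot\aalpha}\,\vv^{\aalpha}\,(y^k)^{\un\cdot\vv}.$$
Summing first over $\vv$ for fixed $k$, the inner sum is exactly $S_{\aalpha}(y^k)$ by definition, since $y^k=e^{-kt}$ when $y=e^{-t}$. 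The definition of $S_{\aalpha}$ makes sense for any complex $|y|<1$ by reading $e^{-t\,\un\cdot\vv}$ as $y^{\un\cdot\vv}$.

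The only care point is again absolute convergence, which justifies grouping the terms by $k$. This follows from the same computation with $|y|$ in place of $y$, where all terms are nonnegative. There is no real obstacle here: the lemma is a Möbius-type gcd decomposition combined with the generating function \eqref{eulerien_poly} of the Eulerian polynomials.
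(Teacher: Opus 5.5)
Your proposal is correct and follows the same route as the paper: you expand the right-hand side coordinatewise via \eqref{eulerien_poly} into a sum over $\Z_{>0}^d$, then regroup using the unique decomposition $\uu = k\vv$ with $k = \gcd(u_1,\dots,u_d)$ and $\vv$ primitive. Like the paper, you justify the rearrangement by absolute convergence of $\sum_{\uu} \uu^{\aalpha}|y|^{\un\cdot\uu}$ (Fubini--Tonelli).
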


\begin{proof}
This relation relies on the definition of Eulerian polynomials and on the partition
$$\Z_{>0}^d =\bigsqcup_{k \geq 1} \left\{ k\vv, \; \vv \in \P_{d+} \cap\, \text{int}(\R^d_{+}) \right\}.$$ The left-hand side can be written as
$$ \sum_{k \geq 1 } k^{\un \cdot \aalpha}  S_{\aalpha} (y^k) = \sum_{k \geq 1} \sum_{n \geq 1} \sum_{\substack{ \vv \in \P_{d+} \cap\, \text{int}(\R^d_{+}) \\   \un \cdot \vv = n }}(k \vv)^{\aalpha}\, y^{k\, \un \cdot \vv}.$$
The set of summation is the partition mentioned above. For $|y| < 1$, the series $\sum_{\vv} \vv^{\aalpha} |y|^{\un \cdot \vv}$ over $\vv \in \Z_{>0}^d$ converges; hence the Fubini--Tonelli theorem allows us to rearrange the sums, and relation (\ref{eulerien_poly}) concludes the proof:
$$ \sum_{\vv \in \Z_{>0}^d} \vv ^{\aalpha} y^{\un \cdot \vv}   = \prod_{i = 1}^{d} \left( \sum_{v_i \geq 1} v_i^{\alpha_i} y^{v_i} \right)  = \frac { \prod_{i = 1}^{d}  A_{\alpha_i}(y)}{(1-y) ^{\aalpha \cdot \un + d}}.  $$
\end{proof}

With our normalization, $A_n$ has degree $n$ for $n \geq 1$, $A_0$ has degree 1, and $A_n$ has no constant term. Hence the degree of $\prod_{i = 1}^{d}  A_{\alpha_i}(y)$ is at most $\aalpha \cdot \un + d$, with equality for $\aalpha = \textbf{0}$. The vectors $\aalpha$ needed in this paper are $\textbf{0}$, $(1,0,\dots,0)$, and $(2,0,\dots,0)$, for which the product of polynomials respectively equals $y^{d}$, $y^{d}$, and $y^{d+1}+ y^{d}$.

Note that $S_n(\aalpha)$ is a power sum over the primitive points with positive coordinates lying on the dilate $n\Delta_{d-1}$ of the $(d-1)$-dimensional standard simplex.

\begin{proof}[Proof of Proposition~\ref{proposition_eulerian}]
Since
$$0 \leq S_{\aalpha}(e^{-t}) \leq \frac{\prod_{i} A_{\alpha_i}(e^{-t})}{(1-e^{-t})^{\aalpha\cdot\un+d}},$$
we have $S_{\aalpha}(e^{-t}) = O(t^{-\aalpha \cdot \un - d})$ as $t \rightarrow 0^+$, and $S_{\aalpha}(e^{-t}) = O(e^{-dt})$ as $t \rightarrow +\infty$. Hence the Mellin transform of $S_{\aalpha}(e^{-t})$ is defined for $\Re(s) > \aalpha \cdot \un + d$, and so is the Mellin transform of the right-hand side of (\ref{relation_lem_eulerian}) with $y = e^{-t}$. For such $s$, the series $\sum_{k \geq 1} k^{\aalpha \cdot \un - \Re(s)}$ converges, so that, by the Fubini--Tonelli theorem and the change of variable $t' = kt$,
$$ \int_0^{+\infty} \sum_{k \geq 1 } k^{\aalpha \cdot \un} S_{\aalpha}(e^{-kt})\, t^{s-1}\, d t= \zeta(s-\aalpha\cdot \un) \: \mathcal{M}\left[ S_{\aalpha}(e^{-t}) \right](s).$$
Replacing the left-hand side using Lemma~\ref{lem_eulerian_sum_notation}, and dividing by $\zeta(s-\aalpha\cdot \un)$, which does not vanish for $\Re(s) > \aalpha\cdot\un + 1$, gives the proposition. Finally, expanding $\prod_{i} A_{\alpha_i}(y)/(1-y)^{\aalpha\cdot\un+d}$ as $\sum_{n \geq 1} R(n) y^n$, where $R$ is a polynomial, shows that the Mellin transform of the right-hand side is $\Gamma(s) \sum_{n\geq 1} R(n) n^{-s}$, a finite linear combination of functions $\Gamma(s)\zeta(s-j)$, hence meromorphic on $\C$.
\end{proof}

The Mellin transform is widely used to estimate asymptotic behaviors through the correspondence between the asymptotic expansion of a function and the poles of its transform, as we do in the next section (and as is done in \cite{Bodini:polyomino} and \cite{Bureaux:polygons}). Proposition~\ref{proposition_eulerian} makes it possible to compute an asymptotic expansion of any of the sums $S_{\aalpha}(y)$ near $y =1$. 

%%%%%%%%%%%%%%%%%%%%%%%%%%%%%%%%%%%%%%%%%%%%%%%%%%%

\section{Asymptotic analysis of the generating function}\label{section_analysis}

We start the analysis of the generating function by giving an integral formula for its logarithm, and an asymptotic equivalent of its partial derivatives. The second part focuses on the univariate function $\theta \mapsto Zon_d(e^{-\theta}\un)$, for which we compute an asymptotic expansion. Both parts are needed for the saddle-point analysis conducted in the next section.

\subsection{Integral formula and partial derivatives of the generating function}

\begin{lem}\label{lemma_integral_form_gg}
Let $c>d$. For all $\ttheta \in (0, + \infty)^d$, we have
\begin{equation}\label{equa_integral}
  \log\left(  Zon_d(e^{-\ttheta}) \right) =   \frac{1}{2 i \pi} \int _{c - i\infty} ^{ c+i \infty} \frac{\zeta(s+1)\Gamma(s)}{\zeta(s)}  \sum_{ \vv\in \Z^d_+ \setminus \{\boldsymbol{0}\}} \frac{2^{d(\vv)-1}}{( \ttheta \cdot \vv)^s}  ds.  
\end{equation}
\end{lem}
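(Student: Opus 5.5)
We start from the product formula (\ref{generating_function}) and take logarithms. Expanding $-\log(1-x)=\sum_{k\ge1}x^k/k$ (valid since $\ttheta\cdot\vv>0$ for every $\vv\in\P_{d+}$ when $\ttheta\in(0,+\infty)^d$) gives
$$\log Zon_d(e^{-\ttheta}) = \sum_{\vv\in\P_{d+}} 2^{d(\vv)-1}\sum_{k\ge1}\frac{1}{k}e^{-k\,\ttheta\cdot\vv}.$$
Next, we replace each exponential by its inverse Mellin (Cahen--Mellin) representation $e^{-x}=\frac{1}{2i\pi}\int_{c-i\infty}^{c+i\infty}\Gamma(s)x^{-s}\,ds$, valid for any $c>0$ and $x>0$. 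This yields, at least formally,
$$\log Zon_d(e^{-\ttheta}) = \frac{1}{2i\pi}\int_{c-i\infty}^{c+i\infty}\Gamma(s)\sum_{\vv\in\P_{d+}}\sum_{k\ge1}\frac{2^{d(\vv)-1}}{k^{s+1}(\ttheta\cdot\vv)^s}\,ds.$$

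The second step is the arithmetic rearrangement. The sum over $k$ contributes $\zeta(s+1)$. Since $d(k\vv)=d(\vv)$, the map $(k,\vv)\mapsto k\vv$ is a bijection from $\N\times\P_{d+}$ onto $\Z_+^d\setminus\{\boldsymbol 0\}$, which is the same partition used in Lemma~\ref{lem_eulerian_sum_notation}, now including vectors with zero coordinates. Hence
$$\sum_{\ww\in\Z_+^d\setminus\{\boldsymbol 0\}}\frac{2^{d(\ww)-1}}{(\ttheta\cdot\ww)^s}=\zeta(s)\sum_{\vv\in\P_{d+}}\frac{2^{d(\vv)-1}}{(\ttheta\cdot\vv)^s},$$
so the sum over primitive vectors equals $\frac{1}{\zeta(s)}$ times the sum over all nonzero vectors of $\Z_+^d$. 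Substituting this gives exactly the integrand of (\ref{equa_integral}). Dividing by $\zeta(s)$ is legitimate because $\zeta(s)\neq0$ for $\Re(s)=c>1$.

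The main obstacle is justifying the exchange of the integral with the double sum; this is where the hypothesis $c>d$ enters. On the line $\Re(s)=c$ we bound the integrand in absolute value by $|\Gamma(c+it)|\sum_{k,\vv}2^{d(\vv)-1}k^{-c-1}(\ttheta\cdot\vv)^{-c}$. We have $\ttheta\cdot\vv\ge(\min_i\theta_i)\,\un\cdot\vv$, and the number of $\vv\in\Z_+^d$ with $\un\cdot\vv=n$ is $O(n^{d-1})$, so the sum over $\vv$ converges exactly when $c>d$. The sum over $k$ converges since $c+1>1$, and $\int|\Gamma(c+it)|\,dt<\infty$ by Stirling's exponential decay. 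Fubini--Tonelli therefore applies, which justifies both the termwise inversion and the rearrangement into $\zeta(s+1)\zeta(s)^{-1}\sum_{\ww}$. This completes the proof.
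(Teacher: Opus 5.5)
Your proposal is correct and follows the paper's proof essentially step for step. Both arguments expand the logarithm, insert the Cahen--Mellin representation of $e^{-x}$, and justify the interchange by Fubini--Tonelli using $c>d$, $\sum_k k^{-1-c}<\infty$ and the integrability of $\Gamma$ on vertical lines; both then use the partition $\Z^d_+\setminus\{\boldsymbol{0}\}=\bigsqcup_{k\geq 1}k\P_{d+}$ with $d(k\vv)=d(\vv)$ to replace the primitive sum by $\zeta(s)^{-1}$ times the full sum. Your lattice-point count justifying convergence for $c>d$ is a detail the paper only asserts, and it does not change the argument.
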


\begin{proof}
We first take the logarithm of $Zon_d(e^{-\ttheta})$ as written in (\ref{generating_function}), where $d(\vv)$ is the number of non-zero coordinates of $\vv$, and expand the logarithm in Taylor series:
$$ \log\left(  Zon_d(e^{-\ttheta}) \right) = - \sum_{ \vv\in \P_{d+}} 2^{d(\vv)-1} \log \left( 1 - e^{-  \ttheta \cdot \vv } \right)  = \sum_{ \vv\in \P_{d+}} \sum_{m \geq 1} \frac{ 2^{d(\vv)-1} }{m} e^{-m\,  \ttheta \cdot \vv }.   $$
For all real numbers $z > 0$ and $c>0$, the Mellin inversion formula gives
$$e^{-z} =  \frac{1}{2 i \pi} \int _{c - i\infty} ^{ c+i \infty} \Gamma(s) z^{-s} ds.$$
We take $c > d$, which ensures that $\sum_{\vv \in  \P_{d+} } (\ttheta \cdot \vv )^{-c} < + \infty$. Since moreover $\sum_{ m\geq1} m^{-1-c} < + \infty$ and $\int_{c+i\R} |\Gamma(s)|\,|ds| < +\infty$, the Fubini--Tonelli theorem leads to
\begin{align*}
\log\left(  Zon_d(e^{-\ttheta}) \right) &=   \frac{1}{2 i \pi} \int _{c - i\infty} ^{ c+i \infty} \sum_{ \vv\in \P_{d+}} \sum_{m \geq 1} \frac{ 2^{d(\vv)-1} }{m} \frac{\Gamma(s)}{(m\, \ttheta \cdot \vv)^s}  ds \\
&=  \frac{1}{2 i \pi} \int _{c - i\infty} ^{ c+i \infty} \zeta(s+1)\Gamma(s) \sum_{ \vv\in \P_{d+}} \frac{ 2^{d(\vv)-1} }{(\ttheta \cdot \vv)^s}  ds.
\end{align*}
Finally, the sum over primitive vectors can be completed into a sum over $\Z^d_+ \setminus \{\boldsymbol{0}\}$ using the partition $\Z^d_+ \setminus \{\boldsymbol{0}\} = \bigsqcup_{k \geq 1} k \P_{d+}$ and the fact that $d(k\vv) = d(\vv)$:
$$ \sum_{ \vv\in \Z^d_+\setminus \{\boldsymbol{0}\}} \frac{2^{d(\vv)-1}}{( \ttheta \cdot \vv)^s} = \sum_{k \geq 1 } \frac{1}{k^s} \sum_{ \vv\in \P_{d+}} \frac{2^{d(\vv)-1}}{( \ttheta \cdot \vv)^s} = \zeta(s) \sum_{ \vv\in \P_{d+}} \frac{2^{d(\vv)-1}}{( \ttheta \cdot \vv)^s}, $$
which gives (\ref{equa_integral}).
\end{proof}

This general formula allows us to derive the following lemma on the asymptotic behavior of the partial derivatives of $\log\left(  Zon_d(e^{-\ttheta}) \right)$. 

\begin{lem}\label{lemma_derivativ}
Let $\varepsilon \in (0,1)$. For all $\kk = (k_1,\dots,k_d) \in \N^d$, as $\ttheta \rightarrow \boldsymbol{0}$ with $\varepsilon \leq \theta_i/\theta_j \leq 1/\varepsilon$ for all $i,j$,
$$\frac{\partial^{k_1 + k_2 +\dots+ k_d}}{\partial\theta_1^{k_1} \partial\theta_2^{k_2}\cdots\partial \theta_d^{k_d}} \log\left(  Zon_d(e^{-\ttheta}) \right) \sim (-1)^{k_1 + \dots + k_d}\, \frac{2^{d-1}\zeta(d+1)}{\zeta(d)}\,\frac{\prod_{i=1}^d k_i !}{\theta_1^{k_1+1}\cdots\theta_d^{k_d+1}}.$$
\end{lem}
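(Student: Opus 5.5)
The plan is to avoid the contour integral of Lemma~\ref{lemma_integral_form_gg} and to work directly with the series obtained in its proof. We differentiate the double series term by term, compare the resulting lattice sums with integrals, and use Möbius inversion to pass from all lattice vectors to primitive ones.

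\emph{Step 1: term-by-term differentiation.} From
$$\log Zon_d(e^{-\ttheta}) = \sum_{\vv\in\P_{d+}}\sum_{m\ge1}\frac{2^{d(\vv)-1}}{m}e^{-m\ttheta\cdot\vv},$$
we obtain, with $|\kk|=k_1+\dots+k_d$,
$$\partial^{\kk}\log Zon_d(e^{-\ttheta}) = (-1)^{|\kk|}\sum_{m\ge1} m^{|\kk|-1}\sum_{\vv\in\P_{d+}}2^{d(\vv)-1}\vv^{\kk}e^{-m\ttheta\cdot\vv}.$$
Term-by-term differentiation is legitimate because all terms have a fixed sign and the differentiated series converge locally uniformly on $(0,+\infty)^d$. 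Write $\ttheta=t\uu$ with $t=\theta_1+\dots+\theta_d\to0$. The ratio hypothesis confines $\uu$ to a compact subset $K_\varepsilon$ of the open simplex, and every estimate below must be uniform in $\uu\in K_\varepsilon$.

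\emph{Step 2: the inner sum.} Vectors with a zero coordinate contribute nothing when some $k_i\ge1$. When $\kk=\boldsymbol 0$, those lying in a given proper face contribute $O(t^{-(d-1)})$, by the same computation done in lower dimension. Hence only $\vv\in\P_{d+}\cap\mathrm{int}(\R^d_+)$ matters, and there $2^{d(\vv)-1}=2^{d-1}$. Let
$$F(\tau)=\sum_{\vv\in\Z_{>0}^d}\vv^{\kk}e^{-\tau\uu\cdot\vv}=\prod_{i=1}^d\frac{A_{k_i}(e^{-\tau u_i})}{(1-e^{-\tau u_i})^{k_i+1}},$$
where the product formula follows from (\ref{eulerien_poly}), as in Lemma~\ref{lem_eulerian_sum_notation}. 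Expanding each factor gives
$$F(\tau)=\prod_i\frac{k_i!}{(\tau u_i)^{k_i+1}}\bigl(1+O(\tau)\bigr),$$
uniformly for $\uu\in K_\varepsilon$. The partition $\Z_{>0}^d=\bigsqcup_{j\ge1} j\bigl(\P_{d+}\cap\mathrm{int}(\R^d_+)\bigr)$ gives $F(\tau)=\sum_{j\ge1} j^{|\kk|}P(j\tau)$, where $P$ denotes the primitive sum. Möbius inversion then yields
$$P(\tau)=\sum_{j\ge1}\mu(j)\,j^{|\kk|}F(j\tau).$$

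\emph{Step 3: assembling the constants.} Substituting $\tau=mt$ gives
$$\partial^{\kk}\log Zon_d = (-1)^{|\kk|}2^{d-1}\sum_{m,j\ge1}m^{|\kk|-1}\mu(j)\,j^{|\kk|}F(jmt)+O\bigl(t^{-|\kk|-d+1}\bigr).$$
Inserting the main term of $F$, the coefficient of $\prod_i k_i!/\theta_i^{k_i+1}$ is
$$2^{d-1}\sum_{m}m^{-1-d}\sum_j\mu(j)\,j^{-d}=2^{d-1}\frac{\zeta(d+1)}{\zeta(d)},$$
which is the claimed constant.

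\emph{Main obstacle.} The difficulty is justifying the interchange when $jmt$ is not small, since the expansion of $F$ only holds for $\tau\to0$. I would split the double sum at $jm\le 1/t$ and $jm>1/t$. For $jm\le1/t$, the main term is used and the $O(\tau)$ error contributes $O\bigl(t^{-|\kk|-d+1}\log(1/t)\bigr)$. For $jm>1/t$, the exponential decay of $F$ makes the tail negligible. Since $|\mu|\le1$ and $d\ge2$, the sums $\sum_j j^{-d}$ converge, and the truncated main-term sums differ from the full ones by $O(t^{d})$ relative error. If the logarithm causes trouble, the fallback is the Mellin route: transform in $t$, where $\zeta(s+1-|\kk|)/\zeta(s-|\kk|)$ times a meromorphic lattice Dirichlet series has its rightmost pole at $s=|\kk|+d$. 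This, however, needs vertical growth bounds uniform in $\uu$, which is why I prefer the elementary splitting.
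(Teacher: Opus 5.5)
Your argument is correct, and it takes a different route from the paper's. The paper stays on the Mellin side throughout. It writes $\log Zon_d(e^{-\ttheta})$ as the inverse Mellin integral of $\frac{\zeta(s+1)}{\zeta(s)}\Gamma(s)F_d(s,\ttheta)$ (Lemma~\ref{lemma_integral_form_gg}). It then shows in Lemma~\ref{lemma_Barnes} that $\partial^{\kk}_{\ttheta}\bigl(\Gamma(s)F_d(s,\ttheta)\bigr)$ continues to $\Re(s)>d-1$ with a single simple pole at $s=d$. There it also proves bounds decaying like $e^{-\eta|\Im(s)|}$, uniform on the cone $\theta_i\ge\varepsilon|\ttheta|$, by rotating the half-line of integration to $e^{\pm i\eta}(0,\infty)$ and using Cauchy estimates in $\ttheta$. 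Finally it shifts the contour to some $c'\in(\max(1,d-1),d)$, and the residue $\frac{\zeta(d+1)}{\zeta(d)}\partial^{\kk}_{\ttheta}\bigl(2^{d-1}/(\theta_1\cdots\theta_d)\bigr)$ gives the main term. So the paper essentially carries out your Mellin fallback, and the uniform vertical bounds you were wary of are exactly the content of Lemma~\ref{lemma_Barnes}. You work on the function side instead. Your Möbius inversion is the counterpart of the factor $1/\zeta(s)$ produced in Lemma~\ref{lemma_integral_form_gg} when primitive vectors are completed to all lattice vectors. The constant then appears as $\sum_m m^{-1-d}\sum_j\mu(j)j^{-d}$ rather than as a residue, and no analytic continuation or contour estimate is needed. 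The paper's route fits the Mellin framework it needs anyway for the finer expansion of Proposition~\ref{prop_equivalent_monovariate}, where lower-order poles and the zeros of $\zeta$ matter. Yours is shorter and self-contained for a leading-order statement like this one.

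Two points need tidying. First, in Step~2 it is not true that every vector with a zero coordinate drops out once some $k_i\ge1$. With $0^0=1$, only vectors vanishing at a coordinate $i$ with $k_i\ge1$ drop out; for instance $(1,0,\dots,0)$ contributes when $\kk=(1,0,\dots,0)$. This is harmless: a face of dimension $\delta\le d-1$ contributes $O(t^{-|\kk|-\delta})$ by the bound $\sum_{v\ge1}v^ke^{-xv}\le C_kx^{-k-1}$, valid for all $x>0$, and this is already covered by your $O(t^{-|\kk|-d+1})$ term in Step~3. Second, your ``main obstacle'' needs no splitting. The function $x^{|\kk|+d}F(x)$ is bounded on $(0,\infty)$ uniformly for $\uu\in K_\varepsilon$ and tends to $\prod_i k_i!/u_i^{k_i+1}$ as $x\to0$. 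Since $\sum_{m,j}m^{-1-d}j^{-d}<\infty$ for $d\ge2$, dominated convergence gives the equivalence at once. The splitting does give a rate, but the truncated main-term tail is $O(t^{d-1})$ relative, not $O(t^{d})$.
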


The sum appearing in (\ref{equa_integral}) is a linear combination of Barnes zeta functions $\sum_{\nn \in \Z_+^\delta}(w + \ttheta\cdot\nn)^{-s}$ (see \cite{Barnes:multiple, Ruijsenaars:Barnes}). We do not need the general theory of these functions: the only properties we use are contained in the following elementary lemma, which deals with the product $\Gamma(s)F_d(s,\ttheta)$, where
$$F_d(s,\ttheta) = \sum_{ \vv\in \Z^d_+ \setminus \{\boldsymbol{0}\}} \frac{2^{d(\vv)-1}}{( \ttheta \cdot \vv)^s}, \qquad \Re(s)>d.$$

\begin{lem}\label{lemma_Barnes}
Let $\varepsilon \in (0,1)$, $\eta \in (0,\frac{\pi}{2})$, $\delta \in (0,1)$, and $\kk \in \N^d$. For $\ttheta \in (0,+\infty)^d$, the function $s \mapsto \frac{\partial^{\kk\cdot\un}}{\partial\ttheta^{\kk}}\left(\Gamma(s) F_d(s,\ttheta)\right)$ has a meromorphic continuation to $\Re(s) > d-1$, whose only pole is a simple pole at $s = d$ with residue
$$A_{\kk}(\ttheta) = \frac{\partial^{\kk\cdot\un}}{\partial\ttheta^{\kk}} \left(\frac{2^{d-1}}{\theta_1\cdots\theta_d}\right) = (-1)^{\kk\cdot\un}\, 2^{d-1} \frac{\prod_{i=1}^d k_i!}{\theta_1^{k_1+1}\cdots\theta_d^{k_d+1}}.$$
Moreover, there exists a constant $C>0$ such that, for all $\ttheta \in (0,+\infty)^d$ with $\theta_i \geq \varepsilon|\ttheta|$ for $1 \leq i \leq d$, and for all $s$ with $\Re(s) \in [d-1+\delta, d+1]$,
$$ \left| \frac{\partial^{\kk\cdot\un}}{\partial\ttheta^{\kk}}\left(\Gamma(s) F_d(s,\ttheta)\right) \right| \leq C\, e^{-\eta|\Im(s)|}\, \left(1 + \frac{1}{|s-d|}\right)|\ttheta|^{-\kk\cdot\un - \Re(s)}.$$
\end{lem}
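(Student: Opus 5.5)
We write $\Gamma(s)F_d(s,\ttheta)$ as a Mellin transform and split off the singular part explicitly. For $\Re(s)>d$,
$$\Gamma(s)F_d(s,\ttheta)=\int_0^{+\infty} G(t,\ttheta)\,t^{s-1}\,dt,\qquad G(t,\ttheta)=\sum_{\vv\in\Z_+^d\setminus\{\boldsymbol 0\}}2^{d(\vv)-1}e^{-t\,\ttheta\cdot\vv}.$$
Grouping $\vv$ according to its support $J=\{i: v_i\neq 0\}$ gives the closed form
$$G(t,\ttheta)=\sum_{\emptyset\neq J\subseteq\{1,\dots,d\}}2^{|J|-1}\prod_{i\in J}\frac{1}{e^{t\theta_i}-1}.$$
Each factor satisfies $\frac{1}{e^{u}-1}=\frac1u-\frac12+u\,h(u)$ with $h$ analytic near $0$. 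Hence the only term of order $t^{-d}$ comes from $J=\{1,\dots,d\}$, namely $2^{d-1}/(t^d\theta_1\cdots\theta_d)$. The remainder $G-2^{d-1}/(t^d\theta_1\cdots\theta_d)$ is $O(t^{-(d-1)}|\ttheta|^{-(d-1)})$ as $t\to0$, uniformly in the cone $\theta_i\ge\varepsilon|\ttheta|$.

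The plan is to split $\int_0^\infty=\int_0^{1/|\ttheta|}+\int_{1/|\ttheta|}^\infty$. On the first piece we subtract and add the leading term. This gives
$$\Gamma(s)F_d(s,\ttheta)=\frac{2^{d-1}}{\theta_1\cdots\theta_d}\,\frac{|\ttheta|^{d-s}}{s-d}+\int_0^{1/|\ttheta|}\Big(G-\tfrac{2^{d-1}}{t^d\theta_1\cdots\theta_d}\Big)t^{s-1}dt+\int_{1/|\ttheta|}^\infty G\,t^{s-1}dt .$$
\begin{itemize}
\item The middle integral is holomorphic for $\Re(s)>d-1$.
\item The last integral is entire, since $G$ decays exponentially.
\item The first term is meromorphic with a simple pole at $s=d$ and residue $2^{d-1}/(\theta_1\cdots\theta_d)$.
\end{itemize}
Derivatives in $\ttheta$ can be taken under the integrals, because $\partial_\ttheta^\kk$ of each $1/(e^{t\theta_i}-1)$ keeps the same structure: a Laurent part plus an analytic remainder, with exponential decay at infinity. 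The residue of the derivative is therefore $\partial^\kk_\ttheta(2^{d-1}/\theta_1\cdots\theta_d)$, as stated. Alternatively, and more cleanly, we substitute $t=u/|\ttheta|$ and write $\ttheta=|\ttheta|\boldsymbol\phi$ with $\boldsymbol\phi$ in the compact set $\{\phi_i\ge\varepsilon,|\boldsymbol\phi|=1\}$. Then $\Gamma(s)F_d(s,\ttheta)=|\ttheta|^{-s}\Gamma(s)F_d(s,\boldsymbol\phi)$, which reduces everything to $|\ttheta|=1$ and makes the power $|\ttheta|^{-\kk\cdot\un-\Re(s)}$ automatic by homogeneity.

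The main obstacle is the uniform exponential decay $e^{-\eta|\Im s|}$ for $\eta<\pi/2$. Crude estimates on the Mellin integral along the real axis do not give this decay. I would rotate the contour: for fixed $\boldsymbol\phi$, the function $t\mapsto G(t,\boldsymbol\phi)-2^{d-1}t^{-d}/\prod\phi_i$ extends analytically to the sector $|\arg t|\le\eta'$ for any $\eta<\eta'<\pi/2$. The poles of $1/(e^{t\phi_i}-1)$ lie on the imaginary axis, away from $0$, so this extension is available. The plan is as follows.
\begin{enumerate}
\item Split the integral at $|t|=1$.
\item On $|t|\le1$, the bound follows directly: the term $|\ttheta|^{d-s}/(s-d)$ contributes the $1/|s-d|$ factor, and the decay in $\Im s$ is harmless once we deform the segment $[0,1]$ to a path near the real axis.
\item On $|t|\ge1$, rotate the ray to $\arg t=\pm\eta'$, choosing the sign opposite to that of $\Im s$. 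Along this ray $|t^{s-1}|=|t|^{\Re s-1}e^{-\eta'|\Im s|}$, while $G$ still decays like $e^{-c|t|}$ with $c=\varepsilon\cos\eta'$.
\item Piece $[0,1]$ together with the arc $|t|=1$ and bound the arc by its length times the maximum of $|t^{s-1}|$, which is $\le e^{\eta'|\Im s|}$. This is the delicate point: the arc term must be reorganised so that decay is actually achieved. The remedy is to rotate the whole contour, including the segment near $0$. This is legitimate because the subtracted singular term integrates explicitly on the rotated ray.
\end{enumerate}
All bounds are uniform on the compact set of $\boldsymbol\phi$ and for $\Re s\in[d-1+\delta,d+1]$, where $\delta$ keeps us away from the next singular term $t^{-(d-1)}$. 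This yields the constant $C$.
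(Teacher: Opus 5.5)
Your final argument is the paper's proof: the Mellin representation with the closed form of $H(t\ttheta)$, the reduction by homogeneity to $\{|\ttheta|=1,\ \theta_i\ge\varepsilon\}$, rotating the \emph{whole} ray to $e^{i\phi}(0,+\infty)$ with $|\phi|=\eta$, splitting at $|t|=1$, and integrating the singular term explicitly to get $e^{i\phi(s-d)}A_{\kk}(\ttheta)/(s-d)$ (the paper treats the $\ttheta$-derivatives by Cauchy's estimates on a complex neighbourhood of the compact set rather than term by term). One sign slip to fix: on $\arg t=\phi$ one has $|t^{s-1}|=|t|^{\Re(s)-1}e^{-\phi\,\Im(s)}$, so $\phi$ must have the \emph{same} sign as $\Im(s)$, not the opposite.
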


\begin{proof}
For $\boldsymbol{x} \in \C^d$ with $\Re(x_j) > 0$, set
$$H(\boldsymbol{x}) = \sum_{ \vv\in \Z^d_+ \setminus \{\boldsymbol{0}\}} 2^{d(\vv)-1} e^{-\boldsymbol{x}\cdot\vv} = \sum_{\emptyset \neq J \subseteq \{1,\dots,d\}} 2^{|J|-1} \prod_{j \in J} \frac{1}{e^{x_j}-1}.$$
For $\Re(s) > d$, we have $\Gamma(s)F_d(s,\ttheta) = \int_0^{\infty} t^{s-1} H(t\ttheta)\, dt$, and differentiating under the integral sign,
$$\frac{\partial^{\kk\cdot\un}}{\partial\ttheta^{\kk}}\left(\Gamma(s) F_d(s,\ttheta)\right) = \int_0^{\infty} t^{s-1} H_{\kk}(t,\ttheta)\, dt, \qquad H_{\kk}(t,\ttheta) = \frac{\partial^{\kk\cdot\un}}{\partial\ttheta^{\kk}} H(t\ttheta).$$
By homogeneity, $H_{\kk}(t, \lambda\ttheta) = \lambda^{-\kk\cdot\un}H_{\kk}(\lambda t,\ttheta)$, so that the left-hand side is multiplied by $\lambda^{-\kk\cdot\un - s}$ when $\ttheta$ is replaced by $\lambda\ttheta$; it is therefore enough to consider $\ttheta$ in the compact set $K_\varepsilon = \{|\ttheta| = 1, \; \theta_i \geq \varepsilon\}$.

Let $S_\eta = \{t \in \C\setminus\{0\}, \; |\arg t| \leq \eta\}$. For $\ttheta \in K_\varepsilon$, the function $t \mapsto H(t\ttheta)$ is holomorphic on a neighbourhood of $S_\eta$, since the poles of $1/(e^{t\theta_j}-1)$ lie on the imaginary axis. Moreover, writing $\frac{1}{e^x-1} = \frac{1}{x}\varphi(x)$ with $\varphi(x) = \frac{x}{e^x-1}$ holomorphic on $|x|<2\pi$ and $\varphi(0)=1$, we get, for $t \in S_\eta$ with $|t| \leq 1$,
$$H(t\ttheta) = \frac{2^{d-1}}{t^d\,\theta_1\cdots\theta_d} + R(t,\ttheta), \qquad |R(t,\ttheta)| \leq C_1 |t|^{-(d-1)},$$
since $\prod_j \varphi(t\theta_j) = 1 + O(|t|)$ and the terms with $|J| \leq d-1$ are $O(|t|^{-(d-1)})$; and, for $t \in S_\eta$ with $|t| \geq 1$, $|H(t\ttheta)| \leq C_1 e^{-\varepsilon \cos(\eta)|t|}$. All these functions are holomorphic in $\ttheta$ on a complex neighbourhood of $K_\varepsilon$, and the bounds hold uniformly there; by Cauchy's estimates, the same bounds hold for the derivatives with respect to $\ttheta$, namely $H_{\kk}(t,\ttheta) = A_{\kk}(\ttheta)t^{-d} + R_{\kk}(t,\ttheta)$ with $|R_{\kk}| \leq C_2|t|^{-(d-1)}$ for $|t| \leq 1$, and $|H_{\kk}(t,\ttheta)| \leq C_2 e^{-\varepsilon\cos(\eta)|t|}$ for $|t|\geq 1$, uniformly for $\ttheta \in K_\varepsilon$ and $t \in S_\eta$.

Let $\phi \in [-\eta,\eta]$. By Cauchy's theorem and the above bounds, the integral over $(0,+\infty)$ can be replaced, for $\Re(s)>d$, by the integral over the half-line $e^{i\phi}(0,+\infty)$:
\begin{align*}
\int_0^{\infty} t^{s-1} H_{\kk}(t,\ttheta)\, dt &= e^{i\phi s}\int_0^{\infty} r^{s-1} H_{\kk}(re^{i\phi},\ttheta)\, dr \\
&= \frac{e^{i\phi(s-d)} A_{\kk}(\ttheta)}{s-d} + e^{i\phi s}\int_0^{1} r^{s-1} R_{\kk}(re^{i\phi},\ttheta)\, dr \\
&\qquad + e^{i\phi s}\int_1^{\infty} r^{s-1} H_{\kk}(re^{i\phi},\ttheta)\, dr.
\end{align*}
The right-hand side is meromorphic on $\Re(s) > d-1$, with a single simple pole at $s=d$ of residue $A_{\kk}(\ttheta)$, which gives the meromorphic continuation. For $\Re(s) \in [d-1+\delta, d+1]$, the two integrals are bounded by $C_2/\delta$ and by a constant, respectively. Choosing $\phi = \eta$ if $\Im(s) \geq 0$ and $\phi = -\eta$ otherwise, we have $|e^{i\phi s}| = |e^{i\phi(s-d)}| = e^{-\eta|\Im(s)|}$, which gives the bound of the lemma for $\ttheta \in K_\varepsilon$, and then for all $\ttheta$ by homogeneity.
\end{proof}

\begin{proof}[Proof of Lemma~\ref{lemma_derivativ}]
By Lemma~\ref{lemma_integral_form_gg}, $\log Zon_d(e^{-\ttheta}) = \frac{1}{2i\pi}\int_{c-i\infty}^{c+i\infty} \frac{\zeta(s+1)}{\zeta(s)}\,\Gamma(s)F_d(s,\ttheta)\, ds$ for $c>d$. On every vertical line $\Re(s) = c' > 1$, the function $\zeta(s+1)/\zeta(s)$ is bounded. Hence, by Lemma~\ref{lemma_Barnes}, we can differentiate under the integral sign, and shift the line of integration to $\Re(s) = c'$ with $\max(1,d-1) < c' < d$ (note that $d \geq 2$), the contribution of the horizontal segments vanishing thanks to the factor $e^{-\eta|\Im(s)|}$. The residue at $s=d$ gives
$$\frac{\partial^{\kk\cdot\un}}{\partial\ttheta^{\kk}} \log\left(  Zon_d(e^{-\ttheta}) \right) = \frac{\zeta(d+1)}{\zeta(d)}A_{\kk}(\ttheta) + \frac{1}{2 i \pi} \int _{c' - i\infty} ^{ c'+i \infty} \frac{\zeta(s+1)}{\zeta(s)}\, \frac{\partial^{\kk\cdot\un}}{\partial\ttheta^{\kk}}\left(\Gamma(s) F_d(s,\ttheta)\right) ds,$$
and, by Lemma~\ref{lemma_Barnes}, the last integral is $O(|\ttheta|^{-\kk\cdot\un-c'})$, which is negligible compared with $A_{\kk}(\ttheta)$, of order $|\ttheta|^{-\kk\cdot\un - d}$ under the ratio condition.
\end{proof}

The saddle-point analysis of Section~\ref{section_proof_main} also requires bounds on the derivatives of $\log Zon_d$ at complex points $e^{-\ttheta + i\aalpha}$ close to the positive real axis. They follow from Lemma~\ref{lemma_derivativ} and Cauchy's estimates. For $\boldsymbol{\tau} \in \C^d$ with $\Re(\tau_j) > 0$ for all $j$, we set
$$ \mathcal{L}_d(\boldsymbol{\tau}) = \sum_{ \vv\in \P_{d+}} 2^{d(\vv)-1} \sum_{m \geq 1} \frac{1}{m}\, e^{-m\, \boldsymbol{\tau} \cdot \vv }, $$
which is the branch of $\log Zon_d(e^{-\boldsymbol{\tau}})$ that is real on $(0,+\infty)^d$. This series converges normally on every set $\{\Re(\tau_j) \geq \eta, \; 1 \leq j \leq d\}$ with $\eta>0$, hence $\mathcal{L}_d$ is holomorphic on $\{\Re(\tau_j) >0, \; 1 \leq j \leq d\}$.

\begin{lem}\label{lemma_complex_points}
Let $\varepsilon \in (0,1)$. For all $\kk \in \N^d$, there exist constants $C>0$ and $\eta>0$ such that, for all $\ttheta \in (0,\eta)^d$ with $\varepsilon \leq \theta_i/\theta_j \leq 1/\varepsilon$ for all $i,j$, and for all $\boldsymbol{\tau} \in \C^d$ with $|\tau_j - \theta_j| \leq \frac{\theta_j}{4}$ for $1 \leq j \leq d$,
$$ \left| \frac{\partial^{\kk \cdot \un}}{ \partial \boldsymbol{\tau}^{\kk} } \mathcal{L}_d(\boldsymbol{\tau}) \right| \leq \frac{C}{|\ttheta|^{d + \kk\cdot\un}}.$$
\end{lem}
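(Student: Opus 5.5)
The strategy is to bound $\mathcal{L}_d$ on a slightly larger complex polydisk centred at $\ttheta$ by a real quantity controlled by Lemma~\ref{lemma_derivativ}, and then to recover the derivatives from Cauchy's estimates.

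\textbf{Step 1 (modulus bound).} For $\Re(\tau_j) > 0$ the series defining $\mathcal{L}_d$ has nonnegative coefficients times exponentials, and $|e^{-m\boldsymbol{\tau}\cdot\vv}| = e^{-m\Re(\boldsymbol{\tau})\cdot\vv}$. The triangle inequality therefore gives
$$|\mathcal{L}_d(\boldsymbol{\tau})| \leq \mathcal{L}_d(\Re(\boldsymbol{\tau})) = \log Zon_d(e^{-\Re(\boldsymbol{\tau})}).$$

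\textbf{Step 2 (larger polydisk).} Consider the polydisk $D = \{\boldsymbol{\sigma} : |\sigma_j - \theta_j| \leq \theta_j/2\}$. On $D$ we have $\Re(\sigma_j) \in [\theta_j/2, 3\theta_j/2]$. Consequently the real points $\Re(\boldsymbol{\sigma})$ satisfy the ratio condition with $\varepsilon/3$ in place of $\varepsilon$, and $|\Re(\boldsymbol{\sigma})| \asymp |\ttheta|$.

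\textbf{Step 3 (bound on $D$).} Apply Lemma~\ref{lemma_derivativ} with $\kk = \boldsymbol{0}$ and ratio parameter $\varepsilon/3$. This gives $\log Zon_d(e^{-\boldsymbol{\rho}}) \sim \kappa_d/(\rho_1\cdots\rho_d)$ as $\boldsymbol{\rho} \to \boldsymbol{0}$ within this cone. Hence there is $\eta > 0$ such that, for $|\boldsymbol{\rho}| < 2\eta$ in the cone, $\log Zon_d(e^{-\boldsymbol{\rho}}) \leq 2\kappa_d/(\rho_1\cdots\rho_d) \leq C'|\ttheta|^{-d}$. Here I use that $\rho_j \geq \theta_j/2 \geq c\,\varepsilon|\ttheta|$, by the ratio condition on $\ttheta$. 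One could also avoid the asymptotic statement and use directly the monotone decreasing nature of $\log Zon_d(e^{-\boldsymbol{\rho}})$ in each $\rho_j$. Taking $\eta$ small enough that $\ttheta \in (0,\eta)^d$ forces $|\Re(\boldsymbol{\sigma})| < 2\eta$, we get $\sup_D |\mathcal{L}_d| \leq C'|\ttheta|^{-d}$.

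\textbf{Step 4 (Cauchy's estimates).} For $\boldsymbol{\tau}$ with $|\tau_j - \theta_j| \leq \theta_j/4$, the polydisk of radii $r_j = \theta_j/4$ around $\boldsymbol{\tau}$ lies in $D$. Cauchy's inequality then yields
$$\left|\partial^{\kk}\mathcal{L}_d(\boldsymbol{\tau})\right| \leq \kk!\, \prod_j r_j^{-k_j}\, \sup_D|\mathcal{L}_d| \leq \kk!\,(4/(c\varepsilon))^{\kk\cdot\un}\, C'\, |\ttheta|^{-d-\kk\cdot\un},$$
again using $\theta_j \geq c\,\varepsilon|\ttheta|$.

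The only slightly delicate point is the uniformity in Step 3: obtaining a single $\eta$ and constant from an asymptotic equivalence valid in the cone. This follows directly from the definition of $\sim$ as $\boldsymbol{\rho} \to \boldsymbol{0}$ uniformly in the cone, which is exactly how Lemma~\ref{lemma_derivativ} is stated. Everything else is routine.
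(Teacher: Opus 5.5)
Your argument is correct and is essentially the paper's proof. You bound $|\mathcal{L}_d|$ by $\mathcal{L}_d(\Re\boldsymbol{\tau})$, control this on a polydisk via Lemma~\ref{lemma_derivativ} with $\kk=\boldsymbol{0}$, and conclude with Cauchy's estimates on radii $\theta_j/4$. The paper uses the monotonicity shortcut you mention, reducing to the single point $\frac12\ttheta$, whereas your main route applies the lemma uniformly in the wider cone with parameter $\varepsilon/3$.

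One cosmetic slip: with the Euclidean norm, $\ttheta\in(0,\eta)^d$ gives only $|\Re\boldsymbol{\sigma}|<\frac32\sqrt{d}\,\eta$, not $2\eta$. So the threshold from the asymptotic equivalence must be rescaled, which is harmless.
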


\begin{proof}
Since $|\log(1-z)| \leq -\log(1-|z|)$ for $|z|<1$, and since $|e^{-\boldsymbol{\tau}\cdot\vv}| = e^{-\Re(\boldsymbol{\tau})\cdot\vv}$ for $\vv \in \Z_+^d$, we have $|\mathcal{L}_d(\boldsymbol{\tau})| \leq \mathcal{L}_d(\Re(\boldsymbol{\tau}))$. Moreover, $\mathcal{L}_d$ is non-increasing in each variable on $(0,+\infty)^d$.
Let $\boldsymbol{\tau}$ be such that $|\tau_j - \theta_j| \leq \frac{\theta_j}{4}$ for all $j$. The closed polydisk $D$ centred at $\boldsymbol{\tau}$ with radii $\frac{\theta_j}{4}$ is contained in $\{\Re(\sigma_j) \geq \frac{\theta_j}{2}, \; 1 \leq j \leq d\}$, so that, for all $\boldsymbol{\sigma} \in D$,
$$ |\mathcal{L}_d(\boldsymbol{\sigma})| \leq \mathcal{L}_d\left(\Re(\boldsymbol{\sigma})\right) \leq \mathcal{L}_d\left(\tfrac{1}{2}\ttheta\right) \sim \frac{2^d\,\kappa_d}{\prod_{j=1}^d \theta_j}$$
by Lemma~\ref{lemma_derivativ} with $\kk = \boldsymbol{0}$ (note that $\frac{1}{2}\ttheta$ satisfies the same ratio condition as $\ttheta$). Hence $\sup_D |\mathcal{L}_d| \leq C' |\ttheta|^{-d}$ for $\ttheta$ small enough, where $C'$ only depends on $d$ and $\varepsilon$. Cauchy's estimates on $D$ then give
$$ \left| \frac{\partial^{\kk \cdot \un}}{ \partial \boldsymbol{\tau}^{\kk} } \mathcal{L}_d(\boldsymbol{\tau}) \right| \leq \kk!\, \sup_D |\mathcal{L}_d| \prod_{j=1}^d \left(\frac{4}{\theta_j}\right)^{k_j} \leq \frac{C}{|\ttheta|^{d + \kk\cdot\un}},$$
where $\kk! = k_1!\cdots k_d!$, using $\theta_j \geq \varepsilon\, |\ttheta|/\sqrt{d}$.
\end{proof}

In particular, the derivatives of $\log Zon_d$ with respect to the operators $x_j\frac{\partial}{\partial x_j} = -\frac{\partial}{\partial \tau_j}$ at the point $\xx = e^{-\ttheta + i\aalpha}$, that is $\boldsymbol{\tau} = \ttheta - i\aalpha$, are $O(|\ttheta|^{-d-\kk\cdot\un})$ uniformly for $|\alpha_j| \leq \frac{\theta_j}{4}$.

\subsection{Asymptotic expansion of the univariate generating function}\label{subsection_monovariable_equival}

We now compute an asymptotic expansion of the generating function when all the variables $\theta_i$ are equal and tend to 0, which corresponds to the dominant singularity of the generating function. To this end, we introduce the following notation.

\begin{defn}\label{definition_operator}
Let $\mathrm{M}(\C)$ be the field of meromorphic functions on $\C$. We define the polynomial $P_d(X)$ by
$$P_d(X) = \sum_{\delta = 1}^d  \binom{d}{\delta} 2^{\delta-1} \frac{\prod_{k =1}^{\delta-1}(X -k)}{(\delta-1) !} = p_{d,d-1} {X}^{d-1} + \dots + p_{d,1} {X} + p_{d,0}, $$ 
with the convention $\prod_{k =1}^{0}(X -k) = 1$, and the operator $\mathbf{\Pi}_d$ by
\begin{align*}
   \mathbf{\Pi}_d \colon \mathrm{M}(\C) &\to \mathrm{M}(\C)\\
  \phi &\mapsto p_{d,d-1}\, \phi(\cdot \: - (d-1)) + \dots + p_{d,1}\, \phi(\cdot \: - 1) + p_{d,0}\, \phi.
\end{align*}
\end{defn}

\begin{rem}
The family of polynomials $(P_d(X) )_{d\geq 1}$ satisfies the recurrence $P_{d+2} (X) = \frac{2X}{d+1}P_{d+1}(X) + P_d(X)$, with $P_1({X}) = 1$ and $P_2({X}) = 2 {X}$. As a consequence, $P_d$ is an even polynomial with constant term 1 when $d$ is odd, and an odd polynomial when $d$ is even. Moreover, $P_d$ has degree $d-1$ and leading coefficient $\frac{2^{d-1}}{(d-1)!}$. The first polynomials $P_d$ are given in Table~\ref{tab:polynomials}.
\end{rem}

\begin{table}[h]
    \centering
    \begin{tabular}{|c|c|}
        \hline
        $P_1$ & $1$ \\ 
        \hline
        $P_2$ & $2X$  \\ 
        \hline
        $P_3$ &  $2 X^2 + 1 $\\ 
        \hline
        $P_4$ & $ \frac{4}{3} X^3 + \frac{8}{3} X$\\
        \hline
    \end{tabular}
    \caption{First polynomials $P_d$.}
    \label{tab:polynomials}
\end{table}

The operator naturally arises from the Mellin transform: for $\Re(s) > d$, $\mathbf{\Pi}_d (\zeta)(s) = \sum_{k \geq 1} \frac{P_d(k)}{k^s}$, and this identity extends by meromorphic continuation.
With this notation, we have the following proposition.

\begin{figure}[ht]
    \centering
    \begin{tikzpicture}[xscale=3.2, yscale=0.13, >=stealth]
        % critical strip
        \fill[black!7] (0,-31) rectangle (1,31);
        % axes
        \draw[->, black!60] (-0.8,0) -- (2.1,0) node[right, black] {$\Re(s)$};
        \draw[->, black!60] (0,-32) -- (0,33) node[above, black] {$\Im(s)$};
        % lines Re s = c_2 and Re s = -1/2
        \draw[dashed] (1.5,-31) -- (1.5,-5);
        \draw[dashed] (1.5,1) -- (1.5,31);
        \draw[dashed] (-0.5,-31) -- (-0.5,-5);
        \draw[dashed] (-0.5,1) -- (-0.5,31);
        \node[below] at (-0.5,-0.3) {\small $-\frac{1}{2}$};
        \node[below] at (1.5,-0.3) {\small $c_2$};
        \node[below right] at (1,-0.3) {\small $1$};
        \node[below left] at (0,-0.3) {\small $0$};
        % contour gamma (here A = 0.2)
        \draw[thick, ->, variable=\t, domain=-31:8, samples=80] plot ({1-0.2/ln(2+abs(\t))},{\t});
        \draw[thick, variable=\t, domain=8:31, samples=60] plot ({1-0.2/ln(2+abs(\t))},{\t});
        \draw[thick, ->, variable=\t, domain=31:-8, samples=80] plot ({0.2/ln(2+abs(\t))},{\t});
        \draw[thick, variable=\t, domain=-8:-31, samples=60] plot ({0.2/ln(2+abs(\t))},{\t});
        \node at (0.82,27) {\small $\gamma_r$};
        \node at (0.19,27) {\small $\gamma_l$};
        % first non-trivial zeros
        \foreach \y in {14.1347, 21.0220, 25.0109, 30.4249} {
            \node at (0.5,\y) {\small $\times$};
            \node at (0.5,-\y) {\small $\times$};
        }
        % double pole at 0
        \fill (0,0) circle[x radius=0.025, y radius=0.6];
    \end{tikzpicture}
    \caption{The contour $\gamma = \gamma_r \cup \gamma_l$ (thick lines), surrounding the non-trivial zeros of $\zeta$ (crosses) in the critical strip (shaded). The dashed lines are the lines $\Re(s) = c_2$ and $\Re(s) = -\frac{1}{2}$ used in the proof of Proposition~\ref{prop_equivalent_monovariate}, and the dot is the double pole at $s=0$.}
    \label{fig:bande_critique}
\end{figure}

\begin{prop} \label{prop_equivalent_monovariate}
Let $A>0$ be a constant, small enough so that all the non-trivial zeros of the Riemann $\zeta$ function lie in the region $\left\{ \sigma + it, \; \frac{A}{\log(2 + |t|)} < \sigma < 1 - \frac{A}{\log(2 + |t|)}\right\}$; such a constant exists by \cite[Theorem 3.8]{Titchmarsh:riemann}. Let $\gamma$ be the contour made of the two oriented paths $\gamma_{r}(t) = 1 - \frac{A}{\log(2 + |t|)} + it $, for $t$ going from $- \infty$ to $+\infty$, and $\gamma_{l}(t) = \frac{A}{\log(2 + |t|)} + it $, for $t$ going from $+ \infty$ to $- \infty$ (see Figure~\ref{fig:bande_critique}).\medskip

We define the function $ I_{\text{crit}, d}$ by 
\begin{align}
    I_{\text{crit}, d}(\theta) = \frac{1}{2 i \pi } \int_{\gamma}  \frac{\mathbf{\Pi}_d (\zeta)(s) }{ \zeta(s)}\, \zeta(s+1) \Gamma(s) \theta^{-s} ds .
\end{align}
Then, as $\theta\rightarrow 0^+$,
\begin{align}
    \log \left(Zon_d \left(e^{-\theta}\un \right) \right) = \sum_{\delta=1}^{d-1}\left(p_{d,\delta}\frac{\zeta(\delta+2) \Gamma(\delta+1)}{\zeta(\delta+1) \theta^{\delta+1}} \right) + I_{\text{crit}, d}(\theta) +  C_d(\theta) + O \left( \theta^{1/2}\right),
\end{align}
with $C_d(\theta) = 2 \mathbf{\Pi}_d \left(\log(2\pi) \zeta -  \zeta'\right) (0) + 2 \mathbf{\Pi}_d \left(\zeta\right)(0) \log(\theta) $.
\end{prop}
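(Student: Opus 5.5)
The starting point is Lemma~\ref{lemma_integral_form_gg} with $\ttheta = \theta\un$. This gives
$$\log Zon_d(e^{-\theta}\un) = \frac{1}{2i\pi}\int_{c-i\infty}^{c+i\infty} \frac{\zeta(s+1)\Gamma(s)}{\zeta(s)}\,\theta^{-s}\sum_{\vv \in \Z_+^d\setminus\{\boldsymbol{0}\}} \frac{2^{d(\vv)-1}}{(\un\cdot\vv)^s}\,ds .$$
The first step is to identify the Dirichlet series. We group the vectors according to their support $J$, with $|J| = \delta$, and according to their $L^1$ norm $k$. The number of $\vv \in \Z_{>0}^\delta$ with $\un\cdot\vv = k$ is $\binom{k-1}{\delta-1} = \prod_{j=1}^{\delta-1}(k-j)/(\delta-1)!$. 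Hence the coefficient of $k^{-s}$ is exactly $P_d(k)$, and the Dirichlet series equals $\sum_{k\ge1}P_d(k)k^{-s} = \mathbf{\Pi}_d(\zeta)(s)$. The integrand is therefore
$$G(s) = \frac{\mathbf{\Pi}_d(\zeta)(s)}{\zeta(s)}\,\zeta(s+1)\Gamma(s)\theta^{-s},$$
which is meromorphic.

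Next I shift the contour from $\Re(s)=c>d$ to the left, first down to the line $\Re(s)=c_2\in(1,2)$. The poles crossed come from $\zeta(s-\delta)$ inside $\mathbf{\Pi}_d(\zeta)$, at $s=\delta+1$ for $\delta=1,\dots,d-1$. At $s=\delta+1$ the residue is $p_{d,\delta}\,\zeta(\delta+2)\Gamma(\delta+1)\theta^{-\delta-1}/\zeta(\delta+1)$, which gives the main sum. The factor $1/\zeta(s)$ is regular there, and the term with $\delta=0$ has the pole of $\zeta(s)$ at $s=1$ cancelled by $1/\zeta(s)$.

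Then I move from $\Re(s)=c_2$ to $\Re(s)=-\frac12$. This region contains the non-trivial zeros of $\zeta$ and the point $s=0$. I deform the line into the contour $\gamma$ plus the line $\Re(s)=-\frac12$, in the sense that the region between $\Re(s)=c_2$ and $\gamma_r$, and the region between $\gamma_l$ and $\Re(s)=-\frac12$, are zero-free. The integral around the critical strip is by definition $I_{\text{crit},d}(\theta)$.

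At $s=0$, $\Gamma(s)$ and $\zeta(s+1)$ each contribute a simple pole, so the pole is double. Its residue is computed from the expansions $\Gamma(s)=1/s-\gamma+\dots$, $\zeta(s+1)=1/s+\gamma+\dots$ and $\theta^{-s}=1-s\log\theta+\dots$, together with the derivative at $0$ of $\mathbf{\Pi}_d(\zeta)/\zeta$. Using $\zeta(0)=-\frac12$ and $\zeta'(0)/\zeta(0)=\log(2\pi)$, this produces the constant $2\mathbf{\Pi}_d(\log(2\pi)\zeta-\zeta')(0)+2\mathbf{\Pi}_d(\zeta)(0)\log\theta$. I would verify the sign of the $\log\theta$ term carefully, since $\mathbf{\Pi}_d(\zeta)(0)/\zeta(0)=-2\mathbf{\Pi}_d(\zeta)(0)$ and the $-\log\theta$ coefficient combine. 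No trivial zero of $\zeta$ (at $-2,-4,\dots$) is crossed. The remaining integral on $\Re(s)=-\frac12$ is $O(\theta^{1/2})$ because $|\theta^{-s}|=\theta^{1/2}$ there.

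The main obstacle is justifying all these contour shifts. The horizontal segments must vanish and the integrals must converge on $\gamma$ and on $\Re(s)=-\frac12$. Here $\Gamma(s)$ decays like $e^{-\pi|t|/2}$, while $\zeta(s-j)$ and $\zeta(s+1)$ grow at most polynomially in vertical strips. The factor $1/\zeta(s)$ is controlled on $\gamma_r$ and $\gamma_l$ by the classical bound $1/\zeta(\sigma+it)=O(\log|t|)$ in the zero-free region $\sigma\ge 1-A/\log(2+|t|)$ (Titchmarsh). On $\gamma_l$ I would use the same bound transferred via the functional equation. On $\Re(s)=-\frac12$ the functional equation gives $|1/\zeta(s)|\asymp|t|^{-1}$ up to constants. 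Shifting between $\Re(s)=c_2$ and $\gamma_r$ also requires choosing the horizontal cuts at heights $T_n\to\infty$ on which the polynomial bounds hold; the exponential decay of $\Gamma$ then absorbs everything.
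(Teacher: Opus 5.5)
Your proof is correct. Its skeleton is the paper's: Mellin inversion, residues at $s=\delta+1$, the contour $\gamma$, the double pole at $0$, and the remainder on $\Re(s)=-\frac12$. You differ in how you obtain the integrand and how you justify the deformation.

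\emph{The integrand.} The paper expands the logarithm over primitive vectors grouped by support. It then applies Proposition~\ref{proposition_eulerian} with $\aalpha=\mathbf{0}$ in each dimension $\delta$, together with the harmonic-sum rule, and expands $e^{-\delta\theta}/(1-e^{-\theta})^{\delta}$ as $\sum_n\binom{n-1}{\delta-1}e^{-n\theta}$. You instead specialise Lemma~\ref{lemma_integral_form_gg} to $\ttheta=\theta\un$, where the factor $1/\zeta(s)$ is already produced by completing $\P_{d+}$ to $\Z_+^d\setminus\{\boldsymbol{0}\}$. You then count lattice points on the simplex directly. Both rest on the same identity $\Z_+^d\setminus\{\boldsymbol{0}\}=\bigsqcup_k k\P_{d+}$, and for $\aalpha=\mathbf{0}$ the Eulerian polynomials are trivial, so your derivation is simply shorter.

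\emph{The deformation.} The paper applies the residue theorem to rectangles whose horizontal sides cross the critical strip at Valiron's heights $T_k$. This implicitly identifies $I_{\text{crit},d}$ with the limit of the partial sums of residues at the zeros. You move $\Re(s)=c_2$ onto $\gamma_r$, and $\gamma_l$ onto $\Re(s)=-\frac12$, separately. Every horizontal segment then lies in a zero-free region where $1/\zeta$ grows at most polynomially in $|t|$: it is $O(\log|t|)$ near $\gamma_r$, and polynomially bounded near $\gamma_l$ via the functional equation. These segments therefore vanish along every sequence $T\to\infty$. So the special heights you say are needed are not actually needed in your argument. Valiron's theorem is only required for the sum-over-zeros form (\ref{I_crit_somme}), not for this proposition.

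\emph{The sign check.} Since $\zeta(s+1)\Gamma(s)=s^{-2}+O(1)$, the residue at $0$ is $h'(0)-h(0)\log\theta$ with $h=\mathbf{\Pi}_d(\zeta)/\zeta$. This gives exactly $C_d(\theta)$ with the stated signs.
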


\begin{proof}
We start from the logarithm of $Zon_d(e^{-\theta} \un)$ given by (\ref{generating_function}):
$$ \log\left(  Zon_d(e^{-\theta} \un) \right) = - \sum_{ \vv\in \P_{d+}} 2^{d(\vv)-1} \log \left( 1 - e^{-  \theta\, \un \cdot \vv } \right).  $$
A primitive vector $\vv \in \P_{d+}$ with $d(\vv)=\delta$ is determined by the set of its $\delta$ non-zero coordinates, which can be chosen in $\binom{d}{\delta}$ ways, and by a primitive vector of $\P_{\delta+} \cap \text{int}(\R^\delta_+)$. Expanding the logarithm in series and applying the Fubini--Tonelli theorem, we get
\begin{align*}
\log\left(  Zon_d(e^{-\theta} \un) \right) &= \sum_{\delta = 1}^{d} 2^{\delta-1} \binom{d}{\delta} \sum_{k \geq 1} \sum_{ \vv\in \P_{\delta+}\cap\, \text{int}(\R^\delta_+)} \frac{1}{k} e^{- k \theta\, \un \cdot \vv } \\
&=  \sum_{\delta = 1}^{d} 2^{\delta-1} \binom{d}{\delta} \sum_{k \geq 1} \frac{1}{k} S^{(\delta)}_{\textbf{0}}(e^{-k\theta}).\end{align*}
By the harmonic sum property of the Mellin transform (see \cite[Appendix B.7, p.~762]{Flajolet:AC}), which follows from the Fubini--Tonelli theorem, and by Proposition~\ref{proposition_eulerian} applied with $\aalpha = \textbf{0}$ in dimension $\delta$, we have, for $\Re(s) > d$,
\begin{align*}
\mathcal{M}\left[\log\left(Zon_d\left(e^{-\theta} \un\right)\right)\right](s) &= \sum_{\delta = 1}^d   \binom{d}{\delta} 2^{\delta-1} \zeta(s +1)\, \mathcal{M}\left[S^{(\delta)}_{\textbf{0}}(e^{-\theta})\right](s) \\
&= \sum_{\delta = 1}^d   \binom{d}{\delta} 2^{\delta-1} \frac{\zeta(s +1)}{\zeta(s)}\, \mathcal{M}\left[\frac {  e^{- \delta  \theta}}{(1-e^{-  \theta}) ^{\delta}}  \right](s).\end{align*}
Since $\frac {  e^{- \delta  \theta}}{(1-e^{- \theta}) ^{\delta}} = \sum_{n \geq 1} \binom{n - 1}{\delta - 1} e^{- n \theta }$, using again the harmonic sum property and the notation of Definition~\ref{definition_operator}, the transform simplifies to
\begin{align}
    \mathcal{M}\left[\log \left(Zon_d \left(e^{-\theta} \un \right)\right)\right] (s) &=\frac{1 }{ \zeta(s)} \sum_{\delta = 1}^d  \binom{d}{\delta} 2^{\delta-1} \sum_{n \geq 1}  \frac{\binom{ n - 1}{\delta-1} }{n^s}\, \zeta(s+1) \Gamma(s)  \nonumber  \\
    &=\frac{\mathbf{\Pi}_d (\zeta)(s) }{ \zeta(s)}\, \zeta(s+1) \Gamma(s).
\end{align}
To give a concrete idea of this formula, here are the Mellin transforms in dimensions 2, 3, and 4:
\begin{align*}
    \mathcal{M}\left[\log \left(Zon_2 \left(e^{-\theta} \un\right)\right)\right] (s) &=\frac{2 \zeta(s - 1) }{ \zeta(s)}\, \zeta(s+1) \Gamma(s),\\
    \mathcal{M}\left[\log \left(Zon_3 \left(e^{-\theta} \un \right)\right)\right] (s) &=\frac{2 \zeta(s - 2) + \zeta(s) }{ \zeta(s)}\, \zeta(s+1) \Gamma(s),\\
    \mathcal{M}\left[\log \left(Zon_4 \left(e^{-\theta} \un \right)\right)\right] (s) &=\frac{ \frac{4}{3}\zeta(s -3) + \frac{8}{3}\zeta(s -1) }{ \zeta(s)}\, \zeta(s+1) \Gamma(s).
\end{align*}

This function extends to a meromorphic function on $\C$, and we use the residue theorem iteratively to collect, from right to left, the poles of the Mellin transform in the inverse Mellin formula. On the real line, these poles are located at the integers $\delta+1$ such that $1 \leq \delta \leq d-1$ and $p_{d,\delta} \neq 0$, and at $0$. Note that $s=1$ is not a pole, since the pole of $p_{d,0}\,\zeta(s)$ is cancelled by the zero of $1/\zeta(s)$. Because of the denominator, the non-trivial zeros of the $\zeta$ function are also poles of the transform. \medskip 
 
To begin with, the Mellin inversion formula holds for $c> d$ and gives
$$ \log(Zon_d (e^{-\theta} \un )) = \frac{1}{2i\pi} \int_{c - i \infty}^{c + i \infty}\frac{\mathbf{\Pi}_d (\zeta)(s) }{ \zeta(s)}\, \zeta(s+1)\Gamma(s)  \theta^{-s}  d s.$$

Since $s= d$ is a simple pole of the Mellin transform, the residue theorem allows us to shift the line of integration to the left: for all $c_1\in (d-1, d)$,
$$ \log(Zon_d (e^{-\theta} \un )) = \frac{p_{d, d-1} \zeta(d +1) \Gamma(d) }{\zeta(d) \theta^{d}} + \frac{1}{2i\pi} \int_{c_1- i\infty}^{c_1+ i\infty} \frac{\mathbf{\Pi}_d (\zeta)(s) }{ \zeta(s)}\, \zeta(s+1) \Gamma(s) \theta^{-s} d s.$$
We repeat the process until $1 < c_2 <2$. Then recall the contour $\gamma$ defined in the statement of the proposition. On $\gamma$, $|\theta^{-s}| \leq \theta^{-1}$ for $\theta < 1$, $\Gamma$ decays exponentially, and $1/\zeta(s) = O(\log|\Im(s)|)$ on $\gamma_r$ for $A$ small enough \cite[Section 3.11]{Titchmarsh:riemann}; on $\gamma_l$, the functional equation $\zeta(s) = \chi(s)\zeta(1-s)$, with $|\chi(\sigma+it)| \asymp |t|^{1/2-\sigma}$, shows that $1/\zeta(s)$ also grows at most polynomially in $|\Im(s)|$. Hence the integral defining $I_{\text{crit},d}(\theta)$ converges absolutely. The same holds on the line $\Re(s) = -\frac{1}{2}$, where $|\zeta(s)| \asymp |\Im(s)|\,|\zeta(\frac{3}{2} - s)|$ is bounded from below by a positive multiple of $|\Im(s)|$ for $|\Im(s)| \geq 1$.\medskip

We use the following result of Valiron \cite[Theorem 9.7]{Titchmarsh:riemann}:
there exist $\alpha >0$ and a sequence $(T_k)$ such that, for all $ k \in \N$, $ k < T_k < k + 1$  and $|\zeta(s)| > T_k^{-\alpha}$ for all $s$ such that  $ -1 \leq \Re(s) \leq 2$  and $|\Im(s)| = T_k$. Applying the residue theorem to the positively oriented rectangle with vertices
$\frac{3}{2} \pm i T_k$ and $-\frac{1}{2} \pm i T_k$, and letting $k$ tend to $+\infty$, the contribution of the horizontal segments tends to $0$. As a result, the integral on the line $\Re(s) = c_2$ is equal to the sum of the integral on the line $\Re(s) = - \frac{1}{2}$, of the residue at $0$, and of $I_{\text{crit},d}(\theta)$. The residue at the double pole $s=0$ is $C_d(\theta)$, using $\zeta(0) = -\frac{1}{2}$ and $\zeta'(0) = -\frac{1}{2}\log(2\pi)$. We get
\begin{align*}
    \log(Zon_d (e^{-\theta} \un )) =  &\sum_{\delta=1}^{d-1}\left(p_{d,\delta}\frac{\zeta(\delta+2) \Gamma(\delta+1)}{\zeta(\delta+1) \theta^{\delta+1}} \right) + I_{\text{crit}, d} (\theta)+ C_d(\theta) \\
    &+\frac{1}{2i\pi} \int_{ -\frac{1}{2}- i \infty}^{- \frac{1}{2} + i \infty} \frac{\mathbf{\Pi}_d (\zeta)(s) }{\zeta(s)}\,\zeta(s+1)\Gamma(s)  \theta^{-s} {d}s. 
\end{align*}
Since $|\theta^{-s}| = \theta^{\frac{1}{2}}$ on the line $\Re(s) = - \frac{1}{2}$, the last integral is $O \left(\theta^{\frac{1}{2}}\right)$.
\end{proof}

%%%%%%%%%%%%%%%%%%%%%%%%%%%%%%%%%%%%%%%%%%%%%%%%%%%

\section{Proof of the main theorem}\label{section_proof_main}

The proof of Theorem~\ref{theorem_1} relies on the saddle-point method in $d$ dimensions (see \cite[Theorem VIII.3]{Flajolet:AC} for the one-dimensional case). The saddle-point method extracts the coefficients of a series from Cauchy's integral formula. Recall that $Zon_d$ is analytic on the open unit polydisk. Hence, for all $\nn \in \N^d$ and all $\ttheta \in (0,+\infty)^d$, denoting by $[\xx^{\nn}]F(\xx)$ the coefficient of $\xx^{\nn}$ in the series $F$, we have
$$[\xx^{\nn}]\,Zon_d (\xx) = \frac{1}{(2 i \pi)^d} \oint_{|x_1| = e^{-\theta_1}} \cdots \oint_{|x_d| = e^{-\theta_d}} \frac{Zon_d (\xx)}{x_1^{n_1+1}\cdots x_d^{n_d+1}}\, dx_1\cdots dx_d.$$
The point of the saddle-point method is to choose $\ttheta$ such that the integral is asymptotically a Gaussian integral that can be computed. In this context, the Gaussian approximation (Proposition~\ref{definition_gittenberger}) is closely related to the probabilistic local limit theorem (see \cite{Bureaux:LLT}). \medskip

\subsection{Admissibility of a multivariate saddle-point method}

Since the generating function is studied on the Cartesian product of $d$ open unit complex disks, we write, for $1 \leq j \leq d$, $x_j = e^{-\theta_j + i \alpha_j}$, with $\theta_j >0$ and $\alpha_j \in [- \pi, \pi]$. In vector notation, $\xx = e^{- \ttheta + i \aalpha}$. With this parametrization, Cauchy's formula becomes
$$[\xx^{\nn}]\,Zon_d (\xx) = \frac{e^{\nn\cdot \ttheta}}{(2 \pi)^d} \int_{[-\pi,\pi]^d} Zon_d\left(e^{- \ttheta + i \aalpha}\right) e^{-i\,\nn \cdot \aalpha}\, d\aalpha. $$

In what follows, the notation $\xx \rightarrow \textbf{l}$ means that each component $x_i$ tends to $l_i$. We denote by $\ba(\xx) = \left(a_i(\xx ) \right)_{1 \leq i\leq d}$, $\BB (\xx) = \left(B_{i,j}(\xx )\right)_{1 \leq i,j \leq d}$, and $\CC(\xx) = \left(C_{i,j,k}(\xx )\right)_{1 \leq i, j, k \leq d}$ the families of first, second, and third order partial derivatives of $\log Zon_d$ with respect to the operators $x_j \frac{\partial}{\partial x_j}$. For instance, for $1 \leq j \leq d$,
$$a_j(\xx) = \frac{x_j \frac{\partial  }{\partial x_j }Zon_d(\xx)}{Zon_d(\xx) }. $$
With $\xx = e^{-\ttheta}$, we have $a_j(e^{-\ttheta}) = - \frac{\partial}{\partial \theta_j} \log Zon_d(e^{-\ttheta})$, $B_{i,j}(e^{-\ttheta}) = \frac{\partial^2}{\partial \theta_i \partial\theta_j} \log Zon_d(e^{-\ttheta})$, and so on.
Recall that $p_{\nn}$ is the number of zonotopes inscribed in the box $[0,n_1] \times \dots \times [0,n_d]$. The following result is often called the H-admissibility property.

\begin{prop}\label{definition_gittenberger}
 As $\ttheta \rightarrow \boldsymbol{0}$ with $\theta_1 = \dots = \theta_d$, we have, uniformly for $\nn \in \N^d$,
 $$p_{\nn} = \frac{Zon_d(e^{- \ttheta})}{e^{- \nn \cdot \ttheta} \sqrt{(2 \pi)^d \det \BB( e^{- \ttheta})}}\left( \exp\left( - \frac{1}{2}(\ba(e^{- \ttheta}) - \nn)^{\intercal}\BB(e^{- \ttheta})^{-1}(\ba(e^{- \ttheta}) - \nn) \right) + o(1) \right). $$
\end{prop}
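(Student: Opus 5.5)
We start from the Cauchy formula written above,
$$p_{\nn} = \frac{e^{\nn\cdot\ttheta}}{(2\pi)^d}\int_{[-\pi,\pi]^d} Zon_d(e^{-\ttheta+i\aalpha})e^{-i\nn\cdot\aalpha}\,d\aalpha,$$
with $\theta_1=\dots=\theta_d=\theta\to0$. The integration domain is split into a central region $\mathcal{C}=\{|\alpha_j|\le \theta^{1+\lambda}\}$ and its complement, where $\lambda$ is chosen in $(\frac{d}{3}\cdot\frac{1}{?},\dots)$ so that two constraints hold. First, the quadratic form $\aalpha^\intercal\BB\aalpha$ (of order $\theta^{-d-2}|\aalpha|^2$ by Lemma~\ref{lemma_derivativ}) must tend to infinity on the boundary of $\mathcal{C}$, which requires $\theta^{2+2\lambda-d-2}\to\infty$, i.e. $2\lambda<d$. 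Second, the cubic remainder, of order $\theta^{-d-3}|\aalpha|^3$ by Lemma~\ref{lemma_complex_points}, must vanish on $\mathcal{C}$, which requires $3\lambda>d$. Thus any $\lambda\in(d/3,d/2)$ works. Since $\theta^{1+\lambda}\le\theta/4$ for small $\theta$, Lemma~\ref{lemma_complex_points} applies on $\mathcal{C}$.

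On $\mathcal{C}$, we Taylor expand $\mathcal{L}_d(\ttheta-i\aalpha)$ to order three:
$$\log Zon_d(e^{-\ttheta+i\aalpha}) = \log Zon_d(e^{-\ttheta}) + i\,\ba\cdot\aalpha - \tfrac12\aalpha^\intercal\BB\aalpha + O(\theta^{-d-3}|\aalpha|^3),$$
with the remainder uniformly $o(1)$. The integrand then becomes $Zon_d(e^{-\ttheta})\,e^{i(\ba-\nn)\cdot\aalpha-\frac12\aalpha^\intercal\BB\aalpha}(1+o(1))$, where the $o(1)$ is uniform in $\nn$ because $|e^{i(\ba-\nn)\cdot\aalpha}|=1$. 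We then extend the Gaussian integral to $\R^d$. The tail outside $\mathcal{C}$ is bounded by $\int_{|\aalpha|\ge\theta^{1+\lambda}}e^{-c\theta^{-d-2}|\aalpha|^2}$, which is $o\bigl((\det\BB)^{-1/2}\bigr)$ because $\BB$ is comparable to $\theta^{-d-2}$ times the positive definite matrix $\kappa_d(1+\delta_{ij})$ from Lemma~\ref{lemma_derivativ}. The Fourier transform of the Gaussian then gives exactly
$$\frac{(2\pi)^{d/2}}{\sqrt{\det\BB}}\exp\bigl(-\tfrac12(\ba-\nn)^\intercal\BB^{-1}(\ba-\nn)\bigr).$$
The absolute errors are $o((\det \BB)^{-1/2})$ uniformly in $\nn$, which yields the additive $o(1)$ in the statement.

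The main obstacle is the complementary region $[-\pi,\pi]^d\setminus\mathcal{C}$, where we must show that $|Zon_d(e^{-\ttheta+i\aalpha})|/Zon_d(e^{-\ttheta}) = o(\theta^{(d+2)d/2})$ uniformly. We would write
$$\log\frac{Zon_d(e^{-\ttheta})}{|Zon_d(e^{-\ttheta+i\aalpha})|} = \sum_{\vv,m}\frac{2^{d(\vv)-1}}{m}e^{-m\ttheta\cdot\vv}\bigl(1-\cos(m\aalpha\cdot\vv)\bigr) \ge \sum_{\vv\in\P_{d+}}e^{-\theta\,\un\cdot\vv}\bigl(1-\cos(\aalpha\cdot\vv)\bigr),$$
keeping only the $m=1$ terms, and prove that the right-hand side is at least $c\,\theta^{-\mu}$ for some $\mu>0$ whenever some $|\alpha_j|\ge\theta^{1+\lambda}$. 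The argument splits into two cases. For moderate $\aalpha$ (all $|\alpha_j|\le\theta/4$), the lower bound follows from the quadratic estimate above, giving at least $\theta^{-d-2}\theta^{2+2\lambda}=\theta^{2\lambda-d}\to\infty$. For large $\aalpha$, we restrict to primitive vectors $\vv$ with $\un\cdot\vv\le 1/\theta$, of which there are $\asymp\theta^{-d}$. Their weights are $e^{-\theta\un\cdot\vv}\ge e^{-1}$, and we need a positive proportion of them to satisfy $\|\aalpha\cdot\vv/2\pi\|\ge c$. We would get this by considering the pairs $\vv$ and $\vv+\mathbf{e}_j$ (still essentially primitive up to a positive density, handled via Möbius inversion), since $\aalpha\cdot(\vv+\mathbf{e}_j)-\aalpha\cdot\vv=\alpha_j$ is bounded away from $2\pi\Z$ by at least $c\theta^{1+\lambda}$. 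The resulting bound is $\gtrsim\theta^{-d}\min(1,\theta^{2+2\lambda}\cdot\text{scale})$, which gives a super-polynomial factor $e^{-c\theta^{-\mu}}$ and therefore a negligible contribution. The equidistribution step over primitive vectors is the delicate part; if it proves troublesome, we would instead fall back on the non-primitive bound, using that the sum over all $\Z_+^d$ restricted to $m=1$ for $\vv=k\mathbf{e}_j$ already gives $\sum_k e^{-k\theta}(1-\cos k\alpha_j)\gtrsim\theta^{-1}$ for $|\alpha_j|\ge\theta$.
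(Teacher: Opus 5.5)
Your central-region argument is correct and essentially the paper's, which checks conditions (I)--(V) and invokes Gittenberger--Mandlburger's Theorem~4. Your cube $\{|\alpha_j|\le\theta^{1+\lambda}\}$ with $\lambda\in(d/3,d/2)$ plays the role of $\Delta(\ttheta)$ with $\beta=1+\lambda$. The Taylor, tail and Fourier steps are fine, and so is the uniformity in $\nn$. Your reduction $\log\bigl(Zon_d(e^{-\ttheta})/|Zon_d(e^{-\ttheta+i\aalpha})|\bigr)\ge\sum_{\vv}e^{-\theta\un\cdot\vv}(1-\cos\aalpha\cdot\vv)$ is also valid.

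The gap is in the complementary region at the scale $\max_j|\alpha_j|\asymp\theta$. First, Taylor only gives the lower bound $c\,\theta^{-d-2}|\aalpha|^2$ for $|\aalpha|\le c_0\theta$ with $c_0$ small, not up to $\theta/4$: the remainder $O(\theta^{-d-3}|\aalpha|^3)$ is of the same order as $\frac12\aalpha^\intercal\BB\aalpha$ when $|\aalpha|\asymp\theta$. This is why the paper takes $c_0\le\min(\frac14,\frac{c_1}{4c_2})$, and it means your large regime must handle $\max_j|\alpha_j|\ge c_0\theta$.

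Second, and more seriously, the pair trick $\vv,\vv+\mathbf{e}_j$ does not give a positive proportion of $\vv$ with $\|\aalpha\cdot\vv/2\pi\|\ge c$. From $(1-\cos x)+(1-\cos(x+\alpha_j))\ge\sin^2(\alpha_j/2)$ you only get a total $\gtrsim\theta^{-d}\alpha_j^2$. At $|\alpha_j|\asymp\theta$ this is $\asymp\theta^{2-d}$, which is bounded when $d=2$. It therefore cannot beat $\sqrt{\det\BB}\asymp\theta^{-d(d+2)/2}=\theta^{-4}$. With the threshold $\theta^{1+\lambda}$ you actually wrote, the bound is $\theta^{2+2\lambda-d}\to0$ for every $d\le5$. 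Your undetermined ``scale'' is exactly the missing factor $\theta^{-2}$, which differences of step one cannot see.

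Your fallback does not repair this. For $k\ge2$, $k\mathbf{e}_j$ is not primitive, so in the expansion of $\log Zon_d$ it appears only as $\vv=\mathbf{e}_j$ with $m=k$ and weight $1/k$. That contribution is
$\sum_k\frac1k e^{-k\theta}(1-\cos k\alpha_j)=\frac12\log\bigl(1+4e^{-\theta}\sin^2(\alpha_j/2)/(1-e^{-\theta})^2\bigr)=O(\log(1/\theta))$, not $\gtrsim\theta^{-1}$.

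The missing idea is a long arithmetic progression of genuinely primitive vectors, as in the second regime of Lemma~\ref{condition2}. If $|\alpha_1|=r\ge c_0\theta$, take $\vv_i=(i,1,0,\dots,0)$ for $1\le i\le M$ with $M\asymp1/\theta$. Their weights are bounded below, and their phases $i\alpha_1+\alpha_2$ advance by $\alpha_1$ over a total range $\gtrsim c_0$. Hence a positive proportion of them stay at a fixed distance from $2\pi\Z$. The paper counts good indices; alternatively, $\sum_{i\le M}(1-\cos(i\alpha_1+\alpha_2))\ge M-1/|\sin(\alpha_1/2)|$ with $M=\lfloor K/\theta\rfloor$ and $K$ large. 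This gives $U_{\ttheta,\aalpha}\ge c/\theta$, i.e.\ a factor $e^{-c'/\theta}$, uniformly for $r\ge c_0\theta$ and every $d\ge2$.

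For $d\ge3$ your pair bound $\gtrsim\theta^{2-d}\ge\theta^{-1}$ would in fact suffice once the regimes are split at $c_0\theta$. The proposition, however, includes $d=2$.
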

This proposition relies on a few technical lemmas, which check the five conditions (labelled in the lemmas) of \cite[Definition 2]{Gittenberger:hadmi}. Gittenberger and Mandlburger consider entire functions, and let the radii $\boldsymbol{r}$ tend to $(+\infty,\dots,+\infty)$; however, the proof of \cite[Theorem 4]{Gittenberger:hadmi} only uses Cauchy's integral formula on the torus of radii $\boldsymbol{r}$, conditions (I) and (II) at the point $\boldsymbol{r}$, and \cite[Lemma 1]{Gittenberger:hadmi}, which follows from (I), (II), and (III). It therefore applies verbatim to a function analytic in the open unit polydisk, along any path $\boldsymbol{r} \rightarrow \un$ on which these conditions hold; here, the path is the diagonal $\boldsymbol{r} = e^{-\theta}\un$, $\theta \rightarrow 0^+$, and all the statements below are only needed (and stated) for $\theta_1 = \dots = \theta_d$, although the proofs work as soon as the ratios $\theta_i/\theta_j$ remain bounded. As in \cite[Definition 2]{Gittenberger:hadmi}, the domain $\Delta(\ttheta)$ on which the Gaussian approximation holds is a cuboid spanned by an orthonormal basis of eigenvectors of $\BB(e^{-\ttheta})$.

\begin{lem}\label{condition1} (\textbf{I}) $\BB(e^{-\ttheta})$ is positive definite.\medskip

Let $\beta $ be a real number in the interval $ \left(1 + \frac{d}{3} ,1 + \frac{d}{2}\right)$, let $\boldsymbol{w}_1(\ttheta),\dots,\boldsymbol{w}_d(\ttheta)$ be an orthonormal basis of eigenvectors of $\BB(e^{-\ttheta})$, and define the cuboid
$$\Delta (\ttheta ) = \left\{ \sum_{j=1}^d \mu_j \boldsymbol{w}_j(\ttheta) \; \middle| \; |\mu_j| \leq \left(\underset{1 \leq k \leq d}{\max}\, \theta_k \right)^\beta \text{ for } 1 \leq j\leq d \right\}.$$
Then, uniformly for $\aalpha \in \Delta (\ttheta )$, as $\ttheta \rightarrow \boldsymbol{0}$,
$$Zon_d(e^{- \ttheta + i \aalpha}) =
Zon_d(e^{- \ttheta}) 
\exp \left( i\aalpha^{\intercal} \ba(e^{- \ttheta}) -
\frac{\aalpha^{\intercal} \BB(e^{- \ttheta}) \aalpha }{2 }
\right)(1 + o(1)).$$
\end{lem}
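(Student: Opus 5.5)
Two claims need proof: positive definiteness of $\BB(e^{-\ttheta})$, and the uniform second-order Taylor approximation of $\log Zon_d$ on the cuboid $\Delta(\ttheta)$.

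\textbf{Positive definiteness.} It can be seen directly from (\ref{generating_function}). Writing $\log Zon_d(e^{-\ttheta}) = -\sum_{\vv\in\P_{d+}} 2^{d(\vv)-1}\log(1-e^{-\ttheta\cdot\vv})$ and differentiating twice in $\ttheta$, we get
\[
\uu^{\intercal}\BB(e^{-\ttheta})\uu = \sum_{\vv\in\P_{d+}} 2^{d(\vv)-1}\,\frac{e^{-\ttheta\cdot\vv}}{(1-e^{-\ttheta\cdot\vv})^2}\,(\uu\cdot\vv)^2 .
\]
Each term is nonnegative. The sum is strictly positive for $\uu\neq\boldsymbol{0}$, because the basis vectors $\boldsymbol{e}_1,\dots,\boldsymbol{e}_d$ belong to $\P_{d+}$. (Alternatively, Lemma~\ref{lemma_derivativ} gives $\BB(e^{-\ttheta})\sim \kappa_d\,(\theta_1\cdots\theta_d)^{-1}\,\mathrm{diag}(2/\theta_j^2)+\text{(off-diagonal of the same order)}$, which we then must check is definite. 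The direct formula is cleaner.)

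\textbf{Taylor expansion.} Put $\boldsymbol{\tau}=\ttheta-i\aalpha$, so that $Zon_d(e^{-\ttheta+i\aalpha})=\exp(\mathcal{L}_d(\boldsymbol{\tau}))$. We expand $\mathcal{L}_d$ around $\ttheta$ to second order with integral remainder along the segment $[\ttheta,\ttheta-i\aalpha]$. Since $x_j\partial_{x_j}=-\partial_{\tau_j}$, the first- and second-order terms are exactly $i\aalpha^{\intercal}\ba(e^{-\ttheta})$ and $-\frac12\aalpha^{\intercal}\BB(e^{-\ttheta})\aalpha$. It remains to show that the remainder $R$ satisfies $R=o(1)$ uniformly on $\Delta(\ttheta)$, using
\[
|R|\le \frac{1}{6}\sup_{\boldsymbol{\sigma}\in[\ttheta,\ttheta-i\aalpha]}\ \sum_{i,j,k}\left|\partial^3_{ijk}\mathcal{L}_d(\boldsymbol{\sigma})\right|\,|\alpha_i||\alpha_j||\alpha_k|.
\]
For $\aalpha\in\Delta(\ttheta)$ and $\theta_1=\dots=\theta_d=\theta$, we have $|\aalpha|\le\sqrt d\,\theta^\beta$ with $\beta>1$, so $|\alpha_j|\le\theta_j/4$ for $\theta$ small. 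Lemma~\ref{lemma_complex_points} therefore applies on the whole segment and bounds the third derivatives by $C|\ttheta|^{-d-3}$. This gives $|R|=O(\theta^{3\beta-d-3})$, which tends to $0$ precisely because $\beta>1+\frac d3$. The other condition $\beta<1+\frac d2$ is not needed here; it is used for the tail estimate (the Gaussian factor $\aalpha^{\intercal}\BB\aalpha\asymp\theta^{2\beta-d-2}\to\infty$ on $\partial\Delta$), which belongs to the subsequent conditions.

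\textbf{Main obstacle.} The only delicate point is ensuring that the whole segment $[\ttheta,\ttheta-i\aalpha]$ stays in the polydisk where Lemma~\ref{lemma_complex_points} gives uniform bounds, and that the constants are uniform in $\aalpha$. The condition $\beta>1$ handles the first requirement. The orthonormal change of basis $\aalpha=\sum\mu_j\boldsymbol{w}_j$ only affects the norm by a factor at most $\sqrt d$, so uniformity follows.
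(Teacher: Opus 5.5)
Your proof is correct. The Taylor half is the paper's argument: you bound the third derivatives of $\mathcal{L}_d$ by $C|\ttheta|^{-d-3}$ using Lemma~\ref{lemma_complex_points}, use $\beta>1$ to keep the segment $[\ttheta,\ttheta-i\aalpha]$ inside the polydisk $|\tau_j-\theta_j|\le\theta_j/4$, and use $3\beta>d+3$ to make the remainder vanish. Your integral form of the remainder is the right tool for a complex-valued function. The paper invokes the Lagrange form, which strictly speaking must be replaced by a bound of exactly your type here.

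Where you genuinely differ is (I). The paper deduces positive definiteness from the entrywise equivalent $\BB(e^{-\ttheta})\sim\frac{\kappa_d}{\prod_k\theta_k}D^{-1}(I_d+J_d)D^{-1}$ given by Lemma~\ref{lemma_derivativ}, so it only gets it for $\ttheta$ small. You instead write the Hessian as the positive combination $\sum_{\vv}2^{d(\vv)-1}\frac{e^{-\ttheta\cdot\vv}}{(1-e^{-\ttheta\cdot\vv})^2}\,\vv\vv^{\intercal}$ of rank-one matrices, and use that the canonical basis vectors are primitive. Termwise differentiation is fine by normal convergence on compact subsets of $(0,+\infty)^d$. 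This route is elementary, bypasses the Mellin machinery, and gives positive definiteness for every $\ttheta\in(0,+\infty)^d$; it is the same mechanism the paper uses for (V).

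What the paper's route buys is quantitative information. The equivalent shows that the smallest eigenvalue of $\BB$ is of order $\theta^{-(d+2)}$. That order is what is needed for $\det\BB$ and for the lower bound $\aalpha^{\intercal}\BB\aalpha\ge c_1\theta^{-(d+2)}r^2$ in the first regime of Lemma~\ref{condition2}. Keeping only the basis vectors, your formula yields just a lower bound of order $\theta^{-2}$. Since the paper computes that equivalent anyway, it gets (I) for free; as a standalone proof of (I), yours is cleaner and more general.
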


\begin{proof}
By Lemma~\ref{lemma_derivativ}, we have the following asymptotic equivalents of $\ba(e^{- \ttheta})$, $\BB (e^{- \ttheta})$, and $\CC(e^{- \ttheta})$, where $\delta_{i,j}$ denotes Kronecker's symbol and $\delta_{i,j,k} =1 $ if $i = j = k $ and 0 otherwise:
    \begin{align}\label{equivalent_aa}
        a_i(e^{- \ttheta}) \underset{\ttheta \rightarrow \boldsymbol{0}}{\sim} \kappa_d \: \frac{1}{\theta_i  \prod_{j = 1}^d \theta_j},
    \end{align}
    \begin{align}\label{equivalent_BB}
       B_{i,j} (e^{- \ttheta}) \underset{\ttheta \rightarrow \boldsymbol{0}}{\sim} (1 + \delta_{i,j}) \: \kappa_d \: \frac{1}{\theta_i \theta_j  \prod_{k = 1}^d \theta_k},
    \end{align}
    $$  C_{i,j,k} (e^{- \ttheta}) \underset{\ttheta \rightarrow \boldsymbol{0}}{\sim} (1 + \delta_{i,j} + \delta_{j,k} + \delta_{k, i} + 2 \delta_{i,j,k})  \: \kappa_d \:\frac{1}{\theta_i \theta_j \theta_k  \prod_{l = 1}^d \theta_l}. $$
In particular, $\BB(e^{-\ttheta})$ is asymptotically equivalent to $\frac{\kappa_d}{\prod_k \theta_k} D^{-1}(I_d + J_d)D^{-1}$, where $D = \operatorname{diag}(\theta_1,\dots,\theta_d)$, $I_d$ is the identity matrix and $J_d$ the all-ones matrix; since $I_d + J_d$ is positive definite, so is $\BB(e^{-\ttheta})$ for $\ttheta$ small enough.
To conclude with the Lagrange form of Taylor's theorem, it suffices to check that, for any $t \in (0,1)$,
$$ C_{i,j,k}(e^{-\ttheta + i t \aalpha})\, \alpha_i \alpha_j \alpha_k \underset{\ttheta \rightarrow \boldsymbol{0}}{\longrightarrow} 0, \text{ uniformly for }\aalpha \in \Delta(\ttheta),$$
which holds since, for $\aalpha \in \Delta(\ttheta)$, $\|\aalpha\| \leq \sqrt{d}\,(\max_k \theta_k)^\beta$; this is at most $\frac{1}{4}\min_j\theta_j$ for $\ttheta$ small, as $\beta > 1$, so that Lemma~\ref{lemma_complex_points} gives $C_{i,j,k}(e^{-\ttheta + it\aalpha}) = O(|\ttheta|^{-(d+3)})$, and $3\beta > d+3$.
\end{proof}

\begin{lem}\label{condition2} (\textbf{II})
As $\ttheta \rightarrow \boldsymbol{0}$, uniformly for $\aalpha \in [-\pi,\pi]^d \setminus  \Delta (\ttheta)$,
$$|Zon_d(e^{ - \ttheta + i \aalpha})| = o \left( \frac{Zon_d(e^{-\ttheta})}{\sqrt{ \det \BB(e^{-\ttheta})}} \right).$$
\end{lem}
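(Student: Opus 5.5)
We compare $\log|Zon_d(e^{-\ttheta+i\aalpha})|$ with $\log Zon_d(e^{-\ttheta})$ through the product formula (\ref{generating_function}). For $\theta_1=\dots=\theta_d=\theta$, we have $\sqrt{\det\BB(e^{-\ttheta})}\asymp\theta^{-(3d/2)}$ by (\ref{equivalent_BB}), so $\log\sqrt{\det \BB}=O(\log(1/\theta))$. It therefore suffices to show that
$$\log Zon_d(e^{-\ttheta}) - \log|Zon_d(e^{-\ttheta+i\aalpha})| \geq c\,\theta^{-\epsilon}$$
for some $c,\epsilon>0$, uniformly for $\aalpha\in[-\pi,\pi]^d\setminus\Delta(\ttheta)$.

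Using the series for $\mathcal{L}_d$, this difference equals
$$\sum_{\vv\in\P_{d+}}2^{d(\vv)-1}\sum_{m\ge1}\frac{e^{-m\theta\,\un\cdot\vv}}{m}\bigl(1-\cos(m\,\aalpha\cdot\vv)\bigr)\;\geq\;\sum_{\vv\in\P_{d+}}e^{-\theta\,\un\cdot\vv}\bigl(1-\cos(\aalpha\cdot\vv)\bigr),$$
where we keep only $m=1$; every term is nonnegative. We then split into two regimes.

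\emph{Small $\aalpha$: $\|\aalpha\|\le c_0\theta$ but $\aalpha\notin\Delta(\ttheta)$.} Here $\|\aalpha\|\gtrsim\theta^\beta$, since the cuboid contains a ball of radius $\theta^\beta$. For $\vv$ with $\un\cdot\vv\le 1/\theta$ we have $|\aalpha\cdot\vv|\le c_0\sqrt d$, so $1-\cos(\aalpha\cdot\vv)\ge c(\aalpha\cdot\vv)^2$. The sum is then at least $c\,\aalpha^\intercal M\aalpha$, where $M=\sum_{\vv\in\P_{d+},\,\un\cdot\vv\le1/\theta}\vv\vv^\intercal e^{-\theta\un\cdot\vv}$. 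A lattice-point count using the density $1/\zeta(d)$ of primitive vectors gives $M\succeq c\,\theta^{-d-2}I_d$. Hence the lower bound is $\gtrsim\theta^{2\beta-d-2}$, which tends to infinity polynomially because $\beta<1+d/2$. This is exactly where the upper constraint on $\beta$ is used.

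\emph{Large $\aalpha$: $c_0\theta\le\|\aalpha\|_\infty\le\pi$.} We restrict to the vectors $\vv=\uu+\boldsymbol e_j$ and $\vv=\uu$, with $\uu$ in a box of side $1/\theta$ and a fixed coordinate $j$ where $|\alpha_j|\ge c_0\theta/\sqrt d$. Since
$$\bigl(1-\cos(x)\bigr)+\bigl(1-\cos(x+\alpha_j)\bigr)\ge c\bigl(1-\cos(\alpha_j/2)\bigr)\gtrsim\theta^2,$$
pairing these vectors gives a contribution of order $\theta^2\theta^{-d}=\theta^{2-d}$. This tends to infinity only when $d\ge3$. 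For general $d$ we need more: we use pairs along all lengths $k\le 1/\theta$, i.e.\ $\uu$ and $\uu+k\boldsymbol e_j$. Averaging over $k$ gives $1-\cos$ of order $1$ once $|\alpha_j|\gtrsim\theta$, except that near $\alpha_j\equiv0$ it is of order $\min(1,(k\alpha_j)^2)$. This yields a lower bound $\gtrsim\theta^{-d}\min(1,\cdot)$, which is polynomially large.

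The main obstacle is the primitivity restriction: the pairing arguments need both $\uu$ and its shift to be primitive. We would handle this by summing over all of $\Z_+^d$ with a Möbius inversion, or by noting that a positive proportion of pairs are simultaneously primitive (the density of coprime pairs is positive). Bounding these counts from below uniformly in $\aalpha$, including $\aalpha$ near other rational points where $\aalpha\cdot\vv\in2\pi\Z$ for many $\vv$, is the delicate step. There we would exploit that $\aalpha\cdot\boldsymbol e_j$ and $\aalpha\cdot(\boldsymbol e_j+\boldsymbol e_k)$ cannot all be close to $0$ modulo $2\pi$ unless $\aalpha$ is near $\boldsymbol0$.
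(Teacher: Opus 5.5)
Your reduction is the same as the paper's. Keeping $m=1$ in the series for $\mathcal{L}_d$ bounds $\log Zon_d(e^{-\ttheta})-\log|Zon_d(e^{-\ttheta+i\aalpha})|$ from below by $\sum_{\vv} e^{-\theta\un\cdot\vv}(1-\cos(\aalpha\cdot\vv))$, which is essentially the paper's $U_{\ttheta,\aalpha}$, and it then suffices to make this polynomially large. Two small slips: $\det\BB(e^{-\theta\un})\asymp\theta^{-d(d+2)}$, not $\theta^{-3d}$, which is harmless since only $\log\det\BB=O(\log(1/\theta))$ is used; and your two regimes should be stated with the same norm. Your small-$\aalpha$ regime is a valid variant of the paper's. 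The paper Taylor-expands $\log Zon_d$ and controls the cubic remainder with Lemma~\ref{lemma_complex_points}. You instead use $1-\cos x\geq cx^2$ on the primitive vectors with $\un\cdot\vv\leq 1/\theta$, together with $\sum\vv\vv^{\intercal}\succeq c\,\theta^{-d-2}I_d$. This trades the complex derivative bounds for a standard primitive lattice-point count, and uses $\beta<1+\frac d2$ in the same way.

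The genuine gap is the large-$\aalpha$ regime, which you do not complete. The claim that averaging over $k$ gives order $1$ is asserted, not proved, and you leave primitivity of the shifted vectors and uniformity near ``rational'' $\aalpha$ as an open ``delicate step''. Both obstacles vanish with the right family. Pick $j$ with $|\alpha_j|\geq c_0\theta/\sqrt d$, any $l\neq j$, and set $\vv_k=k\boldsymbol{e}_j+\boldsymbol{e}_l$ for $1\leq k\leq M=\lfloor 1/\theta\rfloor$. These vectors are primitive because their $l$-th coordinate equals $1$, so no Möbius inversion or coprime-pair density is needed; moreover $e^{-\theta\un\cdot\vv_k}\geq e^{-2}$. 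The half-angles $\varphi_k=\aalpha\cdot\vv_k/2$ form an arithmetic progression with step $h=|\alpha_j|/2\in[c_0\theta/(2\sqrt d),\pi/2]$. Call $k$ bad if $\varphi_k$ lies within $\varrho$ of $\pi\Z$. If $h\geq 2\varrho$, two consecutive indices cannot both be bad, since $2\varrho\leq h\leq\pi-2\varrho$. If $h<2\varrho$, the progression meets at most $Mh/\pi+2$ windows of width $2\varrho$, each holding at most $4\varrho/h$ terms. That gives at most $4\varrho M/\pi+8\varrho/h\leq M/2$ bad indices, once $\varrho$ is small compared with $c_0/\sqrt d$ and $\theta$ is small. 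Hence at least $M/3$ terms satisfy $\sin^2\varphi_k\geq\sin^2\varrho$, so the difference of logarithms is $\gtrsim 1/\theta$, and the resulting decay $\exp(-c/\theta)$ beats $\sqrt{\det\BB}$. Your worry about other rational points does not arise here. Since $|\alpha_j|\leq\pi$, the step $h$ can never be close to a nonzero multiple of $\pi$, which is the only way such a progression could stay near $\pi\Z$. Finally, a one-parameter family giving order $\theta^{-1}$ is already enough, so the $\theta^{-d}$ count you were aiming for is unnecessary.
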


\begin{proof}
    Let $\aalpha \notin \Delta(\ttheta)$. We start from
    $$\log \left(  \frac{|Zon_d(e^{ - \ttheta + i \aalpha} )|}{Zon_d(e^{ - \ttheta })} \right) = 
     - \sum_{ \vv\in \P_{d+}} 2^{d(\vv)-1} \log \left(  \frac{|1 - e^{-  \ttheta \cdot \vv + i \aalpha \cdot \vv}| }{1 - e^{-  \ttheta \cdot \vv } }\right). $$
    Using $ \frac{\left|1 - x e^{iy}\right|} {1 - x} = \sqrt{1+ \frac{4 x \sin^2\left(y/2\right) }{(1 - x)^2}}$ for $x \in (0,1)$, we have
    $$\log \left(  \frac{|Zon_d(e^{ - \ttheta + i \aalpha} )|}{Zon_d(e^{ - \ttheta })} \right) = 
     - \sum_{ \vv\in \P_{d+}} 2^{d(\vv)-2} \log \left(1 +  \frac{ 4 e^{-\ttheta\cdot \vv} \sin^2\left(\frac{\aalpha \cdot \vv}{2} \right) }{(1 - e^{-  \ttheta \cdot \vv })^2 }\right).$$
    Since the denominator is smaller than 1, and since $\frac{\log(5)}{4}x \leq \log(1 + x)$ for $0 \leq x \leq 4$, we get
    $$\log \left(  \frac{|Zon_d(e^{ - \ttheta + i \aalpha} )|}{Zon_d(e^{ - \ttheta })} \right) \leq
     -  \log(5) \sum_{ \vv\in \P_{d+}} 2^{d(\vv)-2}  e^{-\ttheta \cdot \vv} \sin^2\left(\frac{\aalpha \cdot \vv}{2}\right) = - \log(5)\, U_{\ttheta, \aalpha}.$$
   Set $\theta = \max_{1 \leq j \leq d}\theta_j$ and $r = \max_{1 \leq j \leq d}|\alpha_j|$. Since $\aalpha \notin \Delta(\ttheta)$ and the basis $(\boldsymbol{w}_j)$ is orthonormal, $\|\aalpha\| > \theta^\beta$, hence $r > \theta^\beta/\sqrt{d}$. Let $c_0>0$ be a small constant, fixed below. We distinguish two regimes.

   \emph{First regime: $\theta^\beta/\sqrt{d} < r \leq c_0 \theta$.} By Taylor's formula with integral remainder,
   $$\log Zon_d(e^{-\ttheta+i\aalpha}) - \log Zon_d(e^{-\ttheta}) = i\aalpha^{\intercal}\ba(e^{-\ttheta}) - \frac{1}{2}\aalpha^{\intercal}\BB(e^{-\ttheta})\aalpha + R,$$
   with $|R| \leq \frac{1}{6}\sup_{t \in [0,1]} \sum_{i,j,k} |C_{i,j,k}(e^{-\ttheta + it\aalpha})|\, r^3$.
   By (\ref{equivalent_BB}), and by Lemma~\ref{lemma_complex_points} for $\CC$ at the complex points $e^{-\ttheta + it\aalpha}$ (which applies as soon as $c_0 \leq \frac{1}{4}$, since $\ttheta = \theta\un$ here), there are constants $c_1, c_2 >0$ such that $\aalpha^{\intercal}\BB(e^{-\ttheta})\aalpha \geq c_1 \theta^{-(d+2)} r^2$ and $|R| \leq c_2\theta^{-(d+3)} r^3$. Choosing $c_0 \leq \min\left(\frac{1}{4}, \frac{c_1}{4c_2}\right)$, we get
   $$\log \left(  \frac{|Zon_d(e^{ - \ttheta + i \aalpha} )|}{Zon_d(e^{ - \ttheta })} \right) \leq -\frac{c_1}{4}\,\theta^{-(d+2)} r^2 \leq -\frac{c_1}{4d}\,\theta^{-(d+2-2\beta)},$$
   which tends to $-\infty$ as a negative power of $\theta$, since $\beta < 1 + \frac{d}{2}$.

   \emph{Second regime: $r \geq c_0 \theta$.} Without loss of generality, $|\alpha_1| = r$. Let $M = \lfloor 1/\theta \rfloor$ and consider the primitive vectors $\vv_i = (i, 1, 0, \dots, 0)$, for $1 \leq i \leq M$, for which $d(\vv_i) = 2$, $e^{-\ttheta\cdot\vv_i} \geq e^{-(M+1)\theta} \geq e^{-2}$, and
   $$ \sin^2\left(\frac{\aalpha \cdot \vv_i}{2}\right) = \sin^2(\varphi_i), \qquad \varphi_i = \frac{i \alpha_1 +\alpha_2 }{2}. $$
   The points $\varphi_i$ form an arithmetic progression with step $h = |\alpha_1|/2 \in [c_0\theta/2, \pi/2]$. Let $\varrho = \min(c_0/40, \pi/8)$ and call $i$ \emph{bad} if $\varphi_i$ lies at distance less than $\varrho$ from $\pi\Z$. If $h \geq 2\varrho$, two consecutive indices cannot both be bad, since $2\varrho \leq h \leq \pi - 2\varrho$; hence at least $\lfloor M/2 \rfloor$ indices are good. If $h < 2\varrho$, each interval of length $2\varrho$ centred at a point of $\pi\Z$ contains at most $2\varrho/h + 1 \leq 4\varrho/h$ points of the progression, and the progression meets at most $Mh/\pi + 2$ such intervals, so that the number of bad indices is at most
   $$\left(\frac{Mh}{\pi} + 2\right)\frac{4\varrho}{h} = \frac{4\varrho}{\pi}M + \frac{8\varrho}{h} \leq \left(\frac{4\varrho}{\pi} + \frac{16\varrho}{c_0}\right)\frac{1}{\theta} \leq \frac{1}{2\theta}.$$
   In both cases, at least $M/3$ indices are good for $\theta$ small enough, and each of them satisfies $\sin^2(\varphi_i) \geq \sin^2\varrho$. Therefore
   $$ U_{\ttheta, \aalpha} \geq  \sum_{i = 1}^{M}  e^{-\ttheta \cdot \vv_i}  \sin^2\left(\frac{\aalpha \cdot \vv_i}{2}\right) \geq \frac{M}{3}\, e^{-2} \sin^2\varrho \geq \frac{c}{\theta}$$
   for some constant $c>0$, which gives
   $$|Zon_d(e^{ - \ttheta + i \aalpha} )| \leq Zon_d(e^{ - \ttheta }) \exp\left(- c \log(5)\, \theta^{-1}\right).$$

   In both regimes, $|Zon_d(e^{ - \ttheta + i \aalpha})|/Zon_d(e^{-\ttheta})$ decays faster than any power of $\theta$, uniformly for $\aalpha \notin \Delta(\ttheta)$.
   By (\ref{equivalent_BB}), the determinant of $\BB(e^{-\ttheta})$ satisfies
   $$ \det\BB(e^{-\ttheta}) \sim (d+1)\, \kappa_d^d \: \frac{1}{ \prod_{k = 1}^d \theta_k^{d+2}}.$$
    It follows that 
    $$\frac{1}{\sqrt{\det \BB(e^{-\ttheta})}} = O \left(\left(\max_{1 \leq j \leq d }\theta_j\right)^{ \frac{d(d+2)}{2}} \right),$$
    and we get the uniform estimate of the lemma. 
\end{proof}

\begin{lem}\label{condtion345}
    The following properties hold:
    \begin{itemize}
        \item(\textbf{III}) The eigenvalues $ \lambda_1(\ttheta),\dots, \lambda_d(\ttheta)$ of $\BB(e^{-\ttheta})$ all tend to $+\infty$ as $\ttheta \rightarrow \boldsymbol{0}$.
        \item(\textbf{IV}) $B_{i,i}(e^{-\ttheta}) = o (a_i(e^{-\ttheta})^2)$ as $\ttheta \rightarrow \boldsymbol{0}$.
        \item (\textbf{V}) For $\ttheta$ small enough and $\aalpha \in [- \pi, \pi]^d \setminus \{\boldsymbol{0}\} $, we have  $|Zon_d(e^{-\ttheta + i \aalpha})| < Zon_d(e^{-\ttheta})$.
    \end{itemize}
\end{lem}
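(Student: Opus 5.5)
The three conditions are largely read off from the estimates already established, so the plan is to treat them one at a time, in order of difficulty.

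For (\textbf{III}), I would use the equivalent (\ref{equivalent_BB}). On the diagonal $\ttheta=\theta\un$ it gives
$$\BB(e^{-\ttheta}) = \frac{\kappa_d}{\theta^{d+2}}\bigl(I_d+J_d+o(1)\bigr).$$
The eigenvalues of $I_d+J_d$ are $1$, with multiplicity $d-1$, and $d+1$. By continuity of eigenvalues, or more simply by Weyl's inequality applied to the symmetric perturbation, every eigenvalue of $\BB(e^{-\ttheta})$ is therefore at least $\frac{\kappa_d}{2}\theta^{-(d+2)}$ for $\theta$ small, and so tends to $+\infty$. If the ratios $\theta_i/\theta_j$ are only bounded, the same argument works after conjugating by $D$: the quadratic form satisfies $\aalpha^{\intercal}\BB\aalpha\geq c\,|\ttheta|^{-(d+2)}\|\aalpha\|^2$.

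For (\textbf{IV}), I would compare (\ref{equivalent_aa}) with (\ref{equivalent_BB}). They give $a_i^2\asymp\theta^{-2(d+1)}$ and $B_{i,i}\asymp\theta^{-(d+2)}$. The ratio $B_{i,i}/a_i^2$ is then of order $\theta^{d}$, which tends to $0$ since $d\geq 2$.

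For (\textbf{V}), I would reuse the identity from the proof of Lemma~\ref{condition2}:
$$\log\frac{|Zon_d(e^{-\ttheta+i\aalpha})|}{Zon_d(e^{-\ttheta})}\leq -\log(5)\,U_{\ttheta,\aalpha},$$
with
$$U_{\ttheta,\aalpha}=\sum_{\vv\in\P_{d+}}2^{d(\vv)-2}e^{-\ttheta\cdot\vv}\sin^2\!\left(\frac{\aalpha\cdot\vv}{2}\right).$$
Strict inequality holds as soon as a single term of $U_{\ttheta,\aalpha}$ is positive, that is, as soon as some $\vv\in\P_{d+}$ has $\aalpha\cdot\vv\notin 2\pi\Z$. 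Suppose instead that $\aalpha\cdot\vv\in2\pi\Z$ for every primitive $\vv\in\P_{d+}$. Taking $\vv=\mathbf{e}_j$ gives $\alpha_j\in 2\pi\Z$ for each $j$. Since $\alpha_j\in[-\pi,\pi]$, this forces $\alpha_j=0$, except for the boundary values $\alpha_j=\pm\pi$, which must be ruled out separately.

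I expect $\alpha_j=\pm\pi$ to be the only delicate point, and it is excluded by the same computation. For $\alpha_j=\pm\pi$, take $\vv=\mathbf{e}_j$: then $\sin^2(\alpha_j/2)=1\neq 0$, so that term of $U_{\ttheta,\aalpha}$ is positive. More generally, the single generator $\mathbf{e}_j$ always contributes $\frac12 e^{-\theta_j}\sin^2(\alpha_j/2)$. This is strictly positive whenever $\alpha_j\in[-\pi,\pi]\setminus\{0\}$, so it settles every $\aalpha\neq\boldsymbol{0}$ at once.

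This argument gives strict inequality for every $\ttheta$, not just small ones, so no case analysis on the size of $\aalpha$ is needed. The two regimes of Lemma~\ref{condition2} can be cited if a quantitative form is desired, but they are not required for (\textbf{V}).
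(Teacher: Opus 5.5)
Your proposal is correct and follows the paper's proof essentially step for step. You get (III) from $\BB(e^{-\ttheta})\sim\frac{\kappa_d}{\prod_k\theta_k}D^{-1}(I_d+J_d)D^{-1}$, with your Weyl-inequality remark making the perturbation step explicit; (IV) from $B_{i,i}/a_i^2\asymp\prod_k\theta_k\to0$; and (V) from the non-negativity of the terms in the formula of Lemma~\ref{condition2} together with the positive term $\frac12 e^{-\theta_j}\sin^2(\alpha_j/2)$ of $\mathbf{e}_j$. The detour about $\alpha_j=\pm\pi$ is unnecessary, since $\pm\pi\notin2\pi\Z$ and so that case never arises.
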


\begin{proof}
Property (\textbf{III}) follows from (\ref{equivalent_BB}), since the smallest eigenvalue of $\frac{\kappa_d}{\prod_k \theta_k} D^{-1}(I_d + J_d)D^{-1}$ is at least $\frac{\kappa_d}{\prod_k \theta_k \max_j \theta_j^2}$. Property (\textbf{IV}) directly follows from (\ref{equivalent_aa}) and (\ref{equivalent_BB}), since $B_{i,i}/a_i^2 \sim \frac{2}{\kappa_d}\prod_k \theta_k$. Property (\textbf{V}) follows from the first formula in the proof of Lemma~\ref{condition2}: all the terms of the sum are non-negative, and the term corresponding to the $j$-th vector of the canonical basis is positive as soon as $\alpha_j \neq 0$.
\end{proof}

\begin{proof}[Proof of Proposition~\ref{definition_gittenberger}]
By Lemmas~\ref{condition1}, \ref{condition2}, and \ref{condtion345}, the function $Zon_d$ satisfies the five conditions of \cite[Definition 2]{Gittenberger:hadmi}, hence \cite[Theorem 4]{Gittenberger:hadmi} applies.
\end{proof}

\subsection{The saddle-point equation}
The last step towards the main theorem is the choice of $\ttheta$. Let $\nn \in (\N^*)^d$ be the dimensions of the box. In order to compute the number $p_{\nn}$ of zonotopes inscribed in this box, we choose $\ttheta$ as an approximate solution of the so-called saddle-point equation $\ba(e^{-\ttheta}) = \nn$.

\begin{lem}\label{saddle_equation_solution}
The vector $\widetilde{\ttheta}_{\nn}$ defined by 
$$ \widetilde{\theta}_{\nn, i} = \kappa_d^{\frac{1}{d+1}}\, \frac{\left(\prod_{j=1}^d n_j \right)^{\frac{1}{d+1}}}{n_i}, \qquad 1 \leq i \leq d, $$
satisfies $\ba(e^{- \widetilde{\ttheta}_{\nn}}) =  \nn\, (1 + o(1) )$.
\end{lem}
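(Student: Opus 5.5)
The plan is to plug $\widetilde{\ttheta}_{\nn}$ into the first-order equivalent (\ref{equivalent_aa}) of Lemma~\ref{lemma_derivativ} and check algebraically that it returns $\nn$. Equivalent (\ref{equivalent_aa}) reads $a_i(e^{-\ttheta}) \sim \kappa_d/(\theta_i \prod_j \theta_j)$ as $\ttheta \to \boldsymbol{0}$, uniformly in any cone $\varepsilon \leq \theta_i/\theta_j \leq 1/\varepsilon$. So I first have to confirm that $\widetilde{\ttheta}_{\nn}$ tends to $\boldsymbol{0}$ and stays in such a cone.

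For the cone condition, note that $\widetilde{\theta}_{\nn,i}/\widetilde{\theta}_{\nn,j} = n_j/n_i$. Hence I will state the lemma for $\nn \to \infty$ with bounded ratios $n_i/n_j$; the case actually used later is $\nn = n\un$, where all ratios equal $1$. In that regime $\prod_j n_j \asymp n_i^d$, so
$$\widetilde{\theta}_{\nn,i} \asymp n_i^{\frac{d}{d+1}-1} = n_i^{-\frac{1}{d+1}} \longrightarrow 0,$$
and Lemma~\ref{lemma_derivativ} (with $\kk$ the $i$-th basis vector, through $a_i = -\partial_{\theta_i}\log Zon_d$) applies along this family.

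Next comes the computation. Write $P = \prod_j n_j$. Then
$$\prod_j \widetilde{\theta}_{\nn,j} = \kappa_d^{\frac{d}{d+1}}\, P^{\frac{d}{d+1}}\, P^{-1} = \kappa_d^{\frac{d}{d+1}} P^{-\frac{1}{d+1}}.$$
Multiplying by $\widetilde{\theta}_{\nn,i} = \kappa_d^{\frac{1}{d+1}} P^{\frac{1}{d+1}}/n_i$ gives
$$\widetilde{\theta}_{\nn,i}\prod_j \widetilde{\theta}_{\nn,j} = \kappa_d/n_i.$$
Therefore $\kappa_d/(\widetilde{\theta}_{\nn,i}\prod_j\widetilde{\theta}_{\nn,j}) = n_i$ exactly. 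The equivalence $a_i(e^{-\widetilde{\ttheta}_{\nn}}) \sim n_i$ then yields $\ba(e^{-\widetilde{\ttheta}_{\nn}}) = \nn(1+o(1))$ componentwise.

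The only real point is uniformity: the $o(1)$ in Lemma~\ref{lemma_derivativ} must be uniform over the cone, not just along fixed rays. The proof of that lemma provides this, since its error term is $O(|\ttheta|^{-1-c'})$ with constants depending only on $\varepsilon$, while the main term is of order $|\ttheta|^{-d-1}$. Beyond that the argument is direct.
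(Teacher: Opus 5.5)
Your proposal is correct and follows the paper's argument: both rely only on the leading-order equivalent (\ref{equivalent_aa}). The paper derives $\widetilde{\ttheta}_{\nn}$ by solving $\kappa_d/(\theta_i\prod_j\theta_j)=n_i$ (taking the product over $i$), while you check the identity $\widetilde{\theta}_{\nn,i}\prod_j\widetilde{\theta}_{\nn,j}=\kappa_d/n_i$ directly. Your explicit restriction to $\nn\to\infty$ with bounded ratios $n_i/n_j$, which the paper leaves implicit but which is needed for the cone condition of Lemma~\ref{lemma_derivativ}, is a useful clarification, as is your remark on the uniformity of the error term over that cone.
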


\begin{proof}
By (\ref{equivalent_aa}), for $1 \leq i \leq d$,
$$  a_i(e^{- \ttheta}) - n_i \underset{\ttheta \rightarrow \boldsymbol{0}}{=} \frac{\kappa_d}{\theta_i  \prod_{j = 1}^d \theta_j} -n_i + o\left( \frac{1}{\theta_i  \prod_{j = 1}^d \theta_j} \right). $$
Setting $\frac{\kappa_d}{\theta_i  \prod_{j} \theta_j} = n_i$ for all $i$ and taking the product over $i$, we obtain
$$  \prod_{j = 1}^d \theta_j = \left(\frac{\kappa_d^d}{\prod_{j =1}^{d} n_j}\right)^{\frac{1}{d+1}},$$
and the result follows.
\end{proof}

Lemma~\ref{saddle_equation_solution} is not precise enough to get rid of the Gaussian factor in Proposition~\ref{definition_gittenberger}: this requires $(\ba - \nn)^{\intercal}\BB^{-1}(\ba - \nn) \rightarrow 0$, that is $|\ba(e^{-\ttheta}) - \nn| = o(|\ttheta|^{-(d+2)/2})$, whereas the error in Lemma~\ref{saddle_equation_solution} is only $o(|\ttheta|^{-(d+1)})$. We therefore restrict ourselves to the diagonal $\nn = n\un$, and use the exact solution of the saddle-point equation. Let $\ell_d(\theta) = \log Zon_d(e^{-\theta}\un)$. By symmetry, $a_i(e^{-\theta\un}) = -\frac{1}{d}\,\ell_d'(\theta)$ for $1 \leq i \leq d$, and this function decreases from $+\infty$ to $0$ on $(0,+\infty)$. Hence there is a unique $\theta^*_n > 0$ such that $\ba(e^{-\theta^*_n\un}) = n\un$, and $\theta^*_n \sim \tilde{\theta}_n = (\kappa_d/n)^{1/(d+1)}$ by Lemma~\ref{saddle_equation_solution}. Proposition~\ref{definition_gittenberger}, with $\ttheta = \theta^*_n\un$ (see also \cite[Corollary 3]{Gittenberger:hadmi}), gives
\begin{align}\label{p_n_equivalent}
    z_d(n) = p_{n\un} \underset{ n \rightarrow + \infty}{\sim} \frac{\exp\left(\ell_d(\theta^*_n) + d n \theta^*_n\right)}{\sqrt{(2 \pi)^d \det \BB \left( e^{- \theta^*_n\un} \right)}} .
\end{align}
As Sinai did in dimension 2 \cite{Sinai:polygonallines}, the same approach could lead to an estimate of the number of zonotopes inscribed in any box $[0,c_1 n] \times \dots \times [0, c_d n]$ (with positive constants $c_i$). However, a more thorough analysis (analogous to Section~\ref{subsection_monovariable_equival}) would be needed to obtain an asymptotic expansion of $Zon_d$ off the diagonal, so we restrict ourselves to $z_d(n) = p_{(n, \dots,n)}$.

It remains to expand the exponent in (\ref{p_n_equivalent}). Following Proposition~\ref{prop_equivalent_monovariate}, we write $\ell_d = L_d + R_d$, with
$$ L_d(\theta) = \sum_{\delta=1}^{d-1} p_{d,\delta}\,\frac{\zeta(\delta+2)\, \delta!}{\zeta(\delta+1)}\, \theta^{-(\delta+1)}, \qquad R_d(\theta) = I_{\text{crit},d}(\theta) + C_d(\theta) + O(\theta^{1/2}),$$
and we define
\begin{align}\label{def_F_d}
F_d(n) = \min_{\theta > 0}\left(L_d(\theta) + dn\theta\right).
\end{align}
The minimum is attained at a unique point $\hat{\theta}_n \sim \tilde{\theta}_n$, which can be expanded as a series in powers of $\tilde{\theta}_n^{2}$. Plugging this series into $L_d(\theta) + dn\theta$ shows that $F_d(n)$ is the sum of a polynomial $\widehat{Q}_d(n^{1/(d+1)})$ without constant term, of a constant $\gamma_d$, and of a term $o(1)$.

\begin{lem}\label{lemma_exponent}
As $n \rightarrow \infty$, $\ell_d(\theta^*_n) + dn\theta^*_n = F_d(n) + R_d(\tilde{\theta}_n) + o(1)$.
\end{lem}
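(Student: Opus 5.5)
The plan is to compare $G(\theta) = \ell_d(\theta) + dn\theta$ with its smooth part $G_0(\theta) = L_d(\theta) + dn\theta$, where $G = G_0 + R_d$. Write $\theta = \tilde{\theta}_n$ throughout for the common scale. The function $\ell_d$ is strictly convex: it is a positive combination of the functions $-\log(1-e^{-c\theta})$, whose second derivatives are positive. Moreover $G'(\theta^*_n) = \ell_d'(\theta^*_n) + dn = 0$. Hence $\theta^*_n$ is the unique minimiser of $G$, just as $\hat\theta_n$ is the minimiser of $G_0$, with $F_d(n) = G_0(\hat\theta_n)$. The claim then splits into two estimates:
$$G(\theta^*_n) - G(\hat\theta_n) = o(1) \qquad\text{and}\qquad R_d(\hat\theta_n) - R_d(\tilde\theta_n) = o(1).$$
The first gives $G(\theta^*_n) = F_d(n) + R_d(\hat\theta_n) + o(1)$, and the second replaces $\hat\theta_n$ by $\tilde\theta_n$.

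\textbf{Step 1: a derivative bound on $R_d$.} I need $R_d'(\theta) = O(\theta^{-2+a})$ for some $a>0$, and in fact only $o(\theta^{-2})$ suffices below. I would get this from the representation in Proposition~\ref{prop_equivalent_monovariate} rather than from the $O(\theta^{1/2})$ remainder, which cannot be differentiated termwise. The cleanest way is to write
$$R_d(\theta) = \ell_d(\theta) - L_d(\theta) = \frac{1}{2i\pi}\int_{c_2 - i\infty}^{c_2 + i\infty} \frac{\mathbf{\Pi}_d(\zeta)(s)}{\zeta(s)}\,\zeta(s+1)\Gamma(s)\,\theta^{-s}\,ds,$$
which is exactly the integral left after collecting the residues at $d,\dots,2$ in that proof. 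Differentiating in $\theta$ multiplies the integrand by $-s/\theta$. I then redo the contour shift with this extra factor: the residue at $0$ gives $O(\theta^{-1})$, the line $\Re(s) = -\frac12$ gives $O(\theta^{-1/2})$, and the integral over $\gamma$ is bounded by
$$\int |s|\,\bigl|\Gamma(s)\zeta(s+1)\mathbf{\Pi}_d(\zeta)(s)/\zeta(s)\bigr|\,\theta^{-1-\Re(s)}\,|ds|.$$
On $\gamma$ we have $\Re(s) \le 1 - A/\log(2+|t|)$ and $\Gamma$ decays exponentially, so this is $O(\theta^{-2+a})$ with $a = A/\log(2+T)$ for a fixed cutoff $T$. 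The tail $|t|>T$ is absorbed by the exponential decay of $\Gamma$ after choosing $T$ depending on $\theta$, or more simply one checks it is $o(\theta^{-2})$. The same estimate gives $R_d(\theta) = O(\theta^{-1+a})$.

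\textbf{Step 2: comparing $\theta^*_n$ and $\hat\theta_n$.} From $\ell_d'(\theta^*_n) = -dn = L_d'(\hat\theta_n)$ we get $L_d'(\theta^*_n) - L_d'(\hat\theta_n) = -R_d'(\theta^*_n)$. Since $L_d'' \asymp \theta^{-d-2}$ on the relevant range, and both points are $\sim\tilde\theta_n$, this yields
$$|\theta^*_n - \hat\theta_n| = O(\theta^{d+2}\theta^{-2+a}) = O(\theta^{d+a}).$$
Now use that $\hat\theta_n$ minimises $G_0$ and $\theta^*_n$ minimises $G$. On one side,
$$G(\theta^*_n) \le G(\hat\theta_n) = F_d(n) + R_d(\hat\theta_n).$$
On the other side,
$$G(\theta^*_n) \ge F_d(n) + R_d(\theta^*_n) \ge F_d(n) + R_d(\hat\theta_n) - |R_d'|\,|\theta^*_n - \hat\theta_n|.$$
The error term is $O(\theta^{-2+a}\theta^{d+a}) = O(\theta^{d-2+2a}) = o(1)$, since $d \ge 2$.

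\textbf{Step 3: replacing $\hat\theta_n$ by $\tilde\theta_n$.} The expansion of $\hat\theta_n$ in powers of $\tilde\theta_n^2$ gives $\hat\theta_n - \tilde\theta_n = O(\theta^3)$. Hence
$$|R_d(\hat\theta_n) - R_d(\tilde\theta_n)| = O(\theta^3\cdot\theta^{-2+a}) = o(1).$$
The logarithmic term $2\mathbf{\Pi}_d(\zeta)(0)\log\theta$ inside $C_d$ is already covered by the derivative bound, and anyway changes by $O(\theta^2)$.

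\textbf{Main obstacle.} The delicate point is Step 1. It needs a differentiated version of Proposition~\ref{prop_equivalent_monovariate}, with an explicit saving $\theta^{a}$ over the trivial bound $\theta^{-2}$ on the critical contour. For $d=2$ the margin in Step 2 is exactly this saving, so it cannot be skipped. If the uniformity in $|t|$ is troublesome, I would instead bound $R_d'$ on a fixed neighbourhood $[\frac12\theta,2\theta]$ via Cauchy estimates applied to a holomorphic extension of $R_d$ in a sector around the positive axis, using the same contour integral.
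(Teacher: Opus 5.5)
Your argument is correct and is essentially the paper's. You compare the minimisers $\theta^*_n$ of $G_0+R_d$ and $\hat\theta_n$ of $G_0$, bound the discrepancy by $\sup|R_d'|\cdot|\theta^*_n-\hat\theta_n| = O(\sup|R_d'|^2\,\tilde\theta_n^{d+2})$, reduce everything to $R_d'(\theta)=o(\theta^{-2})$ via the contour representation, and finish with $\hat\theta_n-\tilde\theta_n=O(\tilde\theta_n^3)$.

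One correction: a fixed power saving $R_d'(\theta)=O(\theta^{-2+a})$ with $a>0$ is not available unconditionally. On the tail of $\gamma$ one has $\Re(s)\to 1$, so such a bound would amount to a zero-free strip. As you note yourself, only $o(\theta^{-2})$ is needed, and it already gives $o(\tilde\theta_n^{d-2})=o(1)$ even for $d=2$. That weaker bound is exactly what the paper obtains by dominated convergence on $\gamma$, equivalently by your truncation at height $T$ with $T\to\infty$.
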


\begin{proof}
The function $f(\theta) = L_d(\theta) + dn\theta$ satisfies $f''(\theta) \asymp \theta^{-(d+2)}$ near $\hat{\theta}_n$. Since $\theta^*_n$ minimizes $f + R_d$, we have $f(\theta^*_n) + R_d(\theta^*_n) \leq F_d(n) + R_d(\hat{\theta}_n)$ and $|\theta^*_n - \hat{\theta}_n| = O(|R_d'| / f'') $, so that
$$\left|\ell_d(\theta^*_n) + dn\theta^*_n - F_d(n) - R_d(\hat{\theta}_n)\right| = O\left(\frac{\sup |R_d'|^2}{f''(\hat{\theta}_n)}\right) = O\left(\sup|R_d'|^2\, \tilde{\theta}_n^{\,d+2}\right),$$
where the suprema are taken on a neighbourhood of $\tilde\theta_n$ of relative size $o(1)$. It remains to check that $R_d'(\theta) = o(\theta^{-2})$, which makes this $o(\tilde{\theta}_n^{\,d-2}) = o(1)$. The derivative of $C_d(\theta)$ is $O(\theta^{-1})$, and differentiating under the integral sign shows that the derivative of the integral over $\Re(s) = -\frac{1}{2}$ in the proof of Proposition~\ref{prop_equivalent_monovariate} is $O(\theta^{-1/2})$. Finally,
$$I_{\text{crit},d}'(\theta) = -\frac{1}{2i\pi}\int_\gamma \frac{\mathbf{\Pi}_d (\zeta)(s) }{ \zeta(s)}\, \zeta(s+1)\,\Gamma(s)\, s\, \theta^{-s-1}\, ds,$$
and, for $s = \gamma_r(t)$ or $s = \gamma_l(t)$ and $\theta<1$, $|\theta^{-s-1}| \leq \theta^{-2}\, \theta^{A/\log(2+|t|)}$. For each $t$, $\theta^{A/\log(2+|t|)} \rightarrow 0$ as $\theta \rightarrow 0$, and this factor is bounded by 1; since the rest of the integrand is integrable on $\gamma$ (see the proof of Proposition~\ref{prop_equivalent_monovariate}), the dominated convergence theorem gives $I_{\text{crit},d}'(\theta) = o(\theta^{-2})$. Finally, $\hat{\theta}_n - \tilde{\theta}_n = O(\tilde{\theta}_n^3)$, so that $R_d(\hat{\theta}_n) - R_d(\tilde{\theta}_n) = o(\tilde{\theta}_n) = o(1)$.
\end{proof}

\subsection{Main theorem}\label{section_resultat_final}

\begin{thm}\label{theorem_1_complet}
Let $z_d(n)$ be the number of lattice zonotopes inscribed in $[0,n]^d$. Recall that $\kappa_d = \frac{2^{d-1} \zeta(d+1)}{\zeta(d)}$, and let $\mathbf{\Pi}_d $ and $(p_{d,\delta})_{0 \leq \delta < d}$ respectively be the operator and the coefficients introduced in Definition~\ref{definition_operator}. With $\gamma$ the contour defined in Proposition~\ref{prop_equivalent_monovariate}, let
\begin{align}
    I_{\text{crit}, d}(\theta) = \frac{1}{2 i \pi } \int_{\gamma}  \frac{\mathbf{\Pi}_d (\zeta)(s) }{ \zeta(s)}\, \zeta(s+1)\, \Gamma(s)\, \theta^{-s} ds, 
\end{align}
and $I_{\text{crit}}(d,n) = I_{\text{crit}, d}\left( \left(\kappa_d/n\right)^{\frac{1}{d+1}} \right)$. Let $\widehat{Q}_d$ and $\gamma_d$ be the polynomial and the constant defined after (\ref{def_F_d}). As $n $ tends to $\infty$, we have 
$$z_d(n) \sim  \alpha_d\, e^{\gamma_d}\, n ^{\beta_d} \exp \left( \widehat{Q}_d(n^\frac{1}{d+1}) + I_{\text{crit}}(d,n) \right), $$
with $\alpha_d = \frac{\kappa_d^{\frac{d}{2(d+1)} +  \frac{2}{d+1} \mathbf{\Pi}_d [\zeta](0) } \exp \left( 2\mathbf{\Pi}_d \left[\log(2\pi)\zeta -  \zeta' \right](0) \right) }{ (2\pi)^{d/2 } \sqrt{d+1}}$ and 
$\beta_d = -\frac{1}{2(d+1)}\left(d(d+2) + 4 \mathbf{\Pi}_d \left[\zeta\right] (0) \right)$.

Moreover, $\widehat{Q}_d$ has degree $d$, and $\widehat{Q}_d(X) = Q_d(X) - E_d(X)$, where
\begin{align}
    Q_d(X) = (d+1) \kappa_d^{\frac{1}{d+1}}  X^d +  \sum_{\delta=2}^{d-1} p_{d,\delta-1}\frac{\zeta(\delta+1) (\delta-1)!}{\zeta(\delta) }\kappa_d^{-\frac{\delta}{d+1}} X^{\delta}
\end{align}
is the value of $L_d(\theta) + dn\theta$ at $\theta = \tilde{\theta}_n$, and $E_d$ is a polynomial without constant term, of degree at most $d-4$. In particular $E_d = 0$ and $\gamma_d=0$ for $d \in \{2,3\}$, $E_4 = 0$ and $\gamma_4 = -\frac{8\zeta(3)^2}{225\,\zeta(5)}$, and for $d \geq 5$ the leading term of $E_d$ is
$$\frac{(d-2)^2\, \lambda_d^2}{2d(d+1)}\, \kappa_d^{-\frac{2d-3}{d+1}}\, X^{d-4}, \qquad \text{where } \lambda_d = p_{d,d-3}\,\frac{\zeta(d-1)\,(d-3)!}{\zeta(d-2)}.$$
For instance, $E_5(X) = \frac{7\cdot 2^{1/3}\sqrt{3}\; 35^{1/6}\,\pi\,\zeta(5)^{7/6}}{288\,\zeta(3)^2}\, X \approx 0.21758\, X$. For $d \geq 6$, $E_d$ and $\gamma_d$ are explicitly computable from the expansion of $\hat\theta_n$ described in the proof below.

Finally, under the hypothesis that all the zeros of the Riemann $\zeta$ function in the critical strip are simple, $I_{\text{crit}}(d,n)$ can be written as a sum over the set $Z$ of non-trivial zeros:
\begin{align}\label{I_crit_somme}
I_{\text{crit}}(d,n)  = \sum_{r\in Z} \frac{\mathbf{\Pi}_d [\zeta](r)\, \zeta(r+1)\,\Gamma(r)}{\zeta'(r)}  \left( \frac{n}{\kappa_d} \right)^{ \frac{r}{d+1}},  
\end{align}
where the sum is understood as the limit of the partial sums over the zeros $r$ with $|\Im(r)| < T_k$, as $k \rightarrow \infty$ (with $(T_k)$ as in the proof of Proposition~\ref{prop_equivalent_monovariate}).
\end{thm}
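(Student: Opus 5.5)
The proof assembles (\ref{p_n_equivalent}), Lemma~\ref{lemma_exponent} and Proposition~\ref{prop_equivalent_monovariate}. By (\ref{p_n_equivalent}), $z_d(n) \sim \exp(\ell_d(\theta^*_n) + dn\theta^*_n)/\sqrt{(2\pi)^d\det\BB(e^{-\theta^*_n\un})}$. Three pieces have to be evaluated.

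\emph{Determinant.} From the determinant equivalent in the proof of Lemma~\ref{condition2}, $\det\BB \sim (d+1)\kappa_d^d (\theta^*_n)^{-d(d+2)}$. Since $\theta^*_n \sim \tilde\theta_n = (\kappa_d/n)^{1/(d+1)}$, the denominator is asymptotic to
$$(2\pi)^{d/2}\sqrt{d+1}\;\kappa_d^{d/2}\,\kappa_d^{-\frac{d(d+2)}{2(d+1)}}\, n^{\frac{d(d+2)}{2(d+1)}}.$$
The $\kappa_d$ powers combine to $\kappa_d^{-\frac{d}{2(d+1)}}$ in the denominator. This produces the factor $\kappa_d^{\frac{d}{2(d+1)}}/((2\pi)^{d/2}\sqrt{d+1})$ in $\alpha_d$ and the part $-\frac{d(d+2)}{2(d+1)}$ of $\beta_d$.

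\emph{Exponent.} By Lemma~\ref{lemma_exponent}, the exponent equals $F_d(n) + R_d(\tilde\theta_n) + o(1)$, with $R_d = I_{\text{crit},d} + C_d + O(\theta^{1/2})$. At $\theta = \tilde\theta_n$, the term $2\mathbf{\Pi}_d[\zeta](0)\log\tilde\theta_n$ gives $\kappa_d^{\frac{2}{d+1}\mathbf{\Pi}_d[\zeta](0)}\, n^{-\frac{2}{d+1}\mathbf{\Pi}_d[\zeta](0)}$. This completes $\alpha_d$ together with the constant $\exp(2\mathbf{\Pi}_d[\log(2\pi)\zeta-\zeta'](0))$, and completes $\beta_d$. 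The term $I_{\text{crit},d}(\tilde\theta_n)$ is by definition $I_{\text{crit}}(d,n)$.

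\emph{Expanding $F_d(n)$.} This is the main step. Write $\theta = \tilde\theta_n(1+u)$. The leading part $p_{d,d-1}(d-1)!\,\zeta(d+1)\zeta(d)^{-1}\theta^{-d} + dn\theta$ equals $\kappa_d\theta^{-d}/d + dn\theta$. In the variable $u$ it is $n\tilde\theta_n\bigl((1+u)^{-d}+d(1+u)\bigr)$, which is stationary at $u=0$ with curvature $d(d+1)n\tilde\theta_n$. The lower-order terms $\lambda\theta^{-(\delta+1)}$ perturb this by a relative amount $\tilde\theta_n^{\,d-\delta-1}$. Because $P_d$ has parity opposite to $d-1$, only $\delta \equiv d-1 \pmod 2$ occurs, so the perturbations are in powers of $\tilde\theta_n^2$. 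Solving the stationarity equation by implicit functions gives $u$ as a power series in $\tilde\theta_n^2$, with first term $u_1 \propto \lambda_d\tilde\theta_n^2$ coming from $\delta = d-3$. Substituting back, $F_d(n) = f(\tilde\theta_n) - \tfrac12 f''(\tilde\theta_n)\tilde\theta_n^2u_1^2 + \dots$, where $f(\tilde\theta_n) = Q_d(n^{1/(d+1)})$ after expressing $\tilde\theta_n^{-\delta} = \kappa_d^{-\delta/(d+1)}n^{\delta/(d+1)}$. The leading correction is then of order $n\tilde\theta_n\cdot\tilde\theta_n^4 \asymp n^{(d-4)/(d+1)}$. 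Computing its coefficient gives $\frac{(d-2)^2\lambda_d^2}{2d(d+1)}\kappa_d^{-\frac{2d-3}{d+1}}$, where the factor $(d-2)$ is the derivative of $\theta^{-(d-2)}$ and the factor $d(d+1)$ is the curvature. All remaining terms are monomials $n^{j/(d+1)}$ with $j \le d-4$; the negative powers are $o(1)$, and the $j=0$ term is $\gamma_d$. For $d\in\{2,3\}$ there is no lower term, so $E_d = 0$ and $\gamma_d = 0$. For $d=4$, the single correction comes from $\delta=1$ and is the constant $-\frac{8\zeta(3)^2}{225\zeta(5)}$, which one checks directly. The value of $E_5$ follows by plugging in $p_{5,2}$. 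I expect the bookkeeping of the leading coefficient to be the fiddliest part.

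\emph{Series over zeros.} Assuming the zeros are simple, close the contour $\gamma$ using the rectangles of height $T_k$ (Valiron) from the proof of Proposition~\ref{prop_equivalent_monovariate}. The horizontal pieces vanish as before, and the residue theorem gives the residues $\mathbf{\Pi}_d[\zeta](r)\zeta(r+1)\Gamma(r)\theta^{-r}/\zeta'(r)$. Evaluating at $\theta = \tilde\theta_n$ yields (\ref{I_crit_somme}).
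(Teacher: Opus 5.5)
Your proposal is correct and follows the same route as the paper: the determinant equivalent with Lemma~\ref{lemma_exponent} and $C_d(\tilde\theta_n)$ gives $\alpha_d$ and $\beta_d$, and $\hat\theta_n$ is expanded in powers of $\tilde\theta_n^2$, with the quadratic correction $-\frac12 f''(\tilde\theta_n)(\hat\theta_n-\tilde\theta_n)^2$ giving the leading term of $E_d$ and $\gamma_4$; the zero series, which the paper leaves implicit, follows from the residues inside $\gamma$ truncated at heights $T_k$, as you say. There are two harmless slips: the leading part of $f$ is $\kappa_d\theta^{-d}+dn\theta$ with no factor $1/d$ (your own form $n\tilde\theta_n\bigl((1+u)^{-d}+d(1+u)\bigr)$ already uses this), and $P_d$ has the parity of $d-1$, not the opposite parity, although your conclusion $\delta\equiv d-1 \pmod 2$ is right.
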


\begin{proof}
On the diagonal, (\ref{equivalent_BB}) gives $\det \BB(e^{-\theta\un}) \sim (d+1)\kappa_d^d\, \theta^{-d(d+2)}$, and $\theta^*_n \sim \tilde{\theta}_n$. We plug this estimate, Lemma~\ref{lemma_exponent}, and the expression of $C_d$ given in Proposition~\ref{prop_equivalent_monovariate} into (\ref{p_n_equivalent}), with $\tilde{\theta}_n = (\kappa_d/n)^{1/(d+1)}$; this gives $\alpha_d$ and $\beta_d$. The expansion of $F_d(n)$ is obtained by writing $\hat{\theta}_n = \tilde{\theta}_n(1 + \sum_{j \geq 1} e_j \tilde{\theta}_n^{2j})$ and solving $L_d'(\hat\theta_n) = -dn$ order by order. Since $p_{d,d-2} = 0$, the first correction is $e_1 = \frac{(d-2)\lambda_d}{d(d+1)\kappa_d}$ when $d \geq 4$, and $F_d(n) - Q_d(n^{1/(d+1)}) = -\frac{1}{2} f''(\tilde\theta_n)(\hat\theta_n - \tilde\theta_n)^2 + O(\tilde\theta_n^{6-d})$, which gives the leading term of $E_d$ and the value of $\gamma_4$.
\end{proof}

%%%%%%%%%%%%%%%%%%%%%%%%%%%%%%%%%%%%%%%%%%%%%%%%%%%

\section{Numerical considerations}\label{section_numerical_discu}

In this section, we make two numerical remarks, in order to clarify the behavior of our result when $n$ and $d$ grow large. 

\subsection{Discussion on \texorpdfstring{$I_{\text{crit}}(d,n)$}{I\_crit(d,n)}}
In general, without assuming that the non-trivial zeros of $\zeta$ are simple, $I_{\text{crit}}(d,n)$ can still be expressed as a sum over the zeros, but the term associated with a multiple zero is more complicated than in (\ref{I_crit_somme}). The first two zeros of the $\zeta$ function in the upper half of the critical strip are $r_1 = \frac{1}{2} + i\, 14.1347\ldots$ and $r_2 = \frac{1}{2} + i\, 21.0220\ldots$. Due to the exponential decay of the Gamma function along vertical lines, the term of (\ref{I_crit_somme}) associated with $r_1$ is about $10^4$ times larger than the term associated with $r_2$. \medskip 

The bounds on the density of the zeros given by Selberg \cite{Selberg:Riemann} suggest that the contribution of all the zeros other than $r_1$ and $\overline{r_1}$ to $I_{\text{crit}}(d,n)$ is negligible. Keeping only these two terms, we obtain the following numerical approximations in dimensions 2, 3, and 4:
\begin{equation}\label{numeric_I_crit}
\begin{aligned}[b]
    I_{\text{crit}}(2, n) &\approx n^{1/6}\Big(1.1310 {\scriptstyle\times} 10^{-10}\cos\left(4.7116\ln(0.6842\, n)\right)\\
    &\qquad - 1.7995{\scriptstyle\times}10^{-9}\sin\left(4.7116\ln(0.6842\,n)\right)\Big),\\
    I_{\text{crit}}(3, n) &\approx -n^{1/8}\Big(4.4775 {\scriptstyle\times} 10^{-10}\cos\left(3.5337\ln(0.2777\,n)\right)\\
    &\qquad + 5.3495 {\scriptstyle\times} 10^{-9}\sin\left(3.5337\ln(0.2777\,n)\right)\Big),\\
    I_{\text{crit}}(4, n) &\approx -n^{1/10}\Big(2.3568{\scriptstyle\times} 10^{-9}\cos\left(2.8269\ln(0.1305\,n)\right)\\
    &\qquad + 1.0669{\scriptstyle\times}10^{-8}\sin\left(2.8269\ln(0.1305\,n)\right)\Big).
\end{aligned}
\end{equation}
These approximations were obtained with \texttt{mpmath}; as a consistency check, we also computed $I_{\text{crit},d}(\theta)$ directly by numerical integration along the contour $\gamma$ (with $A = 0.1$), and obtained the same values as the sum of the residues, and as (\ref{numeric_I_crit}) up to the contribution of the zeros other than $r_1$, $\overline{r_1}$ (relative difference below $2\times 10^{-3}$ for $n = 10^6$ and $n = 10^{12}$, which is the order of the rounding of the coefficients).

To give an order of magnitude, $|I_{\text{crit}}(d,n)|$ is smaller than $4 \times 10^{-6}$ for all $n \leq 10^{20}$ and $d \in \{2,3,4\}$.\medskip

\subsection{About moving up in dimension}

For a rougher analysis of the logarithmic equivalent, that is, of the leading term of the exponential part, we can use the expansion $\zeta(d) = 1+ 2^{-d} + o(2^{-d})$ as $d $ grows large. We get
$$ \kappa_d^{\frac{1}{d+1}} = \left(\frac{2^{d-1}\zeta(d+1)}{\zeta(d)} \right)^{\frac{1}{d+1}} = 2 + O\left(\frac{1}{d}\right), \qquad\text{ as } d \rightarrow + \infty.$$
Therefore, in high dimension, the logarithm of $z_d(n)$ is close to $2(d+1)\, n^{\frac{d}{d+1}}$.\medskip

We wish to draw attention to the fact that \cite{Bureaux:polygons} and \cite{Barany:zonotopes} focus on polygonal lines (respectively zonotopes) starting at $\boldsymbol{0}$ and ending at a given point, whereas in this paper we enumerate the zonotopes inscribed in a hypercube. The hypercube $[-1, 1]^d$ can be split into $2^d$ hypercubes centered at the points $(\pm \frac{1}{2},\dots, \pm\frac{1}{2})$, and a zonotope inscribed in a hypercube can be seen as the Minkowski sum of zonotopes whose generators lie in each of the $2^{d-1}$ orthants with positive first coordinate. This short explanation, also given in \cite[Theorem 6.2]{Barany:zonotopes}, accounts for the additional factor $2^{d-1}$ in the leading term of the exponential part.

\subsection{Numerical validation}

The coefficients of $Zon_d$ can be computed exactly by expanding the product (\ref{partition_function}) truncated to the primitive vectors $\vv$ with $[\vv] \leq n\un$, which takes polynomial time in $n$ for fixed $d$. In dimension 2, the first values of $z_2(n)$, for $n = 0, 1, \dots, 12$, are
$$1,\ 3,\ 10,\ 34,\ 109,\ 331,\ 965,\ 2695,\ 7304,\ 19236,\ 49365,\ 123779,\ 303954.$$
Table~\ref{tab:validation} compares $z_2(n)$ with the estimate of Theorem~\ref{theorem_1}; the ratio tends to 1 slowly, its distance to 1 being roughly proportional to $n^{-1/3}$, which is consistent with relative corrections of order $\tilde{\theta}_n$.

\begin{table}[ht]
    \centering
    \begin{tabular}{|c|c|c|c|c|c|}
        \hline
        $n$ & 25 & 50 & 100 & 150 & 200 \\
        \hline
        $z_2(n) \big/ \alpha_2 n^{\beta_2} e^{Q_2(n^{1/3})}$ & 1.0613 & 1.0483 & 1.0365 & 1.0306 & 1.0269 \\
        \hline
    \end{tabular}
    \caption{Ratio between the exact number of zonotopes inscribed in $[0,n]^2$ and the estimate of Theorem~\ref{theorem_1} (the term $I_{\text{crit}}(2,n)$, smaller than $10^{-8}$ here, is neglected).}
    \label{tab:validation}
\end{table}

The same computation, keeping track of the number of generators, gives the exact mean and variance of the diameter. For $d=2$ and $n=120$, the ratio between the variance and the mean is $0.1915$, while Theorem~\ref{diameter_zono} predicts the limit $1 - \frac{1}{4} - \frac{2}{3\zeta(3)} = 0.1954\ldots$; for $d=3$ and $n=35$, the ratio is $0.1805$, for a predicted limit $1 - \frac{1}{8} - \frac{3}{4\zeta(4)} = 0.1820\ldots$. The constant $1-2^{-d}$ given by the unconditioned Boltzmann model (respectively $0.75$ and $0.875$) is thus clearly ruled out.

%%%%%%%%%%%%%%%%%%%%%%%%%%%%%%%%%%%%%%%%%%%%%%%%%%%

\section{Estimates of the moments of some parameters}\label{section_moments}

In this section, we establish the asymptotic behavior of the first two moments of two parameters of random lattice zonotopes. To do so, we add a variable $u$ (a marking variable, in the terminology of the symbolic method) which counts the parameter, and we study the coefficients of the resulting generating function for $u = e^{\eta}$, with $\eta$ real and close to $0$. This gives the moment generating function of the parameter, conditionally on the bounding box, and hence its moments.

We begin with a result similar to \cite[Lemma 4.3]{Barany:zonotopes}, which gives the mean number of generators of a zonotope contained in a given cone and ending at a given point, under the Boltzmann distribution. In the case of zonotopes inscribed in a hypercube, we compute the mean and the variance of the number of generators under the uniform distribution. The diameter of the graph of a zonotope is equal to its number of pairwise non-collinear generators \cite{Ziegler:lectures}, that is, to the size of the support of the associated function $\omega$. This gives Theorem~\ref{diameter_zono}.

\subsection{The tilted generating functions}

The generating function (\ref{partition_function}) has the structure of a partition function, and each factor $\left(1- \xx^{[\vv]}\right)^{-1} = \sum_{k \geq 0} \xx^{k[\vv]}$ accounts for the generator collinear to $\vv$: the $k$-th term corresponds to $\omega(\vv) = k$. Marking with the variable $u$ the number of generators of a zonotope turns each factor into $1 + u\sum_{k \geq 1}\xx^{k[\vv]}$. For $\eta \in \R$, we set
$$ Y_\eta(\xx) =  \prod_{ \vv\in \P_d^\star} \left( 1 + e^{\eta}\,\frac{\xx^{[\vv]}}{1 - \xx^{[\vv]}} \right), \qquad Y_\eta\left(e^{-\ttheta}\right) =  \prod_{ \vv\in \P_{d+}} \left( 1 + e^{\eta}\,\frac{e^{-\ttheta\cdot\vv}}{1 - e^{-\ttheta\cdot\vv}} \right)^{ 2^{d(\vv)-1}}, $$
so that $Y_0 = Zon_d$ and, denoting by $X_n$ the number of generators of a lattice zonotope drawn uniformly at random among those inscribed in $[0,n]^d$,
\begin{align}\label{mgf_generators}
\mathbb{E}\left[e^{\eta X_n}\right] = \frac{[\xx^{n\un}]\, Y_\eta(\xx)}{[\xx^{n\un}]\, Y_0(\xx)}.
\end{align}
Since $\log\left(1 + e^{\eta}\frac{q}{1-q}\right) = -\log(1-q) + \log\left(1-(1-e^{\eta})q\right) = \sum_{m \geq 1} \frac{1 - (1-e^{\eta})^m}{m}\, q^m$, the proof of Lemma~\ref{lemma_integral_form_gg} gives, for $|1-e^{\eta}| < 1$ and $c > d$,
\begin{align}\label{integral_tilted}
\log Y_\eta(e^{-\ttheta}) = \frac{1}{2 i \pi} \int _{c - i\infty} ^{ c+i \infty} \frac{D_\eta(s)}{\zeta(s)}\,\Gamma(s) F_d(s,\ttheta)\, ds, \qquad D_\eta(s) = \sum_{m \geq 1}\frac{1-(1-e^{\eta})^m}{m^{s+1}},
\end{align}
where $D_\eta$ is holomorphic on $\Re(s) > 0$ (and even for complex $\eta$ with $|1-e^\eta|<1$), and bounded on every half-plane $\Re(s) \geq c_0 > 0$, uniformly for $|1-e^{\eta}| \leq r < 1$. Note that $D_0(s) = \zeta(s+1)$.

\begin{lem}\label{lemma_uniform_tilted}
Let $\rho \in (0, \log 2)$. All the results of Sections~\ref{section_analysis} and~\ref{section_proof_main} on $Zon_d$ that are used in the proof of (\ref{p_n_equivalent}) hold for $Y_\eta$, uniformly for $\eta \in [-\rho,\rho]$, with $\zeta(d+1)$ replaced by $D_\eta(d)$, that is, $\kappa_d$ replaced by $\kappa_d(\eta) = 2^{d-1}D_\eta(d)/\zeta(d)$. In particular, as $n \rightarrow \infty$, uniformly for $\eta \in [-\rho,\rho]$,
$$[\xx^{n\un}]\, Y_\eta(\xx) \sim \frac{\exp\left(\ell_{d,\eta}(\theta^*_{n,\eta}) + dn\theta^*_{n,\eta}\right)}{\sqrt{(2\pi)^d\det \BB_\eta\left(e^{-\theta^*_{n,\eta}\un}\right)}},$$
where $\ell_{d,\eta}(\theta) = \log Y_\eta(e^{-\theta\un})$, $\BB_\eta$ is the matrix of second logarithmic derivatives of $Y_\eta$, and $\theta^*_{n,\eta}$ is the unique solution of $-\frac{1}{d}\ell_{d,\eta}'(\theta) = n$.
\end{lem}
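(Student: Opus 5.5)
The plan is to walk through the chain of results that leads to (\ref{p_n_equivalent}) and check, at each step, that $\zeta(s+1)$ can be replaced by $D_\eta(s)$ with all constants uniform in $\eta \in [-\rho,\rho]$. The first input is (\ref{integral_tilted}), the analogue of Lemma~\ref{lemma_integral_form_gg}. It is valid because $\rho<\log 2$ gives $|1-e^{\eta}| \leq r := \max(e^{\rho}-1, 1-e^{-\rho}) < 1$. Then $D_\eta$ is holomorphic on $\Re(s)>0$, bounded on $\Re(s)\geq c_0$ uniformly in $\eta$, and real and positive at $s=d$, since every coefficient $1-(1-e^\eta)^m$ lies in $(0,2)$.

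\textbf{Step 1 (derivatives).} In the proof of Lemma~\ref{lemma_derivativ}, replace $\zeta(s+1)$ by $D_\eta(s)$. The residue at $s=d$ becomes $D_\eta(d)A_{\kk}(\ttheta)/\zeta(d)$. The shifted integral on $\Re(s)=c'$ is bounded by $\sup_\eta\sup_{\Re s = c'}|D_\eta(s)/\zeta(s)|$ times the bound of Lemma~\ref{lemma_Barnes}, so the error is $O(|\ttheta|^{-\kk\cdot\un-c'})$ uniformly in $\eta$. Since $\kappa_d(\eta)$ is bounded above and below by positive constants on $[-\rho,\rho]$, the equivalents (\ref{equivalent_aa}) and (\ref{equivalent_BB}), and the one for $\CC$, hold with $\kappa_d(\eta)$, uniformly in $\eta$.

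\textbf{Step 2 (complex points).} For Lemma~\ref{lemma_complex_points}, the needed bound is $|\log Y_\eta(e^{-\boldsymbol{\sigma}})| \leq C\sum_{\vv} 2^{d(\vv)-1}\sum_m \frac{1+r^m}{m} e^{-m\Re(\boldsymbol{\sigma})\cdot\vv} \leq 2C\,\mathcal{L}_d(\Re\boldsymbol{\sigma})$, which follows from the expansion of $\log(1+e^\eta q/(1-q))$. Cauchy's estimates then apply as before. This gives condition (I) and Lemma~\ref{condition1} uniformly.

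\textbf{Step 3 (conditions (II)--(V)).} These require care, because the factor is no longer $(1-q)^{-1}$. I would use the identity
\[
\left|1+e^\eta\frac{q}{1-q}\right|=\frac{|1-(1-e^\eta)q|}{|1-q|}.
\]
The numerator ratio $|1-(1-e^\eta)q|/(1-(1-e^\eta)|q|)$ is at most $(1+r|q|)/(1-r|q|)$, but bounding it this way alone is too lossy. A cleaner route is to note that for $q=xe^{iy}$ with $x\in(0,1)$, the modulus is maximised at $y=0$. Then the log-ratio equals $-\sum_{\vv}2^{d(\vv)-2}\log\left(1+\frac{4x e^{\eta}\sin^2(y/2)}{(1-x)(1-x+e^\eta x)}\right)$ with $x=e^{-\ttheta\cdot\vv}$. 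This is the direct generalisation of the formula in Lemma~\ref{condition2}, since $|1-x+e^\eta x e^{iy}|^2 = (1-x+e^\eta x)^2 + \dots$ by the same computation. The argument of the logarithm is again at least $1+c\,x\sin^2(y/2)$ with $c=c(\rho)>0$, because $(1-x)(1-x+e^\eta x) \leq e^{\rho}$. The two regimes of Lemma~\ref{condition2} then go through with constants depending only on $\rho$: the first regime uses Steps 1 and 2, and the second uses the vectors $(i,1,0,\dots)$. The same formula gives (V), while (III) and (IV) follow from Step 1.

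\textbf{Step 4 (conclusion).} Applying \cite[Theorem 4]{Gittenberger:hadmi} needs uniformity in the parameter $\eta$. I would check this by inspecting its proof: every $o(1)$ there comes from the quantities controlled in Steps 1--3. Monotonicity of $-\ell_{d,\eta}'$ holds because it is a positive combination of decreasing functions of $\theta$, so $\theta^*_{n,\eta}$ exists and is unique. Then $\theta^*_{n,\eta}\sim(\kappa_d(\eta)/n)^{1/(d+1)}$ uniformly in $\eta$, and the exact saddle point kills the Gaussian factor, giving the stated equivalent. The main obstacle is Step 3, the modulus inequality for the tilted factor in condition (II), together with the bookkeeping for uniformity in the black-box theorem of Gittenberger and Mandlburger.
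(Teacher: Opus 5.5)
Your Steps~1, 2 and~4 follow the paper's proof essentially line by line. The genuine difference is Step~3.

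The paper never computes the modulus of the tilted factor. It normalizes the factor into the characteristic function of a distribution $(\pi_k)_{k\ge 0}$ on $\N$, and uses $1-|\chi(\varphi)|^2\ge 4\pi_1\pi_2\sin^2(\varphi/2)$ together with $\pi_1\pi_2\ge c_\rho q^3(1-q)^2$. You instead compute the modulus exactly.

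The identity you display is not correct, though. At $\eta=0$ it gives $4x\sin^2(y/2)/(1-x)$ instead of the $4x\sin^2(y/2)/(1-x)^2$ of Lemma~\ref{condition2}. The right identity is
\[
\left(1+e^{\eta}\frac{x}{1-x}\right)^{2}\Big/\left|1+e^{\eta}\frac{xe^{iy}}{1-xe^{iy}}\right|^{2}=1+\frac{4e^{\eta}x\,(1-x^2+e^{\eta}x^2)\,\sin^2(y/2)}{(1-x)^2\,\left|1-(1-e^{\eta})xe^{iy}\right|^{2}}.
\]
Your conclusion survives this correction. Using $e^{\eta}\ge e^{-\rho}$, $1-x^2+e^{\eta}x^2\ge e^{-\rho}$ and $|1-(1-e^{\eta})xe^{iy}|\le 1+r<2$, the right-hand side is at least $1+e^{-2\rho}x\sin^2(y/2)$. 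Then $\log(1+t)\ge t\log 2$ on $[0,1]$ bounds the log-ratio by $-e^{-2\rho}\log(2)\,U_{\ttheta,\aalpha}$.

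Once repaired, your route keeps the weight $e^{-\ttheta\cdot\vv}$ of the untilted case. The second regime of Lemma~\ref{condition2} then carries over verbatim with the full family $1\le i\le M$. The price is a computation specific to this particular factor.

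The paper's route is computation-free and applies to any factor proportional to a probability generating function with $\pi_1\pi_2>0$. However, its weight $q^3(1-q)^2$ degenerates for the short vectors $(i,1,0,\dots,0)$ with small $i$, where $1-q_{\vv_i}$ is of order $(i+1)\theta$. That is why the paper must restrict to $M/2\le i\le M$ and shrink $\varrho$.
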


\begin{proof}
We go through the proofs of Sections~\ref{section_analysis} and~\ref{section_proof_main}.

\emph{Lemma~\ref{lemma_derivativ}.} Its proof only uses the integral representation (\ref{equa_integral}), Lemma~\ref{lemma_Barnes}, and the boundedness of $\zeta(s+1)/\zeta(s)$ on the line $\Re(s) = c'$. By (\ref{integral_tilted}), the same proof applies to $Y_\eta$, with $\zeta(s+1)$ replaced by $D_\eta(s)$, uniformly in $\eta$.

\emph{Lemma~\ref{lemma_complex_points}.} Since $|1-(1-e^{\eta})^m| \leq 2$, the series expansion above gives $|\log Y_\eta(e^{-\boldsymbol{\tau}})| \leq 2\,\mathcal{L}_d(\Re(\boldsymbol{\tau}))$, and the proof of Lemma~\ref{lemma_complex_points} applies verbatim.

\emph{Conditions (I), (III), (IV).} They follow from the two previous points exactly as in Lemmas~\ref{condition1} and~\ref{condtion345}.

\emph{Conditions (II) and (V).} For $q \in (0,1)$ and $\eta \in \R$, the function
$$\varphi \mapsto \left(1 + e^{\eta}\frac{qe^{i\varphi}}{1-qe^{i\varphi}}\right)\Big/\left(1 + e^{\eta}\frac{q}{1-q}\right)$$
is the characteristic function of the probability distribution $(\pi_k)_{k \geq 0}$ on $\N$ given by $\pi_0 = \frac{1-q}{1-q+e^{\eta}q}$ and $\pi_k = \frac{e^{\eta}q^k(1-q)}{1-q+e^{\eta}q}$ for $k \geq 1$. For any probability distribution on $\N$ with characteristic function $\chi$,
$$1 - |\chi(\varphi)|^2 = \frac{1}{2}\sum_{k,l \geq 0}\pi_k\pi_l\left|e^{ik\varphi} - e^{il\varphi}\right|^2 \geq 4\,\pi_1\pi_2 \sin^2\left(\frac{\varphi}{2}\right),$$
and here $\pi_1\pi_2 \geq c_\rho\, q^3(1-q)^2$ for some $c_\rho>0$. Hence, with $q_{\vv} = e^{-\ttheta\cdot\vv}$,
$$\log \left(  \frac{|Y_\eta(e^{ - \ttheta + i \aalpha} )|}{Y_\eta(e^{ - \ttheta })} \right) \leq -2c_\rho \sum_{ \vv\in \P_{d+}} 2^{d(\vv)-1} q_{\vv}^3(1-q_{\vv})^2 \sin^2\left(\frac{\aalpha \cdot \vv}{2}\right).$$
This proves (V), and the second regime of the proof of Lemma~\ref{condition2} applies, with the family $\vv_i = (i,1,0,\dots,0)$ restricted to $M/2 \leq i \leq M$, for which $q_{\vv_i}$ and $1 - q_{\vv_i}$ are bounded from below (taking the constant $\varrho$ of that proof smaller, at least $M/4$ of these indices are good). The first regime only uses Taylor's formula, (I), and the previous points. This proves (II), uniformly in $\eta$.

\emph{Saddle point.} For each $\eta$, $-\frac{1}{d}\ell_{d,\eta}'$ is a decreasing bijection from $(0,+\infty)$ onto $(0,+\infty)$, which defines $\theta^*_{n,\eta}$. Since all the error terms in the proof of \cite[Theorem 4]{Gittenberger:hadmi} are controlled by the $o(1)$ terms in (I) and (II), which are uniform in $\eta$, the estimate of Proposition~\ref{definition_gittenberger} at $\ttheta = \theta^*_{n,\eta}\un$ holds uniformly in $\eta$.
\end{proof}

\subsection{Proof of Theorem~\ref{diameter_zono}}

\begin{proof}[Proof of Theorem~\ref{diameter_zono}]
Let $\Phi_n(\theta,\eta) = \ell_{d,\eta}(\theta) + dn\theta$ for $\theta>0$ and $\eta \in [-\rho,\rho]$, so that $\theta^*_{n,\eta}$ is the unique minimizer of $\Phi_n(\cdot,\eta)$, and set
$$\mathcal{F}_n(\eta) = \Phi_n(\theta^*_{n,\eta},\eta) - \frac{1}{2}\log\det\BB_\eta\left(e^{-\theta^*_{n,\eta}\un}\right).$$
By (\ref{mgf_generators}) and Lemma~\ref{lemma_uniform_tilted}, uniformly for $\eta \in [-\rho,\rho]$,
\begin{align}\label{mgf_asympt}
\log \mathbb{E}\left[e^{\eta X_n}\right] = \mathcal{F}_n(\eta) - \mathcal{F}_n(0) + o(1).
\end{align}
We now estimate the derivatives of $\mathcal{F}_n$; in what follows, $\theta = \theta^*_{n,\eta}$, and all the estimates are uniform in $\eta \in [-\rho,\rho]$. Write $c_{\vv} = 2^{d(\vv)-1}$, $q_{\vv} = e^{-\theta\,\un\cdot\vv}$, and $M_d = \frac{2^{d-1}}{\zeta(d)}$. By Proposition~\ref{proposition_eulerian} (applied with $\aalpha = \textbf{0}$ in each dimension $\delta \leq d$, and differentiated $j$ times with respect to $\theta$, which is allowed since the Mellin inversion integral converges absolutely together with its derivatives), for every fixed $j \geq 0$,
\begin{align}\label{moments_sums}
\sum_{\vv \in \P_{d+}} c_{\vv}\, (\un\cdot\vv)^j\, q_{\vv} \underset{\theta \rightarrow 0}{\sim} \frac{2^{d-1}}{\zeta(d)}\,\frac{(d+j-1)!}{(d-1)!}\,\theta^{-d-j},
\end{align}
and the same holds with $q_{\vv}$ replaced by $q_{\vv}^m$ (for fixed $m \geq 1$) if $\theta$ is replaced by $m\theta$ in the right-hand side. At $\eta = 0$, we have
\begin{align*}
\partial_\eta\Phi_n &= \sum_{\vv} c_{\vv}\, q_{\vv} \sim M_d\,\theta^{-d}, \\
\partial_\eta^2\Phi_n &= \sum_{\vv} c_{\vv}\,(q_{\vv} - q_{\vv}^2) \sim \left(1 - \frac{1}{2^d}\right)M_d\,\theta^{-d},\\
\partial_\theta\partial_\eta\Phi_n &= -\sum_{\vv} c_{\vv}\,(\un\cdot\vv)\,q_{\vv} \sim -d\,M_d\,\theta^{-d-1}, \\
\partial_\theta^2\Phi_n &= \ell_{d,0}''(\theta) \sim d(d+1)\,\kappa_d\,\theta^{-d-2},
\end{align*}
the last estimate following from Lemma~\ref{lemma_derivativ}. More generally, the expansion of $\log(1 + e^{\eta}\frac{q}{1-q})$ shows that, uniformly for $\eta \in [-\rho,\rho]$, $\partial_\theta^a\partial_\eta^b\Phi_n = O(\theta^{-d-a})$ for all $a \geq 0$ and $b \geq 1$, and $\partial_\theta^a\Phi_n = O(\theta^{-d-a})$ for $a \geq 2$, with $\partial_\theta^2\Phi_n \asymp \theta^{-d-2}$. By the implicit function theorem, $\frac{d\theta^*_{n,\eta}}{d\eta} = -\partial_\theta\partial_\eta\Phi_n/\partial_\theta^2\Phi_n = O(\theta)$, and the envelope theorem gives
$$\frac{d}{d\eta}\Phi_n(\theta^*_{n,\eta},\eta) = \partial_\eta\Phi_n, \qquad \frac{d^2}{d\eta^2}\Phi_n(\theta^*_{n,\eta},\eta) = \partial_\eta^2\Phi_n - \frac{(\partial_\theta\partial_\eta\Phi_n)^2}{\partial_\theta^2\Phi_n},$$
and differentiating once more, $\frac{d^3}{d\eta^3}\Phi_n(\theta^*_{n,\eta},\eta) = O(\theta^{-d})$. On the other hand, by Lemma~\ref{lemma_derivativ} for $Y_\eta$, which also holds, uniformly, for complex $\eta$ with $|\eta| \leq \rho$ by (\ref{integral_tilted}), $\log\det\BB_\eta(e^{-\theta\un}) = -d(d+2)\log\theta + h_n(\eta)$, where $h_n$ is bounded together with its first three derivatives, uniformly in $n$ (by Cauchy's estimates in the variable $\eta$); since $\frac{d}{d\eta}\log\theta^*_{n,\eta} = O(1)$, the derivatives of order $1$ to $3$ of $\frac{1}{2}\log\det\BB_\eta$ are $O(1)$. Altogether, with $\theta = \theta^*_{n,0}$ and using $\frac{M_d}{\kappa_d} = \frac{1}{\zeta(d+1)}$,
\begin{align*}
\mathcal{F}_n'(0) &= M_d\,\theta^{-d}(1+o(1)), \\
\mathcal{F}_n''(0) &= \left(1 - \frac{1}{2^d}\right)M_d\,\theta^{-d} - \frac{d^2M_d^2\,\theta^{-2d-2}}{d(d+1)\kappa_d\,\theta^{-d-2}} + o(\theta^{-d}) \\
&= \left(1 - \frac{1}{2^{d}} - \frac{d}{(d+1)\,\zeta(d+1)}\right)M_d\,\theta^{-d}\,(1+o(1)),
\end{align*}
and $\mathcal{F}_n'''(\eta) = O(\theta^{-d})$ uniformly for $\eta \in [-\rho,\rho]$. The constant $\sigma_d^2 = 1 - 2^{-d} - \frac{d}{(d+1)\zeta(d+1)}$ is positive for all $d \geq 2$: indeed $\zeta(d+1) > 1$, so that $\sigma_d^2 > \frac{1}{d+1} - \frac{1}{2^d} > 0$.

Let $s_n^2 = \mathcal{F}_n''(0) \asymp \theta^{-d}$ and $t \in \R$. Applying (\ref{mgf_asympt}) with $\eta = t/s_n \rightarrow 0$ and Taylor's formula,
$$\log \mathbb{E}\left[e^{t(X_n - \mathcal{F}_n'(0))/s_n}\right] = \frac{t^2}{2} + O\left(\frac{|t|^3\,\theta^{-d}}{s_n^3}\right) + o(1) \underset{n \rightarrow \infty}{\longrightarrow} \frac{t^2}{2}.$$
Hence $(X_n - \mathcal{F}_n'(0))/s_n$ converges in distribution to the standard normal distribution, and, since its moment generating function converges on a neighbourhood of $0$, all its moments converge to those of the standard normal distribution. In particular, $\mathbb{E}[X_n] = \mathcal{F}_n'(0) + o(s_n) \sim M_d\,\theta^{-d}$ and $\mathrm{Var}(X_n) \sim s_n^2$. Finally, $\theta = \theta^*_{n,0} \sim \tilde{\theta}_n = (\kappa_d/n)^{1/(d+1)}$, and $M_d\,\tilde{\theta}_n^{-d} = \frac{\kappa_d^{1/(d+1)}}{\zeta(d+1)}\, n^{\frac{d}{d+1}}$. Since the diameter of the graph of a zonotope is its number of generators, this proves Theorem~\ref{diameter_zono}.
\end{proof}

The proof shows in addition that the diameter satisfies a central limit theorem: $(X_n - \mu^n_{\text{diam}})/\sigma^n_{\text{diam}}$ converges in distribution to the standard normal distribution.

\begin{rem}
The first two terms of the constant $\sigma_d^2$ correspond to the variance of the number of generators under the (unconditioned) Boltzmann distribution, while the third one accounts for the conditioning on the bounding box. For $d=1$, the same computation applied to integer partitions gives the constant $\frac{1}{2} - \frac{3}{\pi^2}$ for the variance of the number of distinct part sizes, in agreement with the result of Goh and Schmutz \cite{GohSchmutz}. Numerical values for $d \in \{2,3\}$ are given in Section~\ref{section_numerical_discu}.
\end{rem}

\subsection{Number of occurrences of a generator}

\begin{prop}\label{occurences}
Let $\vv_0$ be a primitive vector, and let $W_n$ be the number of occurrences $\omega(\vv_0)$ of $\vv_0$ in a lattice zonotope drawn uniformly at random among those inscribed in $[0,n]^d$. Then $\kappa_d^{1/(d+1)}\|\vv_0\|_1\, n^{-1/(d+1)}\,W_n$ converges in distribution, with convergence of all moments, to the exponential distribution of parameter 1. In particular, its mean $\mu_{\text{occ}}^n$ and its variance $(\sigma^2)^n_{\text{occ}}$ satisfy
$$\mu_{\text{occ}}^n \underset{n \rightarrow + \infty}{\sim} \frac{n^{\frac{1}{d+1}}}{\kappa_d^{\frac{1}{d+1}}\, \|\vv_0\|_1}, \qquad (\sigma^2)^n_{\text{occ}}  \underset{n \rightarrow + \infty}{\sim}\left(\frac{n^{\frac{1}{d+1}}}{\kappa_d^{\frac{1}{d+1}}\, \| \vv_0\|_1} \right)^2. $$
\end{prop}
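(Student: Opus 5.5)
We follow the same scheme as in the proof of Theorem~\ref{diameter_zono}, with a marking variable that records $\omega(\vv_0)$ instead of the number of generators. We may assume $\vv_0 \in \P_d^\star$, so that $[\vv_0]$ has $L^1$ norm $k_0 = \|\vv_0\|_1$. In the product (\ref{partition_function}), only the factor associated with $\vv_0$ changes: $(1-\xx^{[\vv_0]})^{-1}$ becomes $(1-e^{\eta}\xx^{[\vv_0]})^{-1}$. So, for real $\eta<0$ (and, more generally, $\eta$ with $e^{\eta}\,\xx^{[\vv_0]}$ inside the unit disk),
$$\mathbb{E}\left[e^{\eta W_n}\right] = \frac{[\xx^{n\un}]\, Zon_d(\xx)\,G_\eta(\xx)}{[\xx^{n\un}]\, Zon_d(\xx)}, \qquad G_\eta(\xx) = \frac{1-\xx^{[\vv_0]}}{1-e^{\eta}\xx^{[\vv_0]}}.$$
The natural scale is $\eta = t\,\theta^*_n$ with $t<1$ fixed. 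At the saddle point $\xx = e^{-\theta^*_n\un}$ we get
$$G_\eta = \frac{1-e^{-k_0\theta^*_n}}{1-e^{t\theta^*_n - k_0\theta^*_n}} \longrightarrow \frac{k_0}{k_0 - t},$$
which is the Laplace transform of an exponential variable of mean $1/k_0$, evaluated at $t$. Since $\theta^*_n \sim (\kappa_d/n)^{1/(d+1)}$, this says that $\theta^*_n W_n$ converges to an exponential law of mean $1/k_0$. Equivalently, $\kappa_d^{1/(d+1)} k_0\, n^{-1/(d+1)}\, W_n$ converges to the exponential distribution of parameter $1$.

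\textbf{Key step: the factor $G_\eta$ passes through the saddle-point integral.} We must show that
$$[\xx^{n\un}]\,Zon_d\,G_\eta = G_\eta\left(e^{-\theta^*_n\un}\right)\,[\xx^{n\un}]\,Zon_d\,(1+o(1)),$$
uniformly for $t$ in a compact subset of $(-\infty,1)$. We redo the Cauchy integral of Proposition~\ref{definition_gittenberger} at $\ttheta = \theta^*_n\un$, splitting the domain into $\Delta(\ttheta)$ and its complement.
\begin{itemize}
\item On $\Delta(\ttheta)$, $|\aalpha| \le \theta^\beta$ with $\beta>1$. Hence $\aalpha\cdot\vv_0 = o(\theta)$, and $G_\eta(e^{-\ttheta+i\aalpha}) = G_\eta(e^{-\ttheta})(1+o(1))$ uniformly, because the denominator $1-e^{\eta-\ttheta\cdot\vv_0+i\aalpha\cdot\vv_0}$ has modulus of order $(k_0-t)\theta$.
\item Off $\Delta(\ttheta)$, we bound crudely $|G_\eta| \le 2/|1-e^{(t-k_0)\theta}| = O(\theta^{-1})$. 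Lemma~\ref{condition2} gives a decay of $|Zon_d|$ faster than any power of $\theta$, which absorbs this polynomial factor.
\end{itemize}
With both pieces, the Gaussian approximation proceeds exactly as in \cite[Theorem 4]{Gittenberger:hadmi}, and we do not need to move the saddle point: the perturbation $\log G_\eta$ is $O(1)$ and its gradient is $O(\theta^{-1})$, which is negligible against $\BB^{1/2} \asymp \theta^{-(d+2)/2}$.

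\textbf{Conclusion.} This gives $\mathbb{E}[e^{t\,\theta^*_n W_n}] \to k_0/(k_0-t)$ for all $t<1$, hence for $t$ in a neighbourhood of $0$ (we need $t<k_0$, and $k_0 \ge 1$). Convergence of the moment generating function on a neighbourhood of $0$ implies convergence in distribution and convergence of all moments. The mean and variance then follow from those of the exponential law: mean $1/k_0$ and variance $1/k_0^2$ for $\theta^*_n W_n$, rescaled by $\theta^*_n \sim \tilde\theta_n$.

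\textbf{Main obstacle.} The delicate point is the uniformity of the Gaussian approximation when multiplied by $G_\eta$. That factor has a pole approaching the contour as $\theta\to 0$: at distance of order $(k_0-t)\theta$, it is only $O(\theta^{-1})$ large. So we must check that Lemma~\ref{condition2} genuinely provides superpolynomial decay in both regimes. It does: one regime gives $\exp(-c\theta^{-(d+2-2\beta)})$, the other $\exp(-c/\theta)$. We must also check that the error terms in \cite{Gittenberger:hadmi} tolerate a slowly varying bounded multiplicative factor on $\Delta(\ttheta)$. If this adaptation proves awkward, the fallback is to write $G_\eta = (1-q)\sum_{m\ge0}e^{m\eta}q^m$ with $q = \xx^{[\vv_0]}$. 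Then $\mathbb{P}(W_n\ge m) = p_{n\un - m[\vv_0]}/p_{n\un}$ exactly, and Proposition~\ref{definition_gittenberger} applied at the fixed saddle point $\theta^*_n\un$ gives this ratio as $e^{-m k_0\theta^*_n}(1+o(1))$ uniformly for $m = O(\theta^{-1})$. Indeed, the shift $m[\vv_0]$ is $O(\theta^{-1}) = o(\theta^{-(d+2)/2})$, so the Gaussian factor stays $1+o(1)$. This tail estimate, together with the exponential bound from Lemma~\ref{condition2} on larger $m$, directly yields the exponential limit.
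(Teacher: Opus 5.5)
Your proposal is correct and reaches the paper's limit $\mathbb{E}[e^{t\,k_0\theta^*_n W_n}]\to 1/(1-t)$, but by a somewhat different route.

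\textbf{The paper's route.} It treats $Z_\eta = Zon_d\,G_\eta$ as a new H-admissible function. It asserts that the estimate of Proposition~\ref{definition_gittenberger} carries over to $Z_\eta$ at its own saddle point $\theta^*_{n,\eta}$. It then compares $\mathcal{G}_n(\eta)$ with $\mathcal{G}_n(0)$ through the envelope theorem, as in the diameter proof. The shift of the saddle point and the change of $\log\det\BB$ are shown to contribute only $o(1)$.

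\textbf{Your route.} You keep the unperturbed saddle point $\theta^*_n\un$, where $\ba = n\un$ exactly, and factor $G_\eta$ out of the Cauchy integral. On $\Delta(\ttheta)$, $G_\eta$ is constant up to a factor $1+O(\theta^{\beta-1})$. This holds because the pole stays at distance $\asymp (k_0-t)\theta$ from the torus, while $|\aalpha\cdot\vv_0| = O(\theta^\beta)$ with $\beta>1$. Off $\Delta(\ttheta)$, its $O(\theta^{-1})$ size is absorbed by the super-polynomial decay obtained in the proof of Lemma~\ref{condition2}. The paper uses this same absorption.

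\textbf{Comparison.} Your argument is more economical. It requires no re-verification of conditions (I)--(V) for $Z_\eta$ and no control of $\log\det\BB_\eta$. It also makes transparent why the saddle point need not move here: the perturbation $\log G_\eta$ is $O(1)$, whereas in the diameter case it is of order $\theta^{-d}$. The paper's version has the advantage of reusing verbatim the $\mathcal{F}_n$ machinery from the proof of Theorem~\ref{diameter_zono}.

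\textbf{Your fallback.} It rests on the exact identity $\mathbb{P}(W_n\ge m) = p_{n\un-m[\vv_0]}/p_{n\un}$ and on the uniformity in $\nn$ of Proposition~\ref{definition_gittenberger}, and it is even more elementary. One correction: the tail control for large $m$ does not come from Lemma~\ref{condition2}. It comes from the same uniform estimate. The bracket in Proposition~\ref{definition_gittenberger} is at most $1+o(1)$ for every $\nn$, and equals $1+o(1)$ at $\nn=n\un$. Hence $\mathbb{P}(W_n\ge m)\le (1+o(1))\,e^{-mk_0\theta^*_n}$ for all $m$, which is what gives uniform integrability and thus convergence of moments along that route.
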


\begin{proof}
By symmetry, we can assume that $\vv_0 \in \P_{d+}$. Marking the occurrences of $\vv_0$ with $u = e^\eta$ replaces the factor $(1 - \xx^{\vv_0})^{-1}$ by $(1 - e^{\eta}\xx^{\vv_0})^{-1}$; let
$$Z_\eta(\xx) = Zon_d(\xx)\,\frac{1 - \xx^{\vv_0}}{1 - e^{\eta}\,\xx^{\vv_0}}, \qquad \mathbb{E}\left[e^{\eta W_n}\right] = \frac{[\xx^{n\un}]\, Z_\eta(\xx)}{[\xx^{n\un}]\, Zon_d(\xx)}.$$
Fix $t \in (-1,1)$ and let $\eta = t\,\theta\,\|\vv_0\|_1$ with $\theta = \theta^*_{n,0}$, so that $y = e^{-\theta\|\vv_0\|_1}$ satisfies $e^{\eta}y \leq e^{-(1-|t|)\theta\|\vv_0\|_1}$. The additional factor $\frac{1-y}{1-e^{\eta}y}$ and its logarithmic derivatives are of lower order: its contributions to $\ba$, $\BB$, and $\CC$ are $O(\theta^{-1})$, $O(\theta^{-2})$, and $O(\theta^{-3})$; for complex arguments its modulus is at most $\frac{|1-\xx^{\vv_0}|}{1 - e^{\eta}y} \leq \frac{2}{1-e^\eta y} = O(\theta^{-1})$ times its value at $\xx = e^{-\theta\un}$, which is absorbed by the super-polynomial decay in (II). Hence, as in Lemma~\ref{lemma_uniform_tilted}, the estimate of Proposition~\ref{definition_gittenberger} holds for $Z_\eta$, and, as in the previous proof, $\log\mathbb{E}[e^{\eta W_n}] = \mathcal{G}_n(\eta) - \mathcal{G}_n(0) + o(1)$, where $\mathcal{G}_n$ is defined as $\mathcal{F}_n$ with $Y_\eta$ replaced by $Z_\eta$. The envelope theorem gives
$$\mathcal{G}_n(\eta) - \mathcal{G}_n(0) = \log\frac{1-y}{1-e^{\eta}y} + O\left(\eta^2\,\theta^{-4}\,\theta^{d+2}\right) + O(\eta) = \log\frac{1}{1-t} + o(1),$$
since the coupling term $\partial_\theta\partial_\eta\Phi_n$ is now of order $\theta^{-2}$. Therefore $\mathbb{E}[e^{t\,\theta\,\|\vv_0\|_1 W_n}] \rightarrow \frac{1}{1-t}$ for all $t \in (-1,1)$, which is the moment generating function of the exponential distribution of parameter 1. This proves the convergence in distribution and of all moments, and the proposition follows from $\theta = \theta^*_{n,0} \sim (\kappa_d/n)^{1/(d+1)}$.
\end{proof}

%%%%%%%%%%%%%%%%%%%%%%%%%%%%%%%%%%%%%%%%%%%%%%%%%%%

\section*{Acknowledgements}
The authors wish to thank Philippe Marchal for his support and for his valuable comments, remarks, and corrections on the first version of this paper.

\end{document}